\newtheorem{theorem}{Theorem}[section]
\newtheorem{proposition}[theorem]{Proposition}
\newtheorem{corollary}[theorem]{Corollary}
\newtheorem{definition}[theorem]{Definition}
\newtheorem{remark}[theorem]{Remark}
\numberwithin{equation}{section}
\newcommand{\ali}{\arraycolsep1pt\begin{array}{rl}}  
\newcommand{\eali}{\end{array}}  
\newcommand{\RR}{{\boldsymbol{R}}}
\newcommand{\ZZ}{{\boldsymbol{Z}}}
\newcommand{\lra}{\longrightarrow}
\newcommand{\DS}{\displaystyle}
\newcommand{\op}{\mathrm}
\newcommand{\bd}{\partial}
\newcommand{\pa}{\partial}
\newcommand{\ssm}{\smallsetminus}
\newcommand{\vect}{\overrightarrow}
\newcommand{\abstracttext}{We study the shapes of compact connected 3-manifolds with connected smooth boundary in the 3-dimensional Euclidean space $\boldsymbol{R}^3$. We call them bounded domains. Since compact connected surfaces in $\boldsymbol{R}^3$ bound unique bounded domains, the objects are the same as compact connected surfaces in $\boldsymbol{R}^3$. To understand their shapes, we use the Morse height functions $F: M\longrightarrow \boldsymbol{R}$ which are the orthogonal projections from the bounded domains $M$ to lines, and their Reeb graphs $\mathcal{R}_F$ and $\mathcal{R}_{F|\partial M}$ which are obtained by identifying connected components of level sets of maps to points. We introduce the weighted Reeb graphs $\mathcal{R}_F^w$ and the weighted indexed Reeb graphs $\mathcal{R}_F^{wi}$ which give certain good understanding of the bounded domains $M$. We investigate whether a bounded domain admits a simple Morse height function $F:M\lra \RR$, for example, with the weighted Reeb graphs $\mathcal{R}_F^w$ with small weight. We show that if the weights are less than 2. $M$ can be deformed by isotopy to an embedded handle body. The original question which lead us to investigate bounded domains is the following question: \lq\lq{}Can the domain $M$ be isotoped so that, for every point of the boundary $\partial M$, there is a ray from the point which intersects the domain $M$ only at the end point ?\rq\rq\ In other words, \lq\lq{}Can $M$ be isotoped to $\iota(M)$ so that every point of $\bd \iota (M)$ is visible from the infinity ?\rq\rq\ Under the minNCP hypothesis, we show that if a bounded domain $M$ can be isotoped to $\iota(M)$ so that every point of the boundary is visible from the infinity, then $M$ is an embedded handlebody, that is, $M$ is a regular neigtborhood of a spacial graph.  Here the minNCP hypothesis asserts that, if $M$ is isotopic to a visible $\iota(M)$, $\iota(M)$ can be taken so that $z:\iota(M)\lra \RR$ is a Morse height function with minimum number of critical points in the isotopy class of the embedding $M\subset \RR^3$.}
\begin{document}

\title{Bounded domains in the 3-dimensional space}
\author{Takashi Tsuboi}
\address{Takashi Tsuboi, University of Tokyo, Meguro-ku Komaba 3-8-1, Tokyo 153-8914, Japan. }
\email{tsuboi@ms.u-tokyo.ac.jp}
%
%
\subjclass{
57K30, 
57M15, 
05C10, 
57K10, 
57M15
}
\keywords{Bounded domains, the 3-dimensional space, oriented surfaces, Morse height functions, Reeb graphs, Visibility}

\maketitle

\begin{abstract}
\abstracttext
\end{abstract}

\section*{Introduction}
\label{sec:0}

In this article, we begin with giving a survey on the shapes of bounded domains in the 3-dimensional Euclidean space. 

As we feel living in the space-time and things are usually thought as 3-dimensional objects, our data on such 3-dimensional objects may be given as compact 3-dimensional submanifolds with smooth boundary in the 3-dimensional Euclidean space which we call bounded domains.
Three-dimensional manifold theory was developed considerably in the 20th century and we know already that there are very strong restriction on such objects. It is an interesting task now to review to what extent we understand compact 3-bodies. Hence we first planned to explain several basic facts known for  bounded domains in the 3-dimensional Euclidean space to those who are not specialized in mathematics, and to give several applications of such results. However, in the preparation of this article we found many interesting problems on bounded domains and these problems seems to have not yet been settled. Thus this article contains several problems which might interest mathematicians working in low dimensional topology.

To describe  bounded domains $M$ in 3-dimensional Euclidean space $\RR^3$, we look at height functions $F:M\lra \RR$ which are the orthogonal projections to lines and whose restrictions to their boundaries $\bd M$ are the Morse functions in Section \ref{sec:Morse_height_function}.
 We call such $F:M\lra \RR$ the Morse height functions.

For a Morse height function $F:M\lra \RR$, we consider the Reeb graphs 
$\mathcal{R}_{F|\bd M}$ and $\mathcal{R}_F$ which are obtained from $\bd M$ and $M$ by identifying each connected component of a fiber of $F|\bd M$ and $F$ to a point. Then we have the map $\mathcal{R}_{F|\bd M}\lra \mathcal{R}_F$ which contains information on the shape of $M$. From this map we define the weighted Reeb graph $\mathcal{R}^w_F$ and the weighted indexed Reeb graph $\mathcal{R}^{wi}_F$ which might be more convenient to describe the shapes of bounded domains in Section \ref{sec:weighted_Reeb_Graph}.  We show that bounded domains with the Morse height functions whose the weighted Reeb graphs have small weights has simple structure. Namely, if the weights of the weighted Reeb graph $\mathcal{R}^w_F$ are less than 2, $M$ is diffeomorphic to handlebodies. 

This consideration lead us to reply to the following question: 
\lq\lq{}Can the domain $M$ be isotoped to $\iota(M)$ so that, for every point of the boundary $\partial \iota(M)$, there is a ray from the point which intersects the domain $\iota(M)$ only at the end point ?\rq\rq\ In other words, \lq\lq{}Can $M$ be isotoped to $\iota(M)$ so that every point of $\bd \iota (M)$ is visible from the infinity ?\rq\rq\ Under the minNCP hypothesis, 
we show in Section \ref{sec:visibility}, that if a bounded domain $M$ can be isotoped to $\iota(M)$ so that every point of the boundary is visible from the infinity, then $M$ is an embedded handlebody, that is, $M$ is a regular neigtborhood of a spacial graph.  Here the minNCP hypothesis asserts that, if $M$ is isotopic to a visible $\iota(M)$, $\iota(M)$ can be taken so that $z:\iota(M)\lra \RR$ is a Morse height function with minimum number of critical points in the isotopy class of the embedding $M\subset \RR^3$.

Osamu Saeki informed us that the Morse functions on a closed 2-dimensional manifold which bounds a 3-dimensional manifolds and their Reeb graphs are treated by Carlos Curley \cite{Curley} and by Ken-ichi Iwamoto \cite{Iwamoto}. They studied whether a Morse function on a closed 2-dimensional manifold can extend a nonsingular function on the 3-dimensional manifold. Their papers contain a number of illustrating figures. Note that the notations there is a little different from ours.

\section{Bounded domains in the 3-dimensional Euclidean space}
\label{sec:1}

First we define the object which we are interested in.

\begin{definition}[Bounded domains]
A bounded domain $M$ in the 3-dimensional Euclidean space $\RR^3$
is a compact connected 3-dimensional manifold with smooth connected boundary embedded in $\RR^3$. 
\end{definition}

For a bounded domain $M$, a point $p\in M$ whose $\varepsilon$-neighborhood $\mathcal{N}_\varepsilon(p)$ for some positive real number $\varepsilon$ is contained in $M$ is called an {\it interior point} and the interior points of $M$ form an open subset of $\RR^3$, $\op{Int}(M)$, the interior of $M$. 
At a point $p$ on the boundary $\bd M=M\ssm \op{Int}(M)$ of a compact 3-dimensional manifold $M$, we have a $C^\infty$ function $f$ defined in an $\varepsilon$-neighborhood $\mathcal{N}_\varepsilon(p)$ of $p$, such that 
$$\ali f(p)&=0\,,\\\op{grad}(f)_{(p)}&=\left(\dfrac{\pa f}{\pa x}, \dfrac{\pa f}{\pa y},\dfrac{\pa f}{\pa z}\right)^T_{(p)}\neq\vect{0}\qquad\text{and}\\[4mm]
 M\cap \mathcal{N}_\varepsilon(p)&=\left\{x\in \mathcal{N}_\varepsilon(p)\ \big|\ f(x)\leqq0\right\}.\eali$$
In fact, for a bounded domain $M$, there is a $C^\infty$ function $f$ defined on $\RR^3$, such that  $\op{grad}(f)_{(p)}\neq \vect{0}$ for $p\in f^{-1}(0)$, 
$$ 
M=\{p\in \RR^3\ \big|\ f(p)\leqq 0\} \text{\ \ and\ \ } 
\bd M=\{p\in \RR^3\ \big|\ f(p)= 0\}.$$ 
For a point $p$ on the boundary $\bd M$, $\op{grad}(f)_{(p)}$ is an outward normal vector at $p\in \bd M$. 

For a bounded domain $M$, the boundary surface $S=\bd M=M\ssm \op{Int}(M)$ is a smooth 2-dimensional manifold. Since at any boundary point $p$ we have the normal vector $\op{grad}(f)_{(p)}$ to the boundary surface $S$ pointing outwards, the boundary surface $S$ is oriented.
It is well-known that compact connected oriented surfaces $S$ are classified by the Euler characteristic number $\chi(S)$ which is expressed by the genus $g(S)$ of $S$ as $\chi(S)=2-2g(S)$ and $\chi(S)$ takes the values $2$, $0$, $-2$, $\dots$, the even integers less than or equal to 2. The surface is diffeomorphic to the 2-sphere $S^2$, the 2-torus $T^2$, the oriented surface $\varSigma_2$, $\dots$, accordingly.

By the way, a closed connected surface in $\RR^3$ is always orientable. 
For, if there were a non-orientable embedded surface $S$, we can find a closed path $c:[0,1]\lra S$ on the surface $S$ such that, for the curve $c([0,1])$ continuously pushed out from $S$ to one side of $S$, $c(0)$ and $c(1)$ are on the different side. Joining these two points, we obtain a closed curve $C$  intersecting the surface transversely only once, by the intersection theory with $\ZZ/2\ZZ$ coefficients, existence of such an intersection of a closed surface and a closed curve implies that the third homology group of the total space $\RR^3$ is nontrivial. For, in terms of the cup product $\cup$, $H^2(S;\ZZ/2\ZZ)\cong \ZZ/2\ZZ$, $H^1(C;\ZZ/2\ZZ)\cong \ZZ/2\ZZ$ and the only one intersection point shows that
$$\cup :H^2(S;\ZZ/2\ZZ)\times H^1(C;\ZZ/2\ZZ) \lra H^3(\RR^3;\ZZ/2\ZZ)$$
is the nontrivial map. Since $H^3(\RR^3;\ZZ/2\ZZ)= 0$, we have a contradiction.

A closed connected surface $S$ divides the total space $\RR^3$ into two domains, one is bounded and the other is not bounded. This fact can be shown by the same reasoning as the previous paragraph. For, if it does not divide $\RR^3$, the complement $\RR^3\ssm S$ is path-connected and
two points near a point of the surface $S$ are conneced by a path. Joining the two point we obtain a closed curve $C$ intersecting $S$ once, which yields a contradiction. Note that $S$ divides the total space $\RR^3$ into at most two components. 
Since $S$ is compact, $S$ is contained in a ball of large radius, one of the components contains the exterior of the ball, the other component is contained in the ball and thus bounded. 

If we have a finite number of compact connected surfaces in $\RR^3$ which are disjoint, by assigning the domain bounded by the surface to each surface, and we will have the well-defined inclusion relation on the bounded domains and we have the partial order of the connected surfaces. 
The boundary of a compact connected 3-manifold in the Euclidean 3-space $\RR^3$ can be non-connected. But its boundary surfaces 
has a simple partial order. One of the boundary surfaces is the exterior surface which bounds the domain containing all other surfaces and the domains bounded by the others are disjoint.
Thus it is important to understand compact connected 3-manifolds with
\textit{connected} boundary and it is equivalent to understand how a connected orientable surface $S$ embeds in $\RR^3$.

To look at surfaces in the Euclidian 3-space, it is better to think the surface is embedded in $S^3$,  the one-point compactification $\RR^3\cup\{\infty\}$ of $\RR^3$ first, and then think about the position of the point $\infty$.
Here $$S^3=\{(x_0,x_1,x_2,x_3)\in \RR^4\ \big| \ x_0{}^2+x_1{}^2+x_2{}^2+x_3{}^2=1\}.$$  There is the stereographic projection from the south pole $(-1,0,0,0)\in S^3$ which maps 
$$S^3\ni (x_0,x_1,x_2,x_3)\longmapsto \big( \dfrac{x_1}{x_0+1},\dfrac{x_2}{x_0+1},\dfrac{x_3}{x_0+1}\big)\in \RR^3,$$
where $\infty$ corresponds to the south pole $(-1,0,0,0)\in S^3$.

\section{Surfaces in the 3-dimensional sphere}
\label{sec:2}

There are famous important theorems on surfaces in the 3-dimensional sphere. 

\begin{theorem}[Sch\"oflies problem, Alexander theorem\ (\cite{Alexander})]\label{th:Alexander}
A smooth 2-sphere in the 3-dimensional sphere $S^3$ divides $S^3$ into two domains both of which are  3-disks $D^3$.
\end{theorem}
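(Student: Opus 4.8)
The plan is to prove the smooth Schoenflies theorem in dimension $3$ (Alexander's theorem) by a Morse-theoretic slicing argument, reducing the global statement to a one-parameter family of essentially $1$-dimensional Jordan curve statements. First I would fix a smooth embedded $2$-sphere $S\subset S^3$. Using the stereographic projection introduced above, it is harmless to work in $\RR^3$ and assume $S\subset \RR^3$ is a smooth compact surface diffeomorphic to $S^2$; by the discussion in Section~\ref{sec:1} it separates $\RR^3$ into a bounded domain $M$ and an unbounded one, and it suffices to show $M\cong D^3$, since then filling back the point at infinity shows the complementary domain in $S^3$ is also a $3$-disk. After a small isotopy (a generic rotation of coordinates), I may assume the height function $z:S\lra \RR$ is a Morse function, so that it has finitely many critical points, all nondegenerate, with distinct critical values $c_1<\dots<c_k$.

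The core of the argument is an induction on the number of critical points of $z|_S$. If $z|_S$ has exactly two critical points (one maximum, one minimum), then every nonempty level set $S\cap\{z=t\}$ is a single smooth circle, and one shows directly that $M$ is obtained by sweeping out a family of disks (each $\{z=t\}\cap M$ being the planar Jordan domain bounded by that circle, by the classical Jordan--Sch\"onflies theorem in the plane), hence $M$ is diffeomorphic to $D^2\times[0,1]\cong D^3$. For the inductive step, I take a critical value $c_j$ of index-type that can be \emph{cancelled locally}: near a saddle the level sets change by a band move, and one locates an innermost circle in some regular level just above a minimum or just below a maximum that bounds a disk in $S$ on one side; this disk, together with a disk pushed slightly off it into $\RR^3\ssm S$ (which exists by an innermost-disk / irreducibility-type argument), can be used to surger $S$ along a $2$-sphere component, reducing the number of critical points by two while not changing the ambient diffeomorphism type of the bounded region filled in. Iterating down to two critical points and reassembling the surgeries by isotopy extension gives $M\cong D^3$.

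The main obstacle — and the place where real care is needed — is precisely the step that produces the embedded disk off of $S$ used to perform the surgery and guarantees that the cancellation of a pair of critical points can be realized by an ambient isotopy of $S$ in $\RR^3$ rather than merely an abstract handle cancellation. In the smooth category this is where Alexander's original cone construction (or, in modern treatments, an innermost-circle argument together with the fact that a smoothly embedded $2$-sphere in $\RR^3$ bounds a region with no essential spheres inside) does the work; one must check that the innermost level circle bounds a disk on the \emph{inside} of $M$ as well, so the surgery genuinely simplifies $M$. I would present this via the standard slicing proof: track the sequence of planar regions $M\cap\{z=t\}$, note each is a disjoint union of Jordan domains by the planar Schoenflies theorem, and show the combinatorics of how these merge and split across critical values is exactly that of attaching trivial $0$- and $1$-handles to a $3$-disk, so that the final manifold is $D^3$. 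Filling in the point at infinity then completes the proof that both sides in $S^3$ are $3$-disks.
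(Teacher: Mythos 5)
The paper itself gives no proof of Theorem \ref{th:Alexander}: it is quoted as Alexander's classical result with the reference \cite{Alexander}, so there is no internal argument to compare yours with. Judged on its own, your outline follows the standard slicing strategy (Alexander's original argument, presented in modern form in \cite{Hatcher}, which the paper cites), but the inductive step as you state it has a genuine gap. In the correct innermost-circle argument one picks a regular level plane $P$, takes a circle of $S\cap P$ that is innermost \emph{in the plane}, so that it bounds a disk $D\subset P$ whose interior is disjoint from $S$, and surgers $S$ along $D$. This surgery splits the sphere into two embedded spheres, each of which can be perturbed to have strictly fewer critical points, and it genuinely changes the bounded region; your claim that one can cancel a pair of critical points \lq\lq{}while not changing the ambient diffeomorphism type of the bounded region filled in\rq\rq\ is exactly the point that is not justified. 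Realizing an abstract handle cancellation of $z|_S$ by an ambient isotopy of $S$ in $\RR^3$ is essentially as strong as the theorem itself, so the induction cannot be run in the form you propose. The standard proof avoids this by applying the inductive hypothesis to \emph{both} spheres produced by the surgery and then reassembling: the two balls they bound are either disjoint or nested (or one is viewed in the complement), and in each case one checks directly that the original sphere bounds a ball obtained by gluing the two balls along the product neighborhood $D\times[-1,1]$ of the surgery disk. This reassembly and its case analysis are missing from your sketch.

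Two further points need repair. Your appeal to \lq\lq{}an innermost-disk / irreducibility-type argument\rq\rq\ to produce the disk pushed off $S$ is circular: irreducibility of the complementary regions of a smooth $2$-sphere (that every sphere inside them bounds a ball) is equivalent to the statement being proved, so it cannot be invoked as an ingredient; the disk must instead be exhibited concretely as a horizontal innermost disk as above. And in the base case with exactly two critical points, sweeping the planar Jordan domains into a product $D^2\times[0,1]$ requires a parametrized (isotopy) version of the smooth planar Schoenflies theorem, not just the statement level by level; this is true and elementary, but it should be stated, since it is what turns the family of level disks into a diffeomorphism with $D^3$.
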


\begin{remark}
The Alexander horned sphere (\cite{AlexanderHS}) is a topologically embedded 2-sphere in $S^3$ which bounds non simply connected domains. This is the reason why we look at compact 3-manifolds with smooth boundary.
\end{remark}

\begin{theorem}[Solid torus theorem]\label{th:solid_torus_theorem}
A smooth 2-torus in the 3-dimensional sphere $S^3$ divides $S^3$ into two domains and one of which is a solid torus diffeomorphic to $D^2\times S^1$.
\end{theorem}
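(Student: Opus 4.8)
The plan is to reduce the statement to the structure theory of incompressible surfaces and Dehn's lemma / the loop theorem. Let $T\subset S^3$ be a smoothly embedded $2$-torus. By the argument of Section \ref{sec:1} (intersection theory with $\ZZ/2\ZZ$ coefficients, using $H^3(S^3;\ZZ/2\ZZ)\cong\ZZ/2\ZZ$ together with the fact that a torus is separating because it is null-homologous), $T$ separates $S^3$ into two compact connected $3$-manifolds $N_1$ and $N_2$ with $N_1\cap N_2=T=\bd N_1=\bd N_2$. The claim is that at least one of $N_1$, $N_2$ is a solid torus.

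First I would show that $T$ is compressible in $S^3$, i.e.\ that there is an embedded disk $D$ with $D\cap T=\bd D$ a homotopically nontrivial circle on $T$. For this, note that $H_1(T;\ZZ)\cong\ZZ^2$ maps into $H_1(S^3;\ZZ)=0$, so every simple closed curve on $T$ bounds a singular disk in $S^3$; since $T$ separates, a standard innermost-arc/cut-and-paste argument puts such a curve on one side, say in $N_1$, and then the loop theorem (Papakyriakopoulos) produces an \emph{embedded} compressing disk $D\subset N_1$ with $\bd D$ essential on $T$. Second, I would compress along $D$: cutting $N_1$ along $D$ and capping the two copies of $D$ yields a compact $3$-manifold $N_1'$ whose boundary is a $2$-sphere; by the Alexander theorem (Theorem \ref{th:Alexander}) applied inside $S^3$, that $2$-sphere bounds a $3$-disk, and one checks by reversing the surgery (reattaching a $1$-handle) that $N_1$ is recovered as $D^3$ with a $1$-handle attached to its boundary, i.e.\ $N_1\cong D^2\times S^1$, a solid torus. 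Third, knowing $N_1=D^2\times S^1$, I would identify $N_2=S^3\ssm\op{Int}(N_1)$: since $\bd D$ is a longitude-type essential curve, $N_2$ is obtained by gluing a solid torus to the torus $T$, hence is itself a (possibly knotted) solid torus — but in any case the theorem only asserts that \emph{one} side is a solid torus, so this last step is optional for the stated conclusion.

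The main obstacle is the compression step: one must be careful that the essential curve on $T$ that bounds a singular disk can be pushed entirely to one side of $T$, and that the resulting embedded disk (from the loop theorem) does not re-cross $T$. This is handled by an innermost-curve argument on the intersection of a generic singular disk with $T$, removing intersections one at a time by the usual surgery on the disk; the subtlety is purely that of making the cut-and-paste respect which component of $S^3\ssm T$ one lands in. I expect the remaining ingredients — the loop theorem, and the reconstruction of a handlebody from a ball by reversing a disk-compression — to be routine once this is in place.
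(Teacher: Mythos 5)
Your proof is correct and follows essentially the same route as the paper: produce an embedded compressing disk on one side of the torus, surger $T$ along it to obtain a $2$-sphere, invoke the Alexander theorem to see that the cut-open piece is a $3$-ball, and recover that side as a ball with a $1$-handle attached, i.e.\ a solid torus. The only real difference is how you justify compressibility: the paper argues via van Kampen's theorem (if both inclusions $T^2\subset A$ and $T^2\subset B$ were $\pi_1$-injective, $\pi_1(S^3)$ would be a nontrivial amalgamated product) and then applies Dehn's lemma, noting that the loop theorem is not even needed because every essential class on $T^2$ contains a simple closed curve; you instead use simple connectivity of $S^3$ plus an innermost-circle cut-and-paste on a singular disk before invoking the loop theorem --- both are standard and valid, yours being more hands-on, the paper's avoiding the transversality bookkeeping. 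One small correction to your optional third step: $N_2$ need not be a solid torus; it is the exterior of the knot formed by the core of $N_1$, and is a solid torus only when that knot is trivial (exactly the point the paper discusses right after the theorem). This does not affect the stated result, which only claims that one side is a solid torus.
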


It is worth reviewing the proof of Theorem \ref{th:solid_torus_theorem}, which we follow the argument in the Rolfsen's book (\cite{Rolfsen}).

The proof uses the description of the fundamental group. 
Let the torus $T^2$ divides $S^3$ into two domains $A$ and $B$: $$S^3=A\cup B\quad\text{and}\quad A\cap B=T^2.$$

Then one of the two inclusion maps $T^2\subset A$ and $T^2\subset B$ induces a homomorphism in the fundamental groups with nontrivial kernel. 

For,
assume that both of the inclusion maps $T^2\subset A$ and $T^2\subset B$ induce injections in the fundamental groups. Then by taking a base point $b\in T^2$, the van Kampen theorem asserts that the fundamental group $\pi_1(S^3,b)$ is written as the amalgamated product of the groups $\pi_1(A,b)$ and  $\pi_1(B,b)$ over $\pi_1(T^2,b)$:
$$\pi_1(S^3,b)\cong \pi_1(A,b)\underset{\pi_1(T^2,b)}{*} \pi_1(B,b).$$
This  amalgamated product is defined to be the group generated by the generators of $\pi_1(A,b)$ and $\pi_1(B,b)$ with the relations of the groups $\pi_1(A,b)$ and $\pi_1(B,b)$ and the relations which say the images of generators of $\pi_1(T^2,b)$ in $\pi_1(A,b)$ and $\pi_1(B,b)$ coincide. By the assumption of the injectivity of the induced maps, the amalgamated product is a non trivial group, but $\pi_1(S^3,b)$ is trivial. 

Thus one of the two inclusion maps $T^2\subset A$ and $T^2\subset B$ induces a homomorphism in the fundamental groups with nontrivial kernel, and 
we may assume that $\pi_1(T^2,b)\lra \pi_1(A,b)$ has nontrivial kernel, for which we can apply the loop theorem (\cite{Papa2}). 

\begin{theorem}[The loop theorem]\label{th:loop_theorem}
For a compact 3-dimensional manifold $M$ with boundary $\bd M$, if the kernel $\op{ker}(i_*:\pi_1(\bd M,b)\lra \pi_1(M,b))$ is nontrivial, then there is a \textbf{simple} closed curve (a simple loop) $c$ on $\bd M$ such that $[c]\neq \mathbf{1}\in \op{ker}(i_*)$. 
\end{theorem}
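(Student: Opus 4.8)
My plan is to follow Papakyriakopoulos's tower construction, which is the standard route to this result. The starting point is that the kernel of $i_*:\pi_1(\bd M,b)\lra\pi_1(M,b)$ is nontrivial, so there is a loop on $\bd M$, null-homotopic in $M$, but not null-homotopic on $\bd M$. By general position on a smooth (or PL) $3$-manifold, I can realize the null-homotopy as a map $\varphi:(D^2,\bd D^2)\lra(M,\bd M)$ with $\varphi(\bd D^2)$ an immersed loop in $\bd M$ whose homotopy class lies in the kernel but is nontrivial in $\pi_1(\bd M,b)$; the goal is to upgrade this singular disk to an embedded one whose boundary is an \textbf{essential simple} loop on $\bd M$. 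The non-triviality in $\pi_1(\bd M)$ must be preserved throughout, and keeping track of it is exactly the delicate bookkeeping in the argument.

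First I would set up the \emph{tower}: let $M_1\lra M$ be the cover corresponding to the subgroup $\mathrm{im}\,\varphi_*\circ\bigl(\text{loops near }\varphi(D^2)\bigr)$, or more precisely take a regular neighborhood $N_0$ of $\varphi(D^2)$ in $M$, pass to the universal (or a suitable double) cover $\widetilde N_0\lra N_0$, lift $\varphi$ to $\varphi_1:D^2\lra\widetilde N_0$, then take a regular neighborhood $N_1$ of $\varphi_1(D^2)$, and iterate. The key finiteness input is that the first homology (with $\ZZ/2\ZZ$ coefficients, say) of these regular neighborhoods strictly drops at each nontrivial stage — this is where Dehn's lemma-type surgery on the disk feeds in — so after finitely many steps the tower terminates with a neighborhood $N_k$ on which the lifted disk $\varphi_k$ has simply connected (indeed, via the half-lives-half-dies argument, handle-free enough) ambient neighborhood. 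At the top of the tower the loop $\varphi_k(\bd D^2)$ bounds an embedded disk by an elementary innermost-arc/Dehn's-lemma argument, and I would arrange $\varphi_k(\bd D^2)$ to be a simple closed curve that is still essential in the relevant boundary component upstairs.

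The second half of the argument pushes the embedded disk back \emph{down} the tower. Descending one level at a time, I use cut-and-paste along the double-point curves of the covering projection $N_{j}\lra N_{j-1}$: an embedded disk upstairs projects to a disk with only double curves downstairs, and by repeatedly doing innermost-disk exchanges (replacing a sub-disk bounded by an innermost intersection curve by the other sheet) I can remove all self-intersections without reintroducing boundary singularities, arriving at an embedded disk in $N_0\subset M$ whose boundary is a simple closed curve $c$ on $\bd M$. The final point is to verify $[c]\neq\mathbf 1$ in $\pi_1(\bd M,b)$: one shows the cut-and-paste moves, performed on the boundary side, change $[\bd D^2]$ only by elements of $\ker(i_*)$, and an innermost choice guarantees that at least one of the resulting simple boundary curves remains outside the trivial class — if \emph{all} pieces were inessential on $\bd M$ then the original loop would have been too, a contradiction.

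The main obstacle, and the technical heart of the proof, is the descent step: controlling how the boundary curve changes under the surgeries so that essentiality in $\pi_1(\bd M)$ is not lost, while simultaneously eliminating \emph{all} self-intersections of the disk. Getting a genuinely \emph{simple} loop (not merely a null-homotopic singular one) requires the innermost-curve surgeries to be organized so that they terminate and so that the "bad" component — the one still representing a nontrivial class — survives; this is precisely the subtle combinatorial argument that distinguishes the Loop Theorem from the easier statement that \emph{some} boundary loop (possibly singular) dies. I would cite \cite{Papa2} for the full details of this bookkeeping rather than reproduce it.
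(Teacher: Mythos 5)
The paper does not actually prove the Loop Theorem: it quotes it as a classical result with a citation to \cite{Papa2}, and your sketch is precisely the standard Papakyriakopoulos--Stallings tower argument contained in that reference, so the two treatments agree in approach. Your outline is sound at this level of detail (one small quibble: termination of the tower is usually obtained from the strict decrease of the singular set of the lifted disk, not from a drop in $H_1$, which instead governs when the tower may be continued), and deferring the delicate descent bookkeeping to \cite{Papa2} matches what the paper itself does.
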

Then we apply Dehn's lemma to this simple closed curve $c$.
\begin{theorem}[Dehn's lemma for a boundary simple closed curve]\label{th:Dehn}
For a compact 3-dimensional manifold $M$ with boundary, if  a simple closed curve $c$ on $\bd M$ represents a nontrivial element in $\op{ker}(i_*:\pi_1(\bd M,b)\lra \pi_1(M,b))$, then there is an embedded disk $D^2\subset M$ such that $c =\bd D^2$. This disk is called a compressing disk.
\end{theorem}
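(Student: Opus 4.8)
The plan is to follow Papakyriakopoulos's \emph{tower construction}. Since $[c]$ lies in $\op{ker}(i_*)$ it is trivial in $\pi_1(M,b)$, so $c$ bounds a singular disc: there is a smooth map $f\colon D^2\lra M$ whose restriction to $\bd D^2$ is a homeomorphism onto $c$. Because $c$ is a simple closed curve, a small perturbation keeping $\bd D^2$ fixed puts $f$ in general position and makes it an embedding on a collar of $\bd D^2$; the singular set of $f$ is then a $1$-complex (double curves, finitely many triple points, finitely many branch points) contained in $\op{Int}(D^2)$, and if it is empty we are already done. The idea is to trade self-intersections for sheets of coverings until the singular disc becomes simple enough to be surgered into an embedding, and then to push embeddedness back down to $M$.

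First I would let $N_0$ be a compact regular neighbourhood of $f(D^2)$ in $M$, so that $f=f_0$ factors through $N_0$, a compact $3$-manifold with nonempty boundary. Whenever $H_1(N_i;\ZZ/2\ZZ)\neq 0$ there is an epimorphism $\pi_1(N_i)\lra\ZZ/2\ZZ$, hence a connected double cover of $N_i$, to which $f_i$ lifts because $D^2$ is simply connected; I would take $N_{i+1}$ to be a compact regular neighbourhood of the lifted image and $f_{i+1}$ the lift. This builds a tower
$$N_0\ \xleftarrow{p_1}\ N_1\ \xleftarrow{p_2}\ N_2\ \xleftarrow{p_3}\ \cdots$$
of compact $3$-manifolds with boundary, each $p_i$ the restriction of a double covering, carrying maps $f_i\colon D^2\lra N_i$ that lift one another. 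The technical heart is to show that the construction strictly decreases a complexity invariant of the $N_i$, say $\dim_{\ZZ/2\ZZ}H_1(N_i;\ZZ/2\ZZ)$, so that the tower must terminate at some $N_n$ with $H_1(N_n;\ZZ/2\ZZ)=0$, which admits no further connected double cover.

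At the top, $f_n\colon D^2\lra N_n$ may still carry an interior singular set, which I would remove by cut-and-paste along its double curves: an innermost double curve on $D^2$ bounds a subdisc that is mapped in embedded, and excising it and regluing a pushed-off parallel copy along the matching preimage lowers the number of components of the singular set, the vanishing of $H_1(N_n;\ZZ/2\ZZ)$ being what keeps the successive reductions available. After finitely many such moves $f_n$ becomes an embedding onto a disc $E_n\subset N_n$ with $\bd E_n=f_n(\bd D^2)$; the boundary is never touched, since all the surgery is interior. Then I would descend the tower: the image $p_n(E_n)\subset N_{n-1}$ is generically immersed, but since $p_n$ is a \emph{double} covering and $E_n$ is embedded, its self-intersections reduce to transverse double curves where two sheets of the cover cross — no branch points, no triple points — so the very same innermost-disc surgery, with the boundary held fixed, turns $p_n(E_n)$ into an embedded disc in $N_{n-1}$ with boundary $f_{n-1}(\bd D^2)$. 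Repeating down to $N_0\subset M$, where $f_0(\bd D^2)=c$, yields the compressing disc.

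The hard part will be the finiteness of the tower — i.e.\ the homological estimate bounding $\dim_{\ZZ/2\ZZ}H_1$ of each $N_i$ in terms of the previous one — together with the bookkeeping showing that each round of cut-and-paste genuinely simplifies the singular set. One cannot shortcut this by surgering the original $f$ directly: branch and triple points obstruct the innermost-disc move, and a naive surgery of that kind would prove the theorem for free. For these details I would cite Papakyriakopoulos \cite{Papa2}; the simplification of the top-of-tower step is due to Shapiro and Whitehead. A quicker alternative, once one is granted the embedded-disc form of the loop theorem, is to apply it with $B\subset\bd M$ an annular neighbourhood of $c$ and the trivial normal subgroup: since $c$ generates $\pi_1(B)\cong\ZZ$ and is null-homotopic in $M$, one obtains an embedded disc whose boundary is an essential embedded circle in the annulus $B$, hence a circle isotopic in $\bd M$ to the core $c$, and a collar isotopy makes that boundary equal to $c$. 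Proofs via PL minimal surfaces in the spirit of Meeks and Yau are also possible.
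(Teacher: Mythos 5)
The paper does not prove this statement at all: it is quoted as Papakyriakopoulos's theorem, cited as \cite{Papa2}, and used as a black box in the proof of the solid torus theorem. So there is nothing of the author's to compare you against; your tower sketch is the standard proof of Dehn's lemma, and deferring the hard steps to \cite{Papa2} is no more than what the paper does for the whole statement. As an outline it is essentially right: the null-homotopy gives a singular disc, general position makes its singular set a $1$-complex of double curves, triple points and branch points contained in the interior, the tower of double covers of regular neighbourhoods simplifies the map, and the descent creates only double curves because each $p_i$ is two-to-one and a local homeomorphism. Your remark that one cannot simply surger the original singular disc is also the right instinct.

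Two details are off, though both lie in the parts you explicitly defer to the literature. First, the tower does not terminate because $\dim_{\ZZ/2\ZZ}H_1(N_i;\ZZ/2\ZZ)$ drops; the quantity that strictly decreases is the singular set of the lift. One has $S(f_{i+1})\subseteq S(f_i)$ as subsets of $D^2$, and the inclusion is proper because equality would make $p_{i+1}^{-1}(f_i(D^2))$ two disjoint homeomorphic copies of $f_i(D^2)$, i.e.\ the double cover would be trivial over $f_i(D^2)$ and hence over its regular neighbourhood $N_i$ (which deformation retracts onto it), contradicting connectedness of the cover. Second, the Shapiro--Whitehead/Stallings simplification of the top of the tower is not the interior cut-and-paste you describe but precisely its avoidance: $H_1(N_n;\ZZ/2\ZZ)=0$ forces $\bd N_n$ to be a union of $2$-spheres (half of $H_1(\bd N_n;\ZZ/2\ZZ)$ dies in $N_n$), the embedded curve $f_n(\bd D^2)$ lies on one such sphere and bounds a disc on it, and one pushes that disc into the interior of $N_n$; surgering the singular disc $f_n$ itself along innermost double curves is the original, far more delicate route, so you are describing the unsimplified argument while attributing it to the simplifiers. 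Neither point undermines your answer as a citation-backed sketch, but as literally written the induction you propose would not close.
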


\textit{Proof} {\textit{of Theorem \ref{th:solid_torus_theorem}.}}
In the case of an embedded $T^2$ in $S^3$, $T^2$ divides $S^3$ into two domains $A$ and $B$, and we may assume that $\op{ker}(i_*:\pi_1(T^2,b)\lra \pi_1(A,b))$ is nontrivial.  Then by Theorem \ref{th:Dehn},  we find a simple closed curve $c$ embedded in $T^2$ homtopically nontrivial in $T^2$, which bounds an embedded disk $D^2$ in $A$. (Here, the loop theorem is not necessary for $T^2$ because any closed curve on  $T^2$ is homotopic to a simple closed curve. However Dehn's lemma is necessary.)

For the simple closed curve $c$ in $T^2$, the completion of $T^2\ssm c$ is an annulus. The disk $D^2$ is always two-sided, that is, $D^2\subset A$ has a product neighborhood $$D^2\times (-1,1)\subset D^2\times [-1,1]\subset A,$$ where $\bd D^2\times (-1,1)\subset T^2$ is a neighborhood of $c=\bd D^2\times \{0\}$ in $T^2$. Then $T^2\ssm \big(\bd D^2\times (-1,1)\big)$ is an annulus, and this annulus together with two disks $D^2\times \{-1\}$ and $D^2\times \{+1\}$ form a sphere. 
By Theorem \ref{th:Alexander}, this sphere in $S^3$ divides $S^3$ into two 3-disks. Then one of the two 3-disks contains $D^2\times (-1,1)$ and the other 
does not contain  $D^2\times (-1,1)$, where $D^2\times (-1,1)$ is the neighborhood of $D^2$ in $A$. We see that the 3-disk which does not contain  $D^2\times (-1,1)$ coincides with $A\ssm \big(D^2\times (-1,1)\big)$.
This implies that $A$ is diffeomorphic the union of the 3-disk and $D^2\times [-1,1]$ attached along two 2-disks $D^2\times \{-1,1\}$, hence $A$ is diffeomorphic to $D^2\times S^1$ and Theorem \ref{th:solid_torus_theorem} is proved.
\hfill \qed\\

The solid torus theorem says that the part $A$ is a tubular neighborhood of an embedded circle, that is, a knot in $S^3$.

It is very interesting to look at the shape of $B=S^3\ssm A$. The bounded domain $B$ together with $D^2\times [-1,1]$ is diffeomorphic to a 3-disk. This means $B$ is diffeomorphic to the 3-disk with $\op{Int}(D^2)\times [-1,1]$ deleted. This $\op{Int}(D^2)\times [-1,1]$ is a neighborhood of an arc $\{0\}\times [-1,1]$ in the 3-disk. This arc is the knotted part of the circle $\{0\}\times S^1\subset D^2\times S^1\cong A$, and $B$ is diffeomorphic to the knot exteior. It is called a cube with knotted hall in \cite{Rolfsen}. We use the term knot exteriors in this article.
If a knot exterior is a solid torus, then the knot is unknot and the 3-sphere is just decomposed into the union of two solid tori attached along the boundary:
$S^3=S^1\times D^2\underset{S^1\times S^1}\cup D^2\times S^1$. This case we call the knot exterior trivial.

\begin{figure}
\begin{center}
\includegraphics[width=5.5cm]{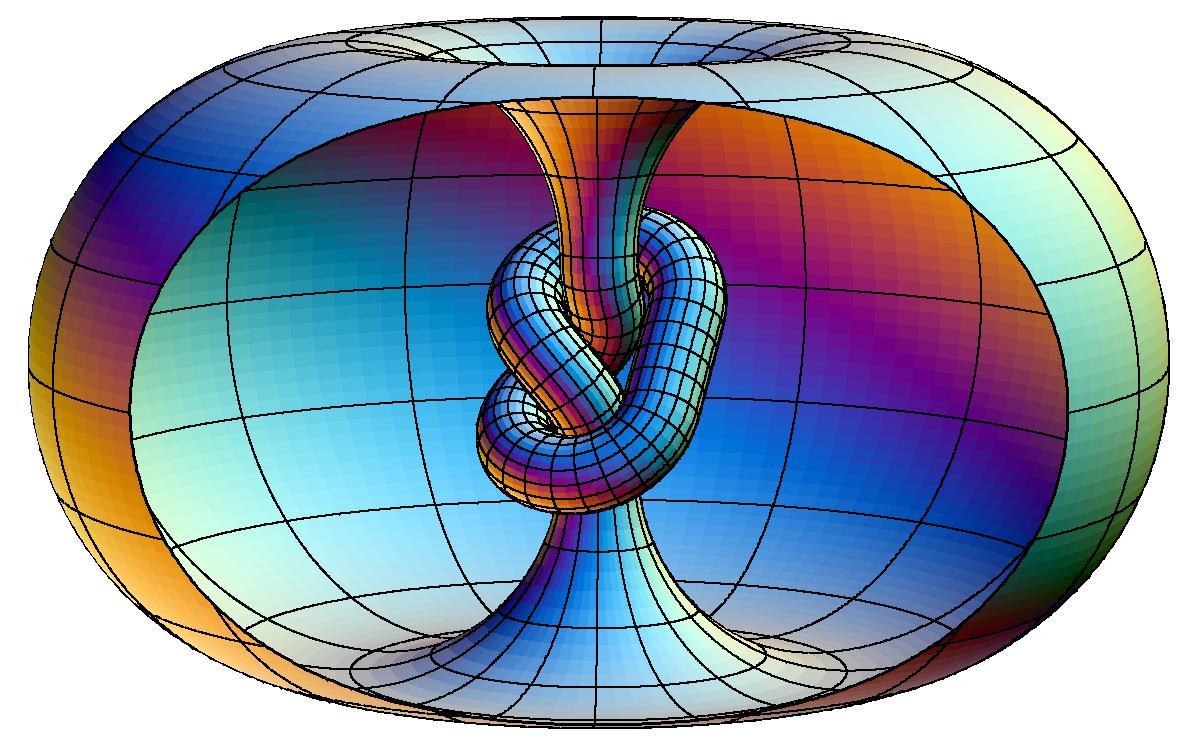}\ \ 
\includegraphics[width=5.5cm]{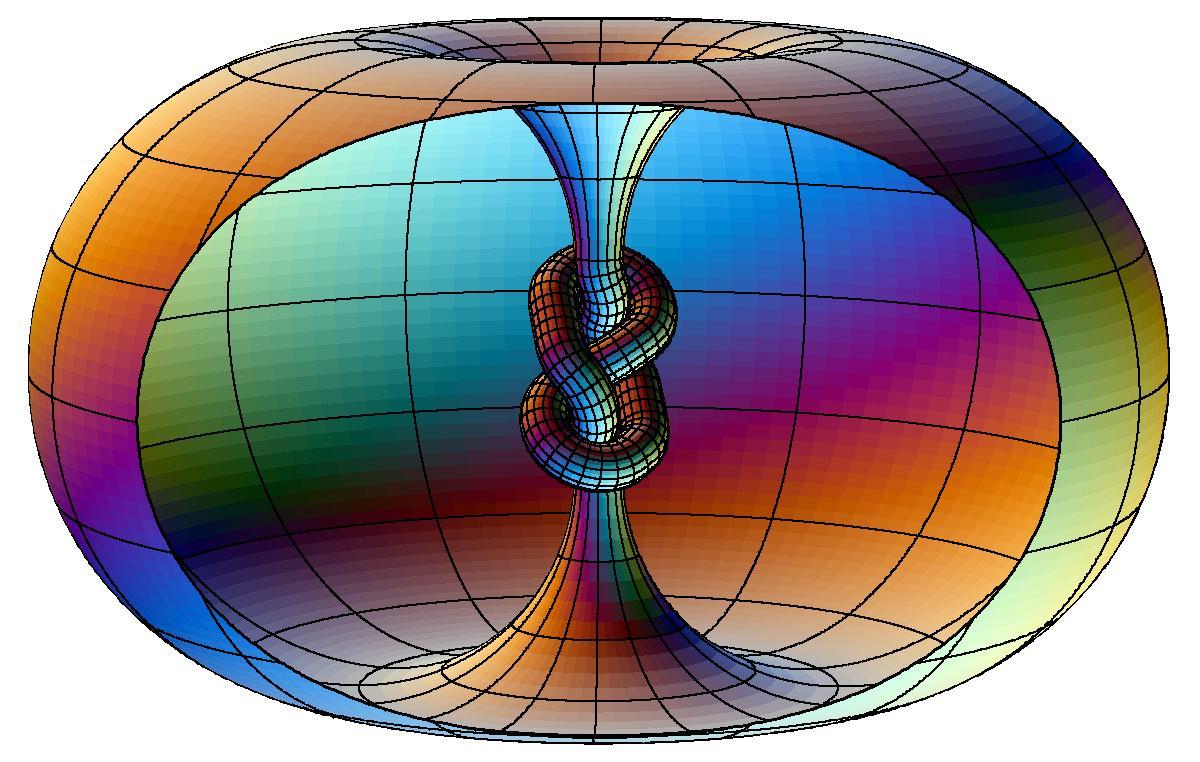}
\caption{The knot exteriors of the trefoil knot (left) and the figure eight knot (right). The left figure has been used in the cover of Sugaku Tsushin published by Mathematical Society of Japan.
}\label{fig:mathsoc_trefoil}
\end{center}
\end{figure}

\begin{proposition}
In the 3-dimensional Euclidean space $\RR^3$, an embedded torus bounds either a solid torus or a knot exterior. 
\end{proposition}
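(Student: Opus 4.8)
The plan is to reduce the statement to the solid torus theorem (Theorem \ref{th:solid_torus_theorem}) by passing to the one-point compactification $S^3=\RR^3\cup\{\infty\}$, and then simply to keep track of which side of the torus the added point $\infty$ ends up on.

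First I would note that the embedded torus $T^2\subset\RR^3$ sits inside $S^3$ via the inclusion $\RR^3\subset S^3$, and that, because $T^2$ is contained in $\RR^3$, we have $\infty\notin T^2$. By Theorem \ref{th:solid_torus_theorem}, $T^2$ divides $S^3$ into two domains $A$ and $B$ with $A\cap B=T^2$, one of which, say $A$, is a solid torus diffeomorphic to $D^2\times S^1$. As explained in the discussion following that theorem, the complementary domain $B=S^3\ssm\op{Int}(A)$ is then the exterior of the core knot of $A$, that is, a knot exterior (possibly the trivial one, in which case $B$ is again a solid torus).

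Next I would invoke the fact established in Section \ref{sec:1} that the closed connected surface $T^2\subset\RR^3$ divides $\RR^3$ into exactly two components, one bounded and one unbounded, the bounded domain $M$ being the one not containing the exterior of a large ball, equivalently the one whose closure in $S^3$ does not meet a neighborhood of $\infty$. Since $\infty$ lies in exactly one of the open sets $\op{Int}(A)$, $\op{Int}(B)$, there are two cases. If $\infty\in\op{Int}(A)$, then $\op{Int}(A)\ssm\{\infty\}$ is the unbounded component of $\RR^3\ssm T^2$, so $M=B$ is a knot exterior. If $\infty\in\op{Int}(B)$, then $\op{Int}(B)\ssm\{\infty\}$ is the unbounded component, so $M=A$ is a solid torus. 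In either case $M$ is a solid torus or a knot exterior, which is the assertion.

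I do not expect a serious obstacle here: the two points to be careful about are that the hypothesis $T^2\subset\RR^3$ genuinely forces $\infty$ onto one side of $T^2$, and that deleting the single point $\infty$ from $\op{Int}(A)$ (resp.\ $\op{Int}(B)$) leaves the \emph{other} piece $B$ (resp.\ $A$) untouched, so that the bounded domain is literally one of the two pieces produced by the solid torus theorem. It is also worth remarking explicitly that a trivial knot exterior is itself a solid torus, so the two alternatives in the statement are not meant to be mutually exclusive.
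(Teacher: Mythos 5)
Your argument is correct and is essentially the paper's own reasoning: the proposition is stated there as a direct consequence of the solid torus theorem in $S^3$ together with the preceding discussion identifying the complementary domain $B$ as a knot exterior, and the bounded side in $\RR^3$ is determined exactly as you do, by which domain contains $\infty$. Your explicit case check on the position of $\infty$, and the remark that a trivial knot exterior is again a solid torus, match the intended proof.
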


Figure \ref{fig:mathsoc_trefoil} 
 shows the knot exteriors of the trefoil knot and the figure eight knot, which were drawn by Tetsuro Kawasaki \cite{Kawasaki}. It should be understood that the boundary surfaces do not touch themselves and the windows are closed. 
Without the front windows they look very similar.

\section{Surfaces of higher genus}
\label{sec:3}

Now we look at an embedded connected surface $S$ of genus $g\geqq 2$.

A tubular neighborhood of a knot (an embedded circle in the 3-dimensional Euclidian space $\RR^3$) is diffeomorphic to the solid torus $D^2\times S^1$. 
In the same way, a regular neighborhood of a spacial graph is diffeomorphic to a handlebody. 
\begin{definition}[Handlebodies]
A handlebody of genus $g$ is a boundary connected sum 
$$\overset{g}{\overbrace{D^2 \times S^1 \natural\cdots \natural D^2 \times S^1}}$$
of $g$ solid tori $D^2 \times S^1$. 
\end{definition}

Since the circles embedded in a plane in  $\RR^3$ are un knots, the solid tori which are tubular neighborhoods of planer ciecles are called unknotted.  
In the same way, regular neighborhoods of connected planar graphs in $\RR^3$ are called unknotted handlebody. It is of genus $g$ if the Euler number of the connected  planar graph is  $1-g$ (which is equal to that of the regular neighborhood). 

The boundary $\bd M$ of an embedded handlebody $M$ of genus $g$ is a surface of genus $g$. The general situation of connected surfaces $S$ of genus $g$ embedded in the 3-dimensional Euclidean space $\RR^3$ is far  more complicated.

The surface $S$ divides $S^3=\RR^3\cup\{\infty\}$ into two domains $A$ and $B$, and as in the case of the torus, by the Loop theorem (Theorem \ref{th:loop_theorem}) and Dehn's lemma (Theorem \ref{th:Dehn}), we find a simple closed curve $c$ on $S$ homotopically non trivial on $S$ which bounds a disk $D^2$ in one domain, say $A$. Then there is a neighborhood $D^2\times (-1,1)\subset D^2\times [-1,1] \subset A$.

We look at the surface 
$$S'=(S\ssm \big(\bd D^2\times (-1,1)\big)) \cup \big(D^2\times \{-1,1\}\big),$$
which is called the surface obtaind from the surface $S$ by the surgery along the 2-disk $D^2$. See Figure \ref{fig:surgery}. 
The surface $S'$ is
is either a connected surface of genus $g-1$ or the disjoint union of connected surfaces of genus
$k\geqq 1$ and $g-k\geqq 1$, depending on whether $c$ is non-separating or separating.

In the former case, we have a connected bounded domain $A'= A\ssm D^2\times (-1,1)$ with $S'=\bd A'$ being a connected surface of genus $g-1$ and a simple curve $c'=\{0\}\times [-1,1]$ outside of $A'$ joining two points of $\bd A'$, and $A$ is the union of $A'$ and a tubular neighborhood of the curve $c'$.

In the latter case, we have two bounded domains $A_1$ with $\bd A_1$ being a connected surface of genus $k$ and $A_2$ with $\bd A_2$ being a connected surface of genus $g-k$ which are disjoint such that $$A_1\sqcup A_2= A\ssm \big(D^2\times (-1,1)\big),\quad S'=\bd A_1\sqcup \bd A_2$$  and there is a simple curve $c'$ joining a point of $\bd A_1$ and a point of $\bd A_2$, and $A$ is the union of $A_1\sqcup A_2$ and a tubular neighborhood of the curve $c'$. 

\begin{figure}
\begin{center}
\includegraphics[width=10cm]{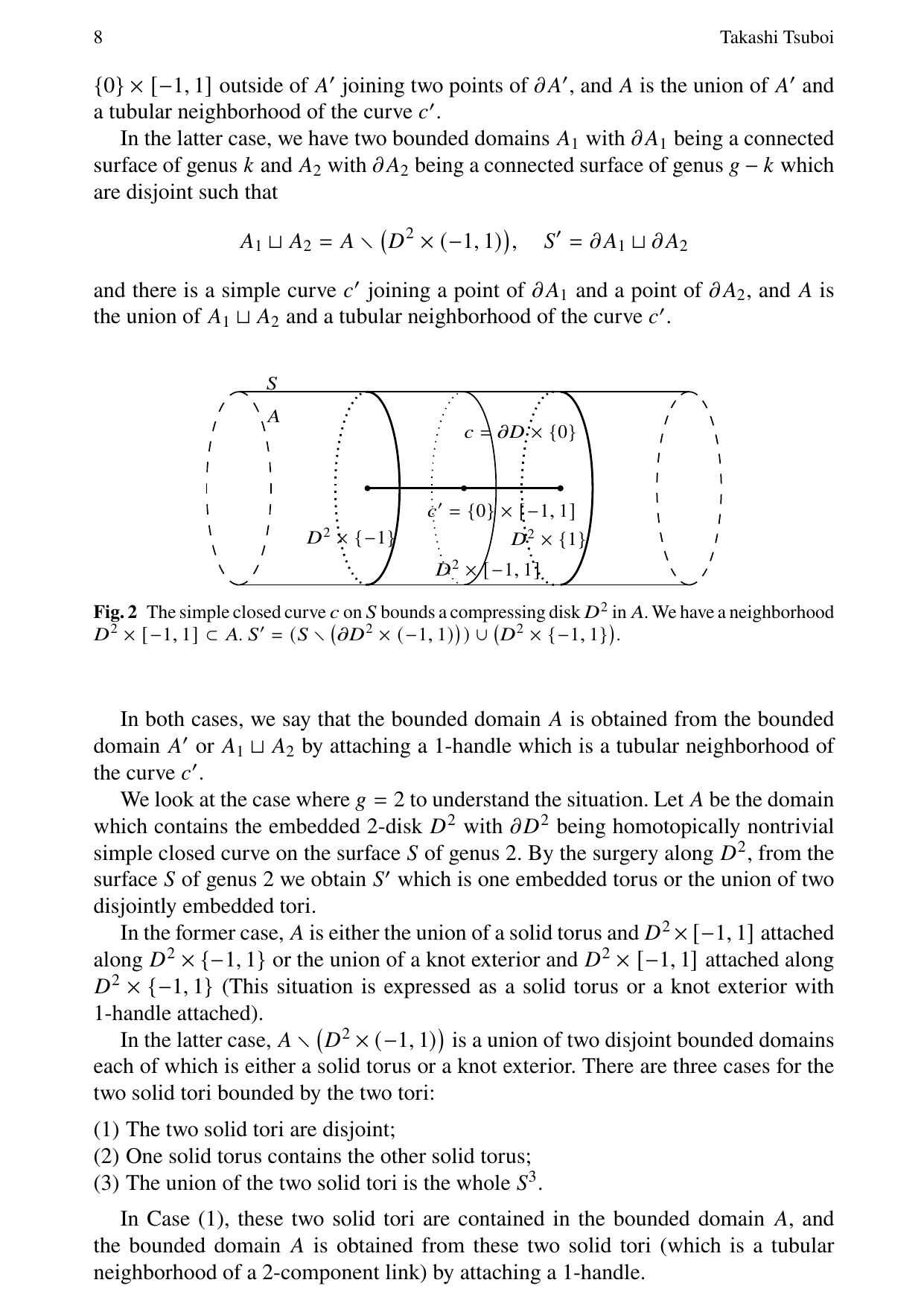}
\caption{The simple closed curve $c$ on $S$ bounds a compressing disk $D^2$ in $A$. We have a neighborhood $D^2\times [-1,1]\subset A$. 
$S'=(S\ssm \big(\bd D^2\times (-1,1)\big)) \cup \big(D^2\times \{-1,1\}\big)$.}
\label{fig:surgery}
\end{center}
\end{figure}

In both cases, we say that the bounded domain $A$ is obtained from the bounded domain $A'$ or $A_1\sqcup A_2$ by attaching a 1-handle which is a tubular neighborhood of the curve $c'$. 

We look at the case where $g=2$ to understand the situation. Let $A$ be the domain which contains the embedded 2-disk $D^2$ with $\bd D^2$ being homotopically nontrivial simple closed curve on the surface $S$ of genus 2. 
By the surgery along $D^2$, from the surface $S$ of genus 2 we obtain $S'$ which is one embedded torus or the union of two disjointly embedded tori.

In the former case, $A$ is either the union of a solid torus and $D^2\times [-1,1]$ attached along $D^2\times \{-1,1\}$ or the union of a knot exterior  and $D^2\times [-1,1]$ attached along $D^2\times \{-1,1\}$ (This situation is expressed as a solid torus or  a knot exterior with 1-handle attached). 

In the latter case, $A\ssm \big(D^2\times (-1,1)\big)$ is a union of two disjoint bounded domains each of which is either a solid torus or a knot exterior. 
There are three cases for the two solid tori bounded by the two tori:
\begin{itemize}
\item[(1)]\ The two solid tori are disjoint; 
\item[(2)]\ One solid torus contains the other solid torus; 
\item[(3)]\ The union of the two solid tori is the whole $S^3$.
\end{itemize}

In Case (1), these two solid tori are contained in the bounded domain $A$, and the bounded domain $A$ is obtained from these two solid tori (which is a tubular neighborhood of a 2-component link) by attaching a 1-handle.

In Case (2), the outer solid torus is a tubular neighborhood of a knot $k_0$ and the inner solid torus is a tubular neighborhood of a knot $k_1$ in the outer solid torus. In this case the knot $k_1\subset S^3$ is called a satelite of the knot $k_0$.
The bounded domain $A$ contains the inner solid torus and the complement of the outer solid torus, and $A$ is obtained from the union of the outer knot exterior and the inner solid torus by attaching a 1-handle. 

In Case (3), we have one knot exterior $A_1$ is contained in the other solid torus $D^2\times S^1$ which is the tubular neighborhood of another knot. We can show that 
the knot exterior $A_1$ is contained in a 3-disk in the solid torus except the case where the solid tori are the regular neighborhood of a Hopf link. 

For, let $D^2$ be a disk of the form $D^2\times \{*\}$ in the solid torus $D^2\times S^1$. We look at the intersection of the boundary torus $\bd A_1$ of the knot exterior and $D^2$. We may assume that the intersection is transverse and the number of components of the intersection is minimized among the isotopy class of $D^2$ fixing $\bd D^2$. The intersection is a union of disjoint loops on the torus $\bd A_1$. If a component of the intersection is a loop which bounds a 2-disk on the torus $\bd A_1$, then the inner-most component of the intersection in this 2-disk can be removed by an isotopy. Since we assumed the number of components is minimum, there are only essential loops in $\bd A_1$ which appear in the intersection. Now we look at the intersection in the 2-disk $D^2=D^2\times \{*\}$. then the inner-most component in $D^2$ bounds a 2-disk $E$ in $D^2$.  This means that an essential loop on $\bd A_1$ bounds a 2-disk $E$ in the solid torus $D^2\times S^1$. 

By using a neighborhood $E\times [-1,1]$ of this 2-disk $E$, we obtain a 2-sphere $$(\bd A_1 \ssm \big(\bd E\times (-1,1)\big))\cup \big(E\times \{-1,1\}\big)$$ in $D^2\times S^1$. This sphere bounds a 3-disk in $D^2\times S^1$. If this 3-disk does not contain  $E\times [-1,1]$, then we see that the solid torus bounded by $\bd A$ is contained in $D^2\times S^1$ and, if $A$ is at the same time a knot exterior, 
the solid tori are the regular neighborhood of a Hopf link. If this 3-disk contains $E\times [-1,1]$, the knot exterior $A$ is contained in a 3-disk in 
$D^2\times S^1$. Thus there are two exteriors of knots in disjoint 3-disks, and $A$ is obtained from them by attaching a 1-handle. 

As is seen even in the case where $g=2$, embedded surfaces of genus $g$ is quite complicated.

It is important however that by finding a homotopically nontrivial loop (an essential loop) on the surface of genus $g$ contractible in one of the two bounded domains obtained by dividing by $S$, and by modifying the surface $S$ by using the 2-disk (by surgery), we obtain surfaces of lower genera. After obtaining surfaces of lower genera,  we continue finding essential loops on new surfaces and surgering the surfaces. We see that the disks in bounded domains used to contract essential loops can be taken disjointly in $S^3$. In each surgery, either the number of components of $S$ increase and the genus of each component obtained by surgery decreases, or the number of components of $S$ is preserved and the genus of the surgered component decreases. 
After performing the process we obtain finitely many 2-spheres in $S^3$. 

The starting surface $S$ divides $S^3$ into two bounded domains $A$ and $B$. 
If we look at one of the two, say $A$, either we delete $D^2\times (-1,1)$ from $A$ or we add $D^2\times [-1,1]$ to $A$, to change the part $\bd D^2\times (-1,1)$ of $S$ to $D^2\times \{-1,1\}$.
When we delete $D^2\times (-1,1)$ from $A$ and the number of components of the surface increases, 
the number of component of $A$ increases.
When we add $D^2\times [-1,1]$ to $A$ and the number of components of the surface increases, the number of component of $B$ increases. 
In the final stage, the surface becomes a disjoint union of finitely many 2-spheres. The components of bounded domain obtained by dividing $S^3$ by the surface are 3-disks with a disjoint union of finitely many 3-disks deleted.

At each stage, we think of a bipartite graph with the vertices being the components of bounded domain obtained by dividing $S^3$ by the surface and the edges being given by the components of the surface where two bounded domains are adjacent.  
Note that the vertices have the origin either $A$ or $B$, and hence the graph is bipartite, and that the graph is a tree.  When the number of the components of the surface changes, the bipartite tree changes. This bipartite tree describes completely the final stage, the configuration of the disjoint 2-spheres, but the final stage may not be unique. If we have a configuration of disjoint 2-spheres in $S^3$, we have a unique bipartite tree. 

We can think of recovering the original $S$ from the disjoint union of 2-spheres in $S^3$. 
This can be performed by attaching $D^2\times [-1,1]$ along $D^2\times\{-1,1\}$ to 
 the $A$ part deleting $\op{Int}(D^2)\times [-1,1]$ from the $B$ part, or 
attaching $D^2\times [-1,1]$ along $D^2\times\{-1,1\}$ to 
 the $B$ part deleting $\op{Int}(D^2)\times [-1,1]$ from the $A$ part.
This operation is done step by step and it is quite complicated because in each step the core $\{0\}\times [-1,1]$ of $D^2\times [-1,1]$ can be extremely knotted.  

\section{Morse height functions on bounded domains}
\label{sec:Morse_height_function}

In order to describe bounded domains, it is standard to consider height functions on them. It will be used in studying the visibility of bounded domains in the section \ref{sec:visibility}.

It is known (see for example, \cite{Tsuboi} page 108) that the height function defined by the orthogonal projection to a generic line is a Morse function. Here a Morse function on the bounded domain $M\lra \RR$ is a function which has no critical points on $M$ and whose restriction to the boundary $\bd M\lra \RR$ has only non-degenerate critical points.
If the orthogonal projection to a line is a Morse function, we call this projection a Morse height function. 

We take the orthonormal coordinates $(x,y,z)$ such that the projection to the $z$-axis is a Morse height function (or we can rotate $M$ so that $z$ is a Morse height function). The inverse image of a point of the $z$-axis is a level set.
Then at the critical point $(x_0,y_0,z_0)$ of the function $z|\bd M$, the boundary surface $\bd M$ in a neighborhood $V$ of $(x_0,y_0,z_0)$ is described as the graph of a function $f$ defined in a neighborhood $U$ of $(x_0,y_0)$: $$\bd M \cap V=\{ (x,y,f(x,y))\ \big|\  (x,y)\in U\},$$
such that 
$$\big(\dfrac{\pa f}{\pa x}, \dfrac{\pa f}{\pa y}\big)_{(x_0,y_0)}=(0,0),$$ 
and the Hessian matrix $$\begin{pmatrix}\dfrac{\pa^2 f}{\pa x^2}& \dfrac{\pa^2 f}{\pa x\pa y}\\
 \dfrac{\pa^2 f}{\pa x\pa y}&\dfrac{\pa^2 f}{\pa x^2}\end{pmatrix}_{(x_0,y_0)}$$is non-degenerate, which is the definition that the critical point is non-degenerate. 
By the Morse Lemma, we can choose the local coodinates $(u_1,u_2,v)$ around $(x_0,y_0,z_0)$ such that 
$$v=f(u_1,u_2)-f(0,0)= \left\{\begin{matrix}-u_1{}^2-u_2{}^2 &\text{ (index 2)\ \ \ \ \ \ }\\
\ \ u_1{}^2-u_2{}^2 &\text{ (index 1)\ \  or }\\
\ \ u_1{}^2+u_2{}^2& \text{ (index 0),\ \ \ \ \ }
\end{matrix} \right.$$
where $u_1$ and $u_2$ are smooth functions on $x$ and $y$. Near $(x_0,y_0,z_0)$, $x$ and $y$ are also smooth functions on $u_1$ and $u_2$ and, $(u_1,u_2)\mapsto(x,y)$ is the local inverse mapping of $(x,y)\mapsto (u_1,u_2)$.
See Figure \ref{fig:contour}.

\begin{figure}
\begin{center}
\includegraphics[width=6.cm]{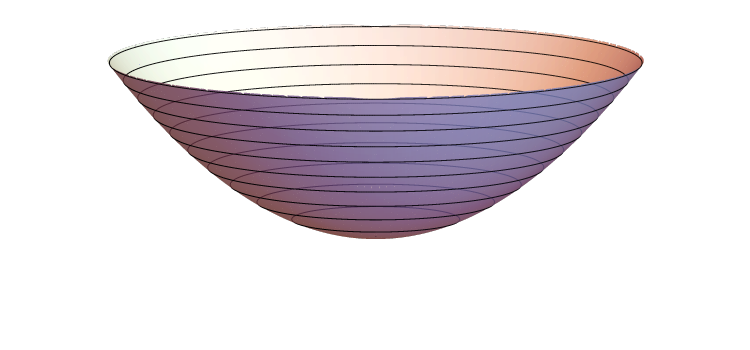} 
\includegraphics[width=6.cm]{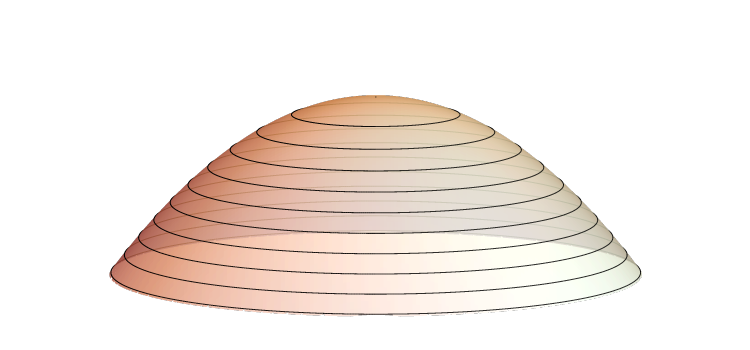}\vskip 2mm

\includegraphics[width=7.cm]{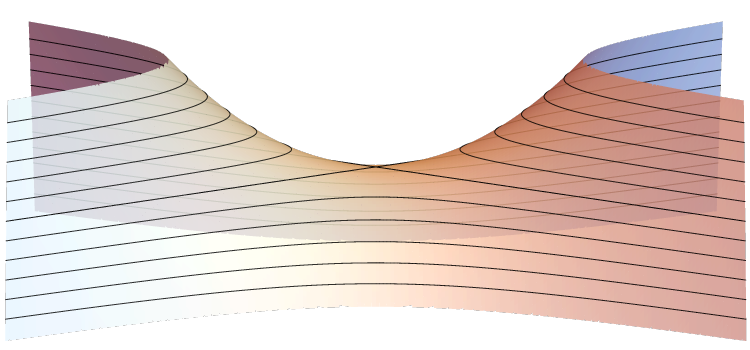}
\caption{Non degenerate critical points. The indice are 0 (upper left), 2 (upper right), 1 (lower center), respectively.}\label{fig:contour}
\end{center}
\end{figure}

For the Morse height function $F$ on $M$, we have the Reeb graphs of $F$ and $F|\bd M$. 
\begin{definition}
The Reeb graph $\mathcal{R}_F$ of $F:M\lra \RR$ is the quotient space $M/\sim $ of $M$ where an equivalence class is a connected component of the inverse image of a point. In other words, the Reeb graph $\mathcal{R}_{F}$ is obtained from $M$ by identifying each component of the level set to a point.
The Reeb graph $\mathcal{R}_{F|\bd M}$ of $F|\bd M:\bd M\lra \RR$ is defined in the same way.
\end{definition}
The following proposition helps to understand the shape of a bounded domain.
\begin{proposition}
For a bounded domain $M$ such that the projection $F$ to the $z$-axis is the Morse function, the inverse image $(F|\bd M)^{-1}(z)$ of a regular value $z\in \RR$ is a disjoint union of circles and the inverse image $F^{-1}(z)$ is the domain with boundary $(F|\bd M)^{-1}(z)$. A connected component of the inverse image $F^{-1}(z)$ is a 2-disk with the union of finitely many disjoint 2-disks in it deleted. 
The inclusion map of the boundary circles to a component of the inverse image $F^{-1}(z)$ induces $\mathcal{R}_{F|\bd M}\lra \mathcal{R}_{F}$.
\end{proposition}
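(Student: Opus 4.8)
The plan is to establish the three assertions in turn — that the boundary level set is a disjoint union of circles, that each component of the $3$-dimensional level set is a disk with holes, and that the inclusion $\bd M\subset M$ induces a map of Reeb graphs — using essentially only the regular value theorem and the classification of compact surfaces.

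For the first two assertions: since $\bd M$ is a closed surface and $z$ is a regular value of $F|\bd M$, the regular value theorem makes $(F|\bd M)^{-1}(z)$ a compact $1$-manifold without boundary, hence a finite disjoint union of circles. Moreover $F$ is the linear function $(x,y,z)\mapsto z$, whose differential never vanishes, so $F$ has no critical points at all on $M$; thus $z$ is a regular value of $F$, and of $F|\bd M$ by hypothesis. The regular value theorem for manifolds with boundary then shows that $F^{-1}(z)=M\cap P$, with $P$ the horizontal plane at height $z$, is a compact smooth surface with boundary, and that its boundary equals $(\bd M)\cap P=(F|\bd M)^{-1}(z)$. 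This is the second assertion.

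The heart of the matter is the topology of a connected component $C$ of $F^{-1}(z)$, regarded as a compact connected surface with boundary lying in the plane $P\subset S^2=P\cup\{\infty\}$, with boundary circles $\gamma_1,\dots,\gamma_k$ ($k\geq 1$). I would cut $S^2$ along $\bd C$, writing $S^2=C\cup C'$ with $C'=\overline{S^2\ssm C}$ a compact surface having the same boundary and no closed components; then $\chi(S^2)=\chi(C)+\chi(C')$, since the boundary circles contribute $0$. Combining this with the classification of compact orientable surfaces — each summand is of the form $\Sigma_{g,b}$ with $\chi=2-2g-b\leq 1$ and the $b$'s summing to $k$ — forces $C$ to have genus $0$ and $C'$ to be a disjoint union of exactly $k$ disks, one bounded by each $\gamma_i$. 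Exactly one of these disks contains $\infty$, say $D_\infty$; then $S^2\ssm\op{Int}(D_\infty)$ is a closed $2$-disk (by the same Euler-characteristic argument, or by the planar Schoenflies theorem, the one-dimension-lower analogue of Theorem~\ref{th:Alexander}), and deleting from it the remaining $k-1$ disjoint open disks recovers $C$. Hence $C$ is a $2$-disk with finitely many disjoint $2$-disks deleted. I expect this to be the one place needing a little care: running the Euler-characteristic bookkeeping cleanly, and using the point-set fact that $C$ and $\overline{S^2\ssm C}$ glue along $\bd C$ to give all of $S^2$.

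Finally, the inclusion $\iota\colon\bd M\hookrightarrow M$ satisfies $F\circ\iota=F|\bd M$, so it sends each connected component of a level set of $F|\bd M$ into a connected component of the corresponding level set of $F$; equivalently, $\iota$ is compatible with the two equivalence relations defining $\mathcal{R}_{F|\bd M}$ and $\mathcal{R}_F$. Hence $\bd M\overset{\iota}{\lra}M\lra\mathcal{R}_F$ is continuous and constant on equivalence classes of $\bd M$, and the universal property of the quotient $\bd M\lra\mathcal{R}_{F|\bd M}$ factors it through a continuous map $\mathcal{R}_{F|\bd M}\lra\mathcal{R}_F$. Over a regular level $z$, this map sends the point represented by a boundary circle $\gamma$ of $(F|\bd M)^{-1}(z)$ to the point represented by the unique component $C$ of $F^{-1}(z)$ having $\gamma$ among its boundary circles — exactly the description in the statement.
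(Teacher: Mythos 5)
Your argument is correct. Note that the paper itself states this proposition without proof, treating it as a standard observation, so there is no authorial argument to compare against; your write-up simply supplies the missing details. The three steps are the natural ones: the regular value theorem for $F|\bd M$ and for the pair $(M,\bd M)$ (using that the linear height function has no critical points on $M$, so transversality of the level plane to $\bd M$ is all that is needed), the planarity argument for a component $C$ of $F^{-1}(z)$, and the universal property of the quotient defining the Reeb graphs. Your Euler-characteristic bookkeeping in the middle step does close correctly: with $C$ connected with $k$ boundary circles and $\overline{S^2\ssm C}$ having $m\leqq k$ components, each with at least one boundary circle and the boundary circles totalling $k$, the identity $\chi(C)+\chi(\overline{S^2\ssm C})=2$ forces $m=k$ and all genera zero, so the complementary pieces are $k$ disks and capping off all but the one containing $\infty$ exhibits $C$ as a disk with $k-1$ open disks removed; alternatively one could quote the smooth Schoenflies theorem in the plane for each boundary circle, which is the more common shortcut. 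The final factorization through the quotient maps is exactly right and is all the last sentence of the statement asserts.
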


\begin{proposition}\label{prop:pi1}
The inclusion $\bd M\lra M$ induces the height preserving map $\mathcal{R}_{F|\bd M}\lra \mathcal{R}_{F}$, and we have the following commutative diagram:
$$\begin{CD} \bd M @>>> M\\
@VVV  @VVV\\
\mathcal{R}_{F|\bd M}@>>> \mathcal{R}_F \end{CD} $$ 
The vertical arrows induce surjections in their fundamental groups.
$$\begin{CD} \pi_1(\bd M) @>>> \pi_1(M)\\
@VVV  @VVV\\
\pi_1(\mathcal{R}_{F|\bd M})@>>> \pi_1(\mathcal{R}_F) \end{CD} $$ 
\end{proposition}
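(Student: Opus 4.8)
The plan is to verify the commutativity first and then establish the two surjectivity statements separately, the $\mathcal{R}$-level one being an easy consequence of the general structure of the quotient maps, and the $\pi_1$-level one being the substantive point. Commutativity of the first square is essentially a definition: a point $p\in\bd M$ maps under $F|\bd M$ to $F(p)$, and its class in $\mathcal{R}_{F|\bd M}$ is the component of $(F|\bd M)^{-1}(F(p))$ containing $p$; following the other way around, $p$ is sent into $M$ and then to the component of $F^{-1}(F(p))$ containing $p$; the map $\mathcal{R}_{F|\bd M}\to\mathcal{R}_F$ is precisely the one induced by ``the boundary component lies in this fiber component'', so both routes give the same point. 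That this map is height-preserving is immediate since all four maps in the square respect the $\RR$-coordinate (the Reeb graphs inherit a well-defined height function because $F$ and $F|\bd M$ are constant on the identified components).

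For the surjectivity of $\pi_1(\bd M)\to\pi_1(\mathcal{R}_{F|\bd M})$, I would argue that $\bd M\to\mathcal{R}_{F|\bd M}$ has connected fibers (each fiber of $F|\bd M$ over a point of the graph is by construction a single component, i.e.\ a circle or a point) and, more importantly, that a loop in $\mathcal{R}_{F|\bd M}$ can be lifted. Here I would use the Morse structure: away from the critical values the graph is locally an arc over which $\bd M$ is a trivial circle bundle, so a path in the graph lifts to a path on $\bd M$ (choosing a continuous section of the circle bundle); at a vertex corresponding to a critical point one checks directly from the local models (index $0$, $1$, $2$) that the incoming lifted arc can be continued through the vertex on $\bd M$. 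Concatenating lifts of the edges traversed by a loop $\gamma$ in $\mathcal{R}_{F|\bd M}$ produces a path in $\bd M$ whose endpoints lie in the same fiber component (a circle), hence can be joined inside that fiber; this gives a loop in $\bd M$ mapping to $\gamma$, proving surjectivity on $\pi_1$. The same argument applies verbatim to $M\to\mathcal{R}_F$, using that the fiber components of $F$ are planar domains (disks with subdisks removed, by the preceding proposition), which are connected, and that over a regular arc $F$ is a trivial bundle with such a planar fiber admitting a section; at the critical levels one again continues lifts through using the local handle pictures.

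Finally, once both vertical maps are $\pi_1$-surjective and the square commutes, the right vertical map $\pi_1(M)\to\pi_1(\mathcal{R}_F)$ is automatically surjective: given a loop in $\mathcal{R}_F$, the composite $\mathcal{R}_{F|\bd M}\to\mathcal{R}_F$ need not be surjective on $\pi_1$, so I cannot route through the bottom edge; instead I note that the right vertical map $M\to\mathcal{R}_F$ is itself a quotient by connected fibers with the local lifting property just described, so the very same lifting-of-loops argument gives surjectivity of $\pi_1(M)\to\pi_1(\mathcal{R}_F)$ directly. In other words the two surjectivity claims are two instances of the same lemma — ``a map with connected fibers that is a locally trivial fibration over the complement of finitely many points, and through which paths can be pushed across those points, is surjective on $\pi_1$'' — applied once to $F|\bd M$ and once to $F$.

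The main obstacle I anticipate is the bookkeeping at the critical levels: one must check that lifted paths genuinely extend through each type of vertex (and, for $\mathcal{R}_F$, through the vertices where two fiber components of $F$ merge or a component changes planar type), and that the choice of section over each regular edge can be made so the pieces fit together at the vertices. This is a finite check against the Morse local models of Figure~\ref{fig:contour}, but it is where the real content lies; the rest is formal.
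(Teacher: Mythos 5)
Your proposal is correct and follows essentially the same route as the paper: the paper's proof is exactly the observation that the fibers of $\bd M\lra \mathcal{R}_{F|\bd M}$ and $M\lra \mathcal{R}_F$ are path connected, so every loop in the Reeb graph lifts (up to homotopy) to a loop upstairs, giving the $\pi_1$-surjectivity; your edge-by-edge lifting over regular arcs and continuation through the critical vertices is just a more detailed implementation of that same lifting argument.
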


\begin{proof}
Since the fibers of $\bd M\lra \mathcal{R}_{F|\bd M}$ and $M\lra  \mathcal{R}_F$ are path connected,  any loop on $\mathcal{R}_{F|\bd M}$ and on $\mathcal{R}_F$ homotopically lifts to a loop in $\bd M$ and in $M$, respectively and 
 the vertical arrows in the $\pi_1$ diagram are surjections.
\end{proof}

Note that if the homomorphism $\pi_1(\bd M)\lra \pi_1(M)$ has nontrivial kernel, then there is a compression disk in $M$ and $M$ can be simplified using this compressing disk as we discussed in Section 3.  It can be surjective as in the case of handlebodies. For a knot complement $M$, $\pi_1(\bd M)\cong \ZZ\times \ZZ$ and if it is not injective the knot is a trivial knot. Hence usually the homomorphism is injective. Note also that the Reeb graphs $\mathcal{R}_{F|\bd M}$ and $\mathcal{R}_F$ are 1-dimensional and hence $\pi_1(\mathcal{R}_{F|\bd M})$ and $\pi_1(\mathcal{R}_F)$ are free groups. \\

By tracing the level sets $F^{-1}(z)$ for $z\in F(M)$ from $\op{min}(F(M))$ 
to  $\op{max}(F(M))$, we see the movement of $F^{-1}(z)$ with respect to the parameter $z\in F(M)$. The topology of $F^{-1}(z)$ and $F^{-1}((-\infty,z])$ changes only when $z$ is a critical value, and when $z$ passes through a critical value, 
$F^{-1}((-\infty,z])$ changes around the critical point  $(x_0,y_0,z_0)$ as   
$f^{-1}(\{v\leqq -\varepsilon\})$ to $f^{-1}(\{v\leqq +\varepsilon\})$ for 
$v=f(u_1,u_2)=\pm u_1{}^2\pm u_2{}^2$ in the $(u_1,u_2)$ coordinates.

\begin{figure}
\begin{center}
\includegraphics[width=5cm]{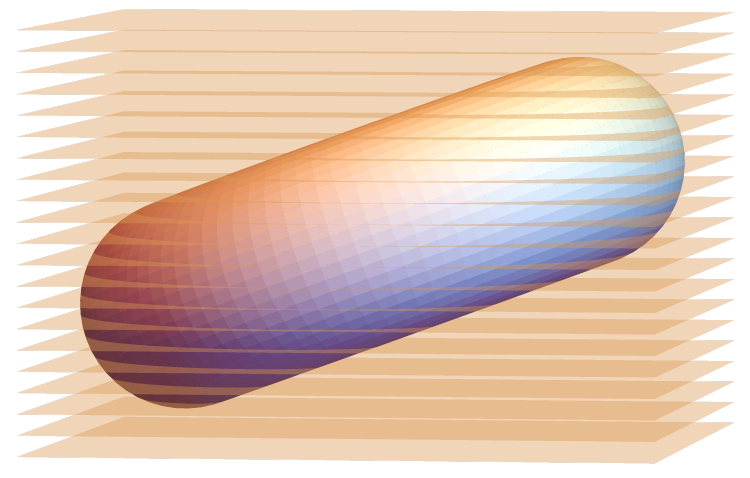}
\includegraphics[width=5cm]{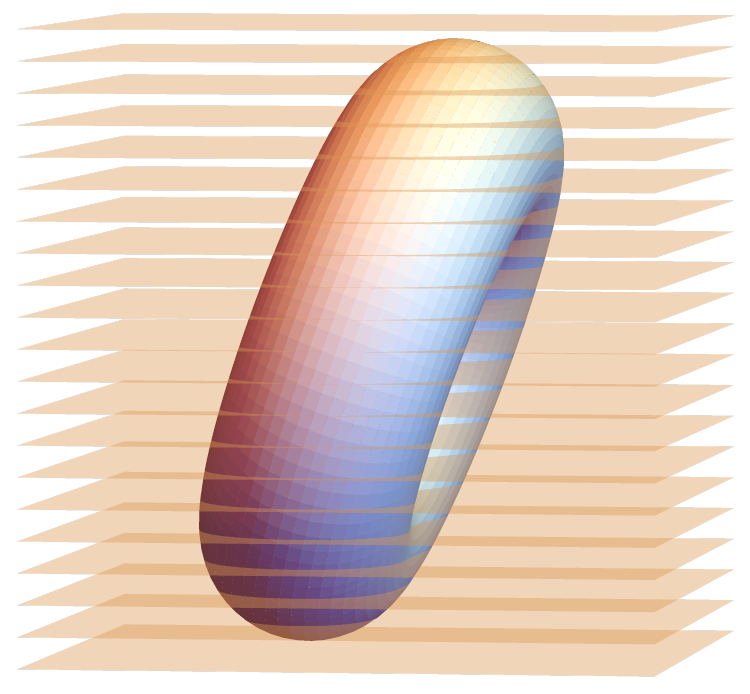}
\caption{Examples of Morse height functions on the 2-torus}\label{fig:torusPlanes}
\end{center}
\end{figure}

For example, for the solid torus $M$ which is obtained as the solid of revolution obtained from a disk in the half plane $\{(x,y,0)\in \RR^3\ \big|\ y>0\}$ by rotating around the $x$-axis, the orthogonal projection to the $z$-axis is the Morse function.
This Morse function has 4 critical values which are the images of critical points of index 0, index 1, index 1, and index 2 in this order. The inverse images of regular values varies as the empty set, one disk, two disks, one disk, and   
the empty set in this order. See the left figure in Figure \ref{fig:torusPlanes}.
For the solid torus $M'$ which is obtained as the solid of revolution obtained from a disk in $\{(x,0,z)\in \RR^3\ \big|\ x>0\}$ by rotating around the $z$-axis, the orthogonal projection to the $z$-axis is not the Morse function. For the solid torus $M''$ obtaind by tilting $M'$ a little,  
the orthogonal projection to the $z$-axis is the Morse function. This Morse function has 4 critical values which are the images of critical points of index 0, index 1, index 1, and index 2 in this order, as before, but  the inverse images of regular values varies as the empty set, one disk, one annulus, one disk, and   the empty set in this order. 
See the right figure in Figure \ref{fig:torusPlanes}.
For these two Morse height functions, their Reeb graphs $\mathcal{R}_{F|\bd M}\lra\mathcal{R}_F$ ($\mathcal{R}_{F|\bd M''}\lra\mathcal{R}_F$) and the inverse images are illustrated in Figure \ref{fig:torus} on the left and on the right.\\

\begin{figure}
\begin{center}
\includegraphics[height=4.cm]{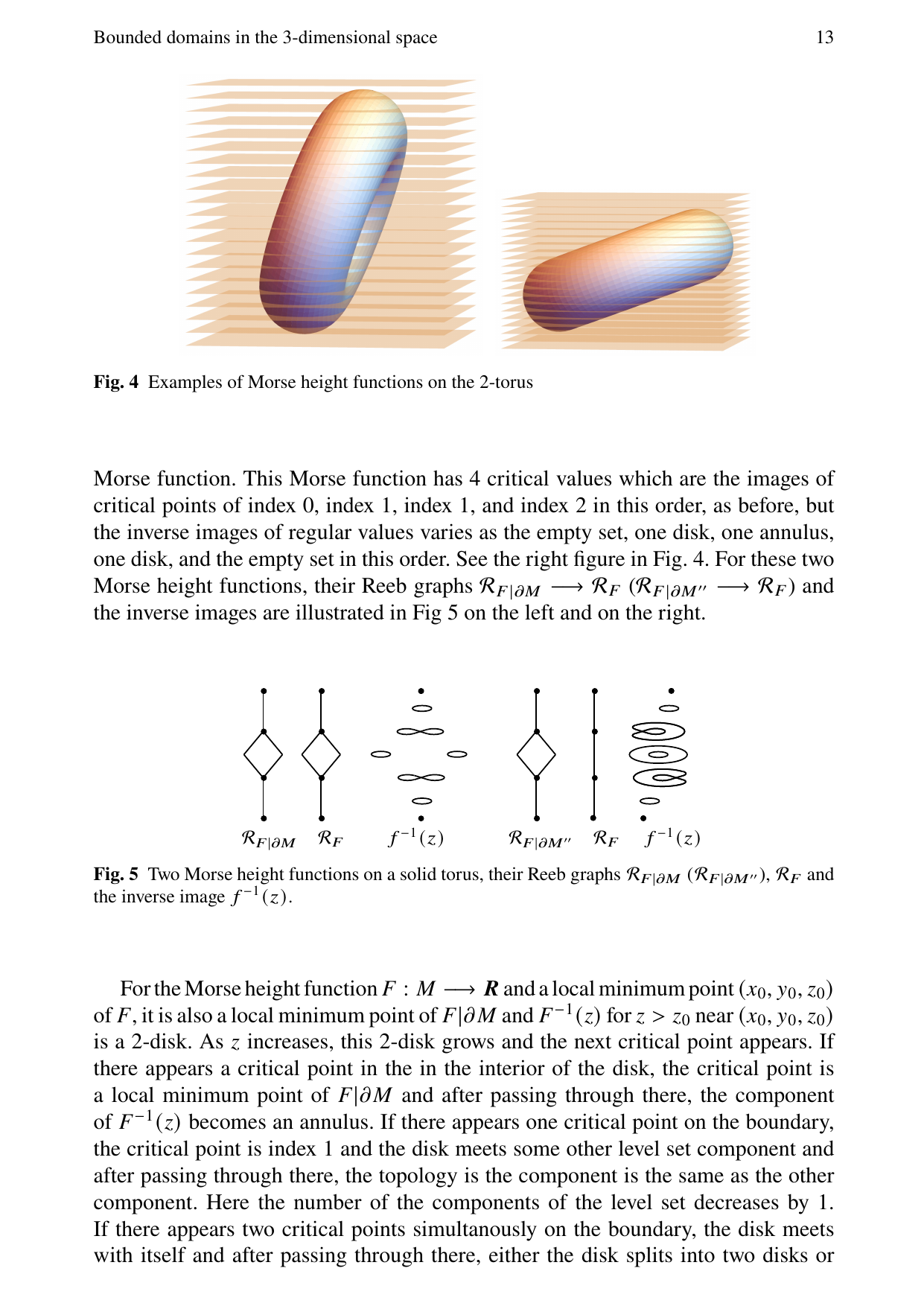}
\caption{Two Morse height functions on a solid torus, their Reeb graphs $\mathcal{R}_{F|\bd M}$ ($\mathcal{R}_{F|\bd M''}$), $\mathcal{R}_F$ and the inverse image $f^{-1}(z)$.}\label{fig:torus}
\end{center}
\end{figure}

For the Morse height function $F:M\lra \RR$ and a local minimum point $(x_0,y_0, z_0)$ of $F$, it is also a local minimum point of $F|\bd M$ and $F^{-1}(z)$ for $z>z_0$ near $(x_0,y_0, z_0)$ is a 2-disk. As $z$ increases, this 2-disk grows and the next critical point appears. If there appears a critical point in the in the interior of the disk, the critical point is a local minimum point of $F|\bd M$ and after passing through there, the component of $F^{-1}(z)$ becomes an annulus. If there appears one critical point on the boundary, the critical point is index 1 and the disk meets some other level set component and after passing through there, the topology is the component is the same as the other component. Here the number of the components of the level set decreases by 1. If there appears two critical points simultanously on the boundary, the disk meets with itself and after passing through there, either the disk splits into two disks or 
the component of $F^{-1}(z)$ becomes an annulus.
In this way, we see how the level set $F^{-1}(z)$ varies and obtain information on the shape of $M$. 

This process can be understood by the map $\mathcal{R}_{F|\bd M}\lra\mathcal{R}_F$ shown in Figure \ref{fig:bifurcate}. We should also notice that the figures illustrated upside down appear representing the Morse height functions made upside down. 

In the left figure of Figure \ref{fig:bifurcate}, in looking upwards, a disk splits into two disks. In the center figure and in the right figure of Figure \ref{fig:bifurcate}, in looking upward, a disk deforms to an annulus. 
The case where there appears one critical point on the boundary can be seen by the left figure of Figure \ref{fig:bifurcate} upside down.

\begin{figure}
\begin{center}
\includegraphics[height=4.5cm]{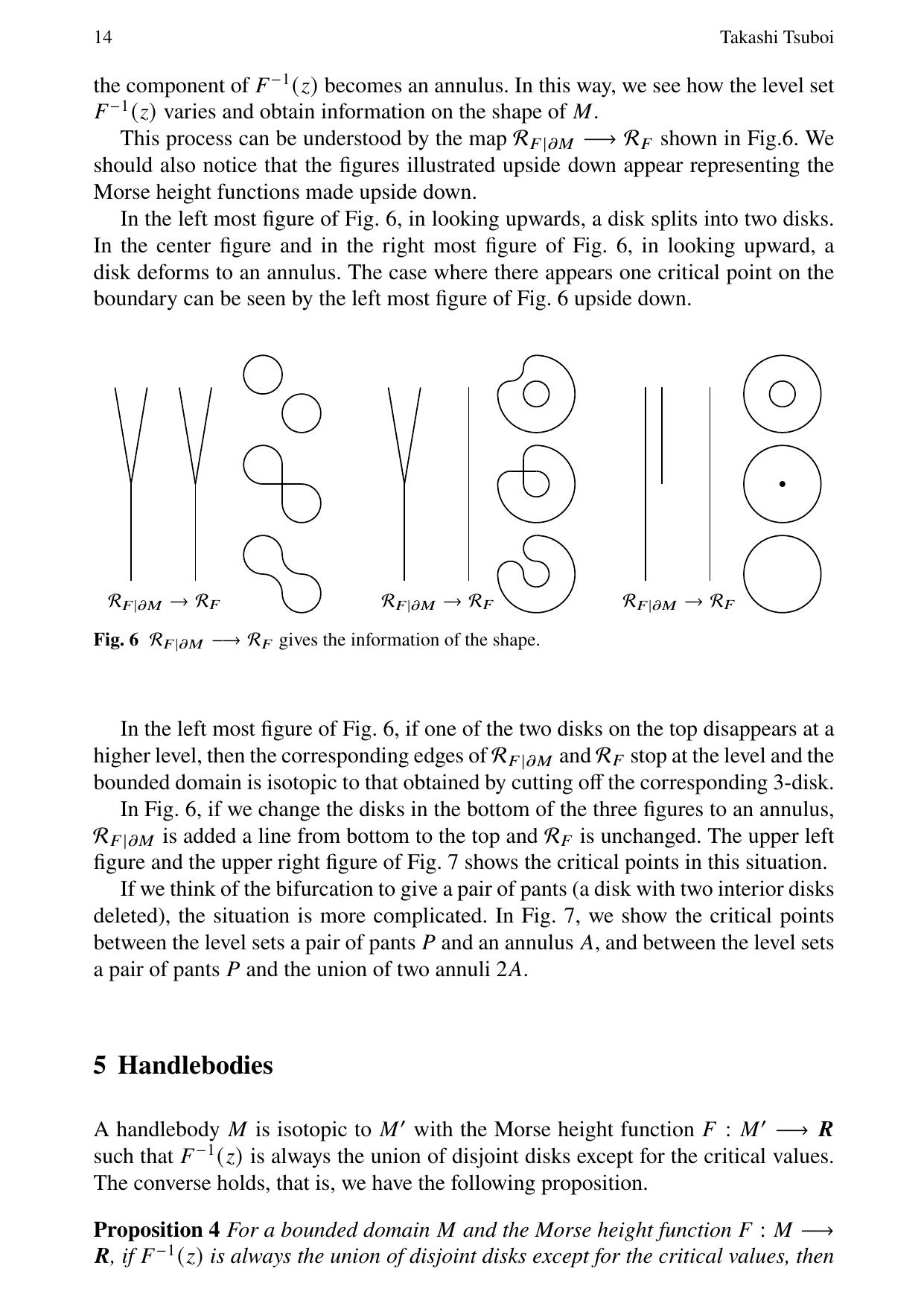}
\caption{$\mathcal{R}_{F|\bd M}\lra\mathcal{R}_F$ gives the information of the shape.}\label{fig:bifurcate}
\end{center}
\end{figure}

In the left figure of Figure \ref{fig:bifurcate}, if one of the two disks on the top disappears at a higher level, then the corresponding edges of $\mathcal{R}_{F|\bd M}$ and $\mathcal{R}_F$ stop at the level and the bounded domain is isotopic to that obtained by cutting off the corresponding 3-disk.

In Figure \ref{fig:bifurcate}, if we change the disks in the bottom of the three figures to an annulus, $\mathcal{R}_{F|\bd M}$ is added a line from bottom to the top and  $\mathcal{R}_{F}$ is unchanged. The upper left figure and the upper right figure of Figure \ref{fig:D2-2D2} shows the critical points in this situation.

If we think of the bifurcation to give a pair of pants (a disk with two interior disks deleted), the situation is more complicated. 
In  Figure \ref{fig:D2-2D2}, we show the critical points between the level sets 
a pair of pants $P$ and an annulus $A$, and between 
the level sets a pair of pants $P$ and the union of two annuli $2A$.

\begin{figure}
\begin{center}
\includegraphics[height=6.5cm]{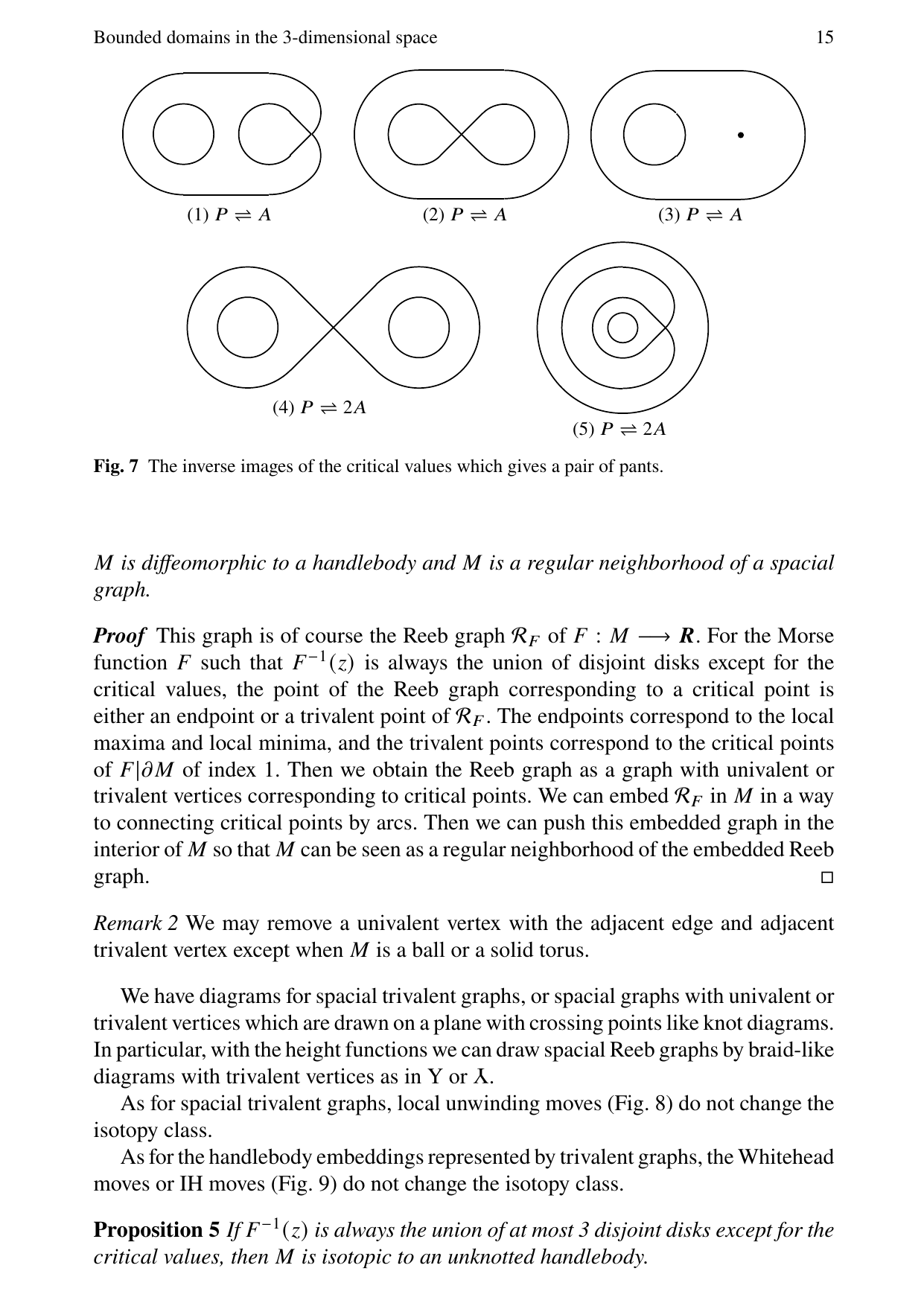}
\caption{The inverse images of the critical values which gives a pair of pants.}\label{fig:D2-2D2}
\end{center}
\end{figure}

\section{Handlebodies}
\label{sec:handlebodies}

A handlebody $M$ is isotopic to $M'$ with the Morse height function $F:M'\lra \RR$ such that $F^{-1}(z)$ is always the union of disjoint disks except for the critical values. The converse holds, that is, we have the following proposition.

\begin{proposition}
For a bounded domain $M$ and the Morse height function $F:M\lra \RR$,
if $F^{-1}(z)$ is always the union of disjoint disks except for the critical values, then $M$ is diffeomorphic to a handlebody and $M$ 
is a regular neighborhood of a spacial graph. 
\end{proposition}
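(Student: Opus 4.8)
The plan is to induct on the number of critical values of $F$ and track how the bounded domain is built up as we pass them from bottom to top, using the hypothesis that every regular level $F^{-1}(z)$ is a disjoint union of $2$-disks. First I would observe that at each critical value the index of the critical point of $F|\bd M$ is forced to be $0$ or $1$: an index $2$ point (local maximum of $F|\bd M$) at which a component of $F^{-1}(z)$ just below is a disk would, immediately below that value, have the level component be a disk with an interior disk deleted, i.e.\ an annulus, contradicting the hypothesis. More carefully, for each component $C$ of a regular level to remain a disk, passing a critical value may (i) create a new disk (index $0$, local min of $F|\bd M$ in the interior of nothing, i.e.\ a new sheet), (ii) merge two disks into one disk (index $1$ joining two components), or (iii) band a disk to itself splitting it into two disks (index $1$), and each of these is recorded exactly by the elementary moves of $\mathcal{R}_{F|\bd M}\lra\mathcal{R}_F$ depicted in Figure~\ref{fig:bifurcate}; the moves producing an annulus (Figures~\ref{fig:torus}, \ref{fig:D2-2D2}) are excluded.

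Next I would build the spacial graph $G$ directly from the Reeb graph $\mathcal{R}_F$. Since every fiber component of $F:M\lra\RR$ is a disk, $F$ is a ``disk bundle over $\mathcal{R}_F$'' in the rough sense that $M$ deformation retracts onto a section; concretely, choose in each disk component of each regular level its center, and over each edge of $\mathcal{R}_F$ (an interval of regular values with a single component persisting) take the union of these centers — an arc in $M$. At a vertex of $\mathcal{R}_F$ of valence $\geq 2$, index $1$ in our list, the local picture of Figure~\ref{fig:bifurcate} shows the persisting-disk centers from the incident edges all converge to a single point of the critical level, so these arcs fit together into an embedded graph $G\subset \op{Int}(M)$ whose underlying space maps homeomorphically onto $\mathcal{R}_F$. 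The claim is then that $M$ is a regular neighborhood of $G$.

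To prove that claim, I would run the induction: let $z_0<z_1<\dots<z_k$ be the critical values, pick regular values $t_i\in(z_i,z_{i+1})$, and show by induction on $i$ that $F^{-1}((-\infty,t_i])$ is a regular neighborhood in itself of the subgraph $G_i=G\cap F^{-1}((-\infty,t_i])$, hence a handlebody (a regular neighborhood of a graph is a boundary connected sum of solid tori and balls, i.e.\ a handlebody, its genus being $1-\chi(G_i)$ on each component). The base case is a collection of balls (around the index $0$ points). For the inductive step, passing $z_{i+1}$ we attach to $F^{-1}((-\infty,t_i])$ a single $1$-handle or $0$-handle (a ball) along $F^{-1}(t_i)$ exactly as described in Section~\ref{sec:3} and in the bifurcation analysis above: attaching a $1$-handle to a regular neighborhood of a graph along two disks in its boundary yields a regular neighborhood of the graph with an edge added, and it does not matter how knotted or linked the core arc of that handle sits, since a regular neighborhood of \emph{any} embedded graph in $\RR^3$ is a handlebody. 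In particular $F^{-1}((-\infty,t_k])=M$ is a regular neighborhood of $G=G_k$, so $M$ is an embedded handlebody, of genus $1-\chi(\mathcal{R}_F)=b_1(\mathcal{R}_F)$.

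The main obstacle I expect is the bookkeeping at the critical levels: one must verify carefully that no index $2$ critical point can occur and that the only index $1$ events are ``merge two disks'' and ``self-band a disk into two disks,'' and in particular that an index $1$ self-band \emph{cannot} turn a disk into an annulus under the standing hypothesis — this is the point where Figures~\ref{fig:bifurcate} and \ref{fig:D2-2D2} do the real work, and making it rigorous requires a local model of $F$ near the critical level (via the Morse Lemma normal form already set up in Section~\ref{sec:Morse_height_function}) together with the observation that the two ends of the banding arc lie on the same disk component for the self-band case and on different ones for the merge case, an observation one reads off from the Reeb graph $\mathcal{R}_{F|\bd M}\lra\mathcal{R}_F$. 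Once that local trichotomy is nailed down, the global induction and the identification of $M$ with a regular neighborhood of $G$ are routine.
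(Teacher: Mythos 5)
Your overall route is the paper's: take the Reeb graph $\mathcal{R}_F$ as the spine, embed it in $M$, and exhibit $M$ as a regular neighborhood of that embedded graph; the paper does this in one stroke (connect the critical points by arcs, push the graph into the interior), while your induction over critical values with $0$- and $1$-handles is an expanded version of the same idea, and your construction of $G$ from disk centers is fine modulo the usual smoothing near critical levels. However, there is a genuine error in your local analysis: you claim the hypothesis forces every critical point of $F|\bd M$ to have index $0$ or $1$, and later that ``no index $2$ critical point can occur.'' That is impossible: $F$ attains its maximum on the compact set $M$ at a boundary point, and that point is a local maximum of $F|\bd M$, i.e.\ an index $2$ critical point. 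What the hypothesis actually excludes are the \emph{concave} index $2$ points (outward normal pointing downward), where the level component just below the critical value acquires a hole and becomes an annulus --- that is the computation you ran, but it only applies in the concave case --- and, symmetrically, concave index $0$ points, which your list also does not rule out. The convex index $2$ points do occur, are exactly the endpoints of $\mathcal{R}_F$ at which a disk component of the level set shrinks and disappears (the paper's proof explicitly says the endpoints correspond to the local maxima and minima), and they are missing from your trichotomy (i)--(iii); as written, your induction never caps off the top of $M$, so it never reaches $M$ itself.

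The repair is easy and leaves your structure intact: passing a convex index $2$ value glues a $3$-ball onto $F^{-1}((-\infty,t_i])$ along a disk in its top boundary, which changes neither the diffeomorphism type nor the statement that the sublevel set is a regular neighborhood of $G_i$; and the concave index $0$ and index $2$ points are excluded by the annulus argument you already gave. With that correction (and the corresponding four-case list: convex minimum births a ball, index $1$ merges or splits disks, convex maximum caps a ball, concave extrema forbidden), your induction and the identification of $M$ with a regular neighborhood of the embedded Reeb graph go through, and the argument is essentially the paper's proof made explicit.
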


\begin{proof}
This graph is of course the Reeb graph $\mathcal{R}_F$ of $F:M\lra \RR$. For the Morse function $F$ such that $F^{-1}(z)$ is always the union of disjoint disks except for the critical values, the point of the Reeb graph corresponding to a critical point is either an endpoint or a trivalent point of $\mathcal{R}_F$.
The endpoints correspond to the local maxima and local minima, and the trivalent points correspond to the critical points of $F|\bd M$ of index 1.
Then we obtain the Reeb graph as a graph with univalent or trivalent vertices corresponding to critical points. We can embed $\mathcal{R}_F$ in $M$ in a way to connecting critical points by arcs. Then we can push this embedded graph in the interior of $M$ so that $M$ can be seen as a regular neighborhood of the embedded Reeb graph.
\end{proof}

\begin{remark}
We may remove a univalent vertex with the adjacent edge and adjacent trivalent vertex except when $M$ is a ball or a solid torus. 
\end{remark}

We have diagrams for spacial trivalent graphs, or spacial graphs with univalent or trivalent vertices which are drawn on a plane with crossing points like knot diagrams. In particular, with the height functions we can draw spacial Reeb graphs by braid-like diagrams with trivalent vertices  as in Y or \rotatebox[origin=c]{180}{Y}.  

As for spacial trivalent graphs, local unwinding moves (Figure \ref{fig:unwind}) do not change the isotopy class.
\begin{figure}
\begin{center}
\includegraphics[height=3.cm]{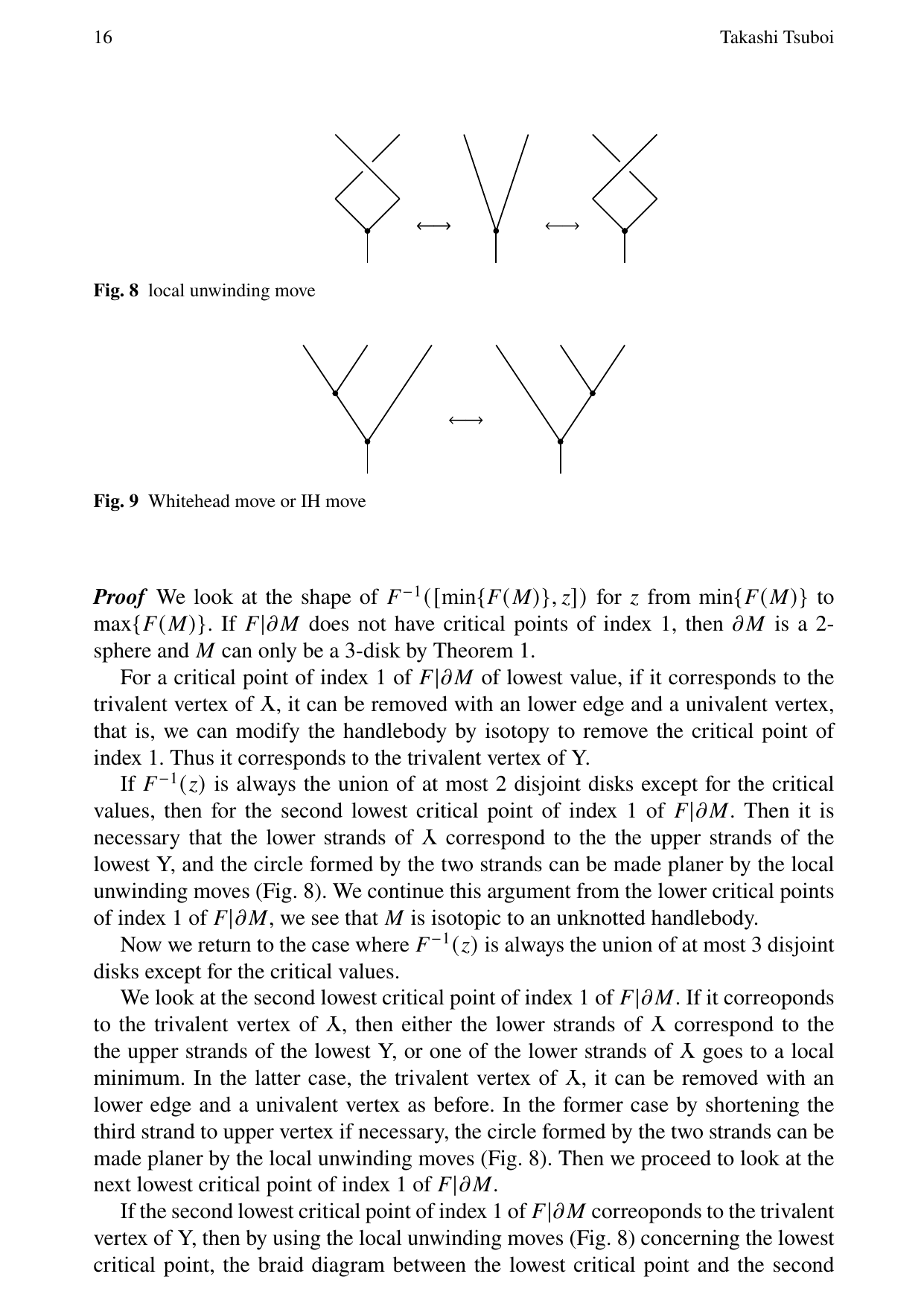}
\caption{local unwinding move}\label{fig:unwind}
\end{center}
\end{figure}

As for the handlebody embeddings represented by trivalent graphs, the Whitehead moves or IH moves  (Figure \ref{fig:WhiteheadMove}) do not change the isotopy class.
\begin{figure}
\begin{center}
\includegraphics[height=3.cm]{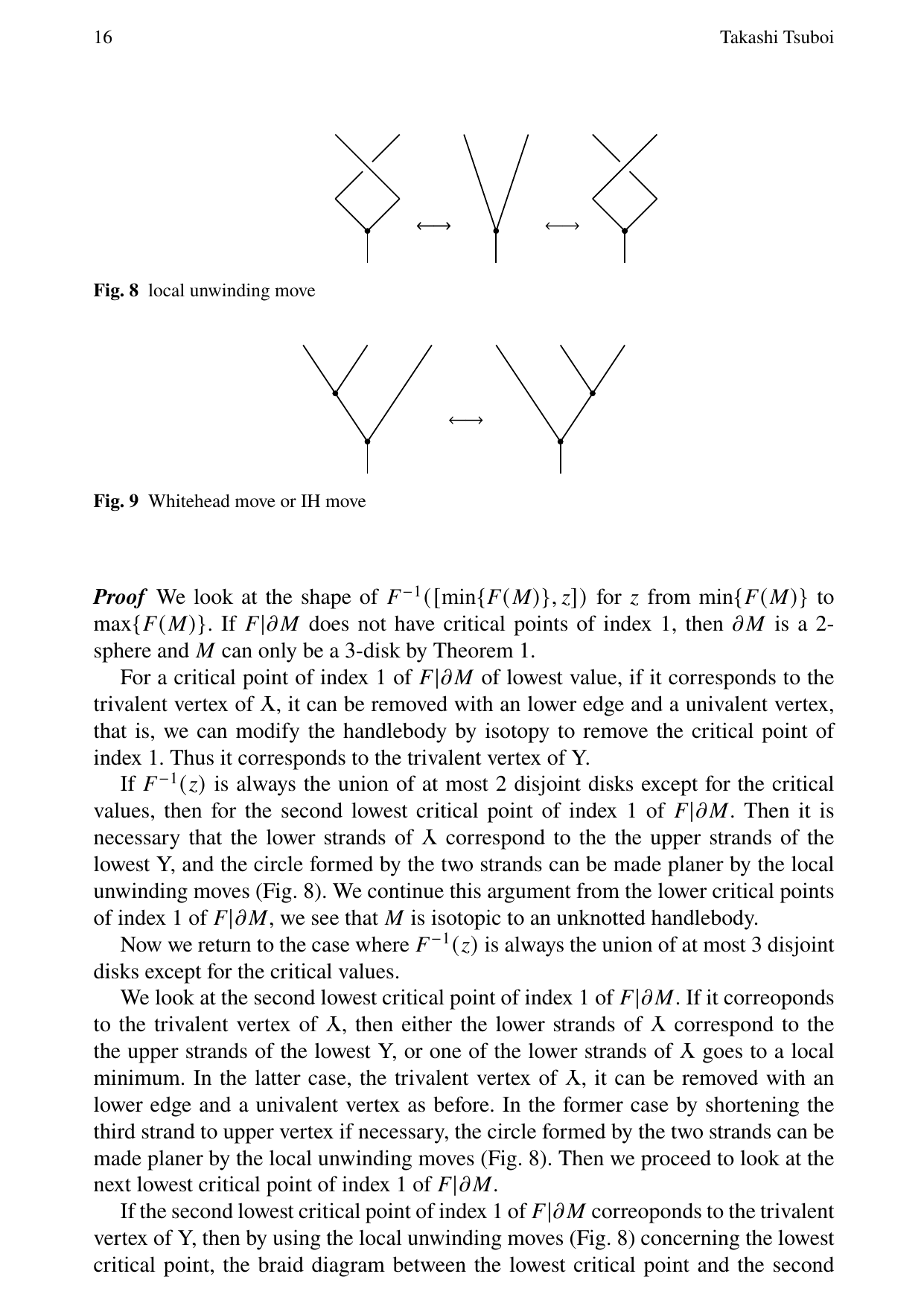}
\caption{Whitehead move or IH move}\label{fig:WhiteheadMove}
\end{center}
\end{figure}

\begin{proposition}
If $F^{-1}(z)$ is always the union of at most 3 disjoint disks except for the critical values, then $M$ is isotopic to an unknotted handlebody.
\end{proposition}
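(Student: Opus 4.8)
The plan is to invoke the preceding proposition, which already tells us that $M$ is diffeomorphic to a handlebody and is a regular neighborhood of its Reeb graph $\mathcal{R}_F$; since a regular neighborhood of a \emph{planar} graph is an unknotted handlebody, it suffices to prove that $\mathcal{R}_F$ can be ambient isotoped in $\RR^3$ (equivalently in $S^3$) to a planar graph. Using the height function, $\mathcal{R}_F$ comes presented as a braid-like diagram on at most three strands: a regular level carries $\le 3$ points (the $\le 3$ disks of $F^{-1}(z)$), local minima/maxima are univalent vertices, and index-$1$ critical points of $F|\bd M$ are trivalent vertices of type $Y$ (one edge below, two above) or its upside-down version. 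The available moves are ambient isotopy, the local unwinding moves of Figure \ref{fig:unwind}, the Whitehead/IH moves of Figure \ref{fig:WhiteheadMove}, and modifications of the height function (interchanging the heights of independent critical points). The goal is to reduce the diagram, via these moves, to one with no crossings.

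First I would handle the parts of the diagram supported on at most two strands. Between two consecutive critical levels at which only $\le 2$ strands are present, the strands can only braid by (half or full) twists; after possibly creating a univalent vertex, such twists merely change the framing of a band and are removed by the unwinding moves. Hence every maximal sub-diagram carried by $\le 2$ strands is standard, and in particular the genus-$\le 1$ cases (the pictures already seen in Figures \ref{fig:bifurcate} and the torus examples) are clearly unknotted; these constitute the base of an induction. The induction is then on the number of critical levels at which all three strands are simultaneously present — equivalently, after the two-strand simplifications, on the number of trivalent vertices of $\mathcal{R}_F$.

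For the inductive step I would work in ``thin'' position: among all Morse height functions on representatives of the isotopy class of $M$ whose level sets are unions of $\le 3$ disks, choose one with the minimum number of critical points. At each regular level order the $\le 3$ disks from left to right by $x$-coordinate; a crossing in the diagram is a place where two adjacent strands exchange order. The claim is that in this minimal position no such crossing can be essential: because there are never more than three strands, a crossing can be slid up or down until it reaches a trivalent or univalent vertex, where it is absorbed by an IH move or a vertex slide, or cancelled against an oppositely-signed crossing, each time reducing the count and contradicting minimality. Carrying this out reduces the number of three-strand levels, completes the induction, and exhibits $\mathcal{R}_F$ as a planar ``chain of loops.''

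The main obstacle is precisely the last step: proving that genuine three-strand braiding is inessential in this setting. The three-strand braid group is large and already realizes knotted closed braids, so the hypothesis ``$\le 3$ disks'' must be used in an essential way, and indeed the statement fails for $\ge 4$ disks. The technical heart is therefore an \emph{unbraiding lemma}: any three-strand braid appearing between two critical levels of $\mathcal{R}_F$ becomes trivial once the trivalent vertices immediately below and above it are allowed to participate, via IH moves and vertex slides. Isolating this lemma and proving it carefully — keeping precise track of how the (at most) three disks are positioned in each horizontal plane and how the strands are permitted to pass each other when a third strand is momentarily absent — is where the real work lies.
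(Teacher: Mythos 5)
Your reduction to showing that the Reeb graph $\mathcal{R}_F$, presented as a braid-like diagram on at most three strands with univalent and trivalent vertices, can be made planar by ambient isotopy, unwinding moves and Whitehead/IH moves is exactly the right framing, and your treatment of the two-strand portions matches the paper's. But the step you yourself flag as ``where the real work lies'' --- the unbraiding lemma for three strands --- is precisely the content of the proposition, and your proposal does not supply it. The thin-position argument you sketch does not close it: minimizing the number of critical points of the height function says nothing about crossings of the diagram (a crossing is not a critical point, so removing or creating one does not change the quantity you minimized, and ``contradicting minimality'' has no force); and the claim that a crossing slid into a trivalent vertex is ``absorbed by an IH move or a vertex slide'' is not automatic --- an IH move changes which pairs of strands meet at the vertex, it does not by itself cancel a half-twist, so asserting absorption is just restating the lemma.

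For comparison, the paper proves the unbraiding step constructively, working from the bottom of the diagram upward. The lowest index-1 critical point can be arranged to be a Y vertex (an inverted Y at the bottom cancels against a local minimum), so its two upper strands close up into a circle below; local unwinding moves (Figure \ref{fig:unwind}) around this circular part remove any twisting of those two strands, and Whitehead moves (Figure \ref{fig:WhiteheadMove}) let one re-route which strands emerge from the loop, normalizing the picture at the lowest vertices as in Figure \ref{fig:Lowest3Move}. The hypothesis of at most three disks is then used through the algebraic fact that the braid group on three strands is generated by $\sigma_{12}$ and $\sigma_{23}$: each generator occurring in the braid between consecutive critical levels can be undone by a combination of unwinding around the bottom circle and Whitehead moves, so the whole three-strand braid is trivialized level by level, and the induction you set up goes through. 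If you want to complete your write-up, you should replace the minimality argument by this explicit generator-by-generator trivialization (or an equivalent precise argument); as it stands the proposal assumes the key lemma rather than proving it.
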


\begin{proof}
We look at the shape of $F^{-1}([\min\{F(M)\},z])$  for $z$ from $\min\{F(M)\}$ to $\max\{F(M)\}$. If $F|\bd M$ does not have critical points of index 1, then 
$\bd M$ is a 2-sphere and 
$M$ can only be a 3-disk by Theorem \ref{th:Alexander}. 

For a critical point of index 1 of $F|\bd M$ of lowest value, if it corresponds to the trivalent vertex of \rotatebox[origin=c]{180}{Y}, it can be removed with an lower edge and a univalent vertex, that is, we can modify the handlebody by isotopy to remove the critical point of index 1. Thus it corresponds to the trivalent vertex of Y. 

If $F^{-1}(z)$ is always the union of at most 2 disjoint disks except for the critical values, then for the second lowest critical point of index 1 of $F|\bd M$. Then it is necessary that the lower strands of \rotatebox[origin=c]{180}{Y} correspond to the the upper strands of the lowest Y,
and the circle formed by the two strands can be made planer by the local unwinding moves (Figure \ref{fig:unwind}). We continue this argument from the lower critical points of index 1 of $F|\bd M$, we see that  $M$ is isotopic to an unknotted handlebody.

Now we return to the case where $F^{-1}(z)$ is always the union of at most 3 disjoint disks except for the critical values. 

We look at the second lowest critical point of index 1 of $F|\bd M$.
If it correoponds to the trivalent vertex of \rotatebox[origin=c]{180}{Y}, then either the lower strands of \rotatebox[origin=c]{180}{Y} correspond to the the upper strands of the lowest Y, or one of the lower strands of \rotatebox[origin=c]{180}{Y} goes to a local minimum. In the latter case, the trivalent vertex of \rotatebox[origin=c]{180}{Y}, it can be removed with an lower edge and a univalent vertex as before. In the former case by shortening the third strand to upper vertex if necessary, the circle formed by the two strands can be made planer by the local unwinding moves (Figure \ref{fig:unwind}). Then we proceed to look at the next lowest critical point of index 1 of $F|\bd M$.

If the second lowest critical point of index 1 of $F|\bd M$ correoponds to the trivalent vertex of Y, then by using the local unwinding moves (Figure \ref{fig:unwind}) concerning the lowest critical point, the braid diagram between the lowest critical point and the second lowest critical point can be modified to the left diagram or the right diagram of Figure \ref{fig:WhiteheadMove}. Then we look at the third lowest critical point of index 1 of $F|\bd M$. This critical point necessarily correoponds to the trivalent vertex of \rotatebox[origin=c]{180}{Y}.
We have a braid of 3 strands between the second lowest critical point and the third lowest critical point. Since the braid group of 3 strands is generated by $\sigma_{12}$ and $\sigma_{23}$,  by the combination of the Whitehead moves (Figure \ref{fig:WhiteheadMove}) and the local unwinding moves (Figure \ref{fig:unwind}), we can make the braid of 3 strands trivial. Then we use the Whitehead moves to make the circular part of the graph to the simple position as in the diagram on the right of Figure \ref{fig:Lowest3Move}. 
Then we proceed to look at the next lowest critical point of index 1 of $F|\bd M$.

\begin{figure}
\begin{center}
\includegraphics[height=3.cm]{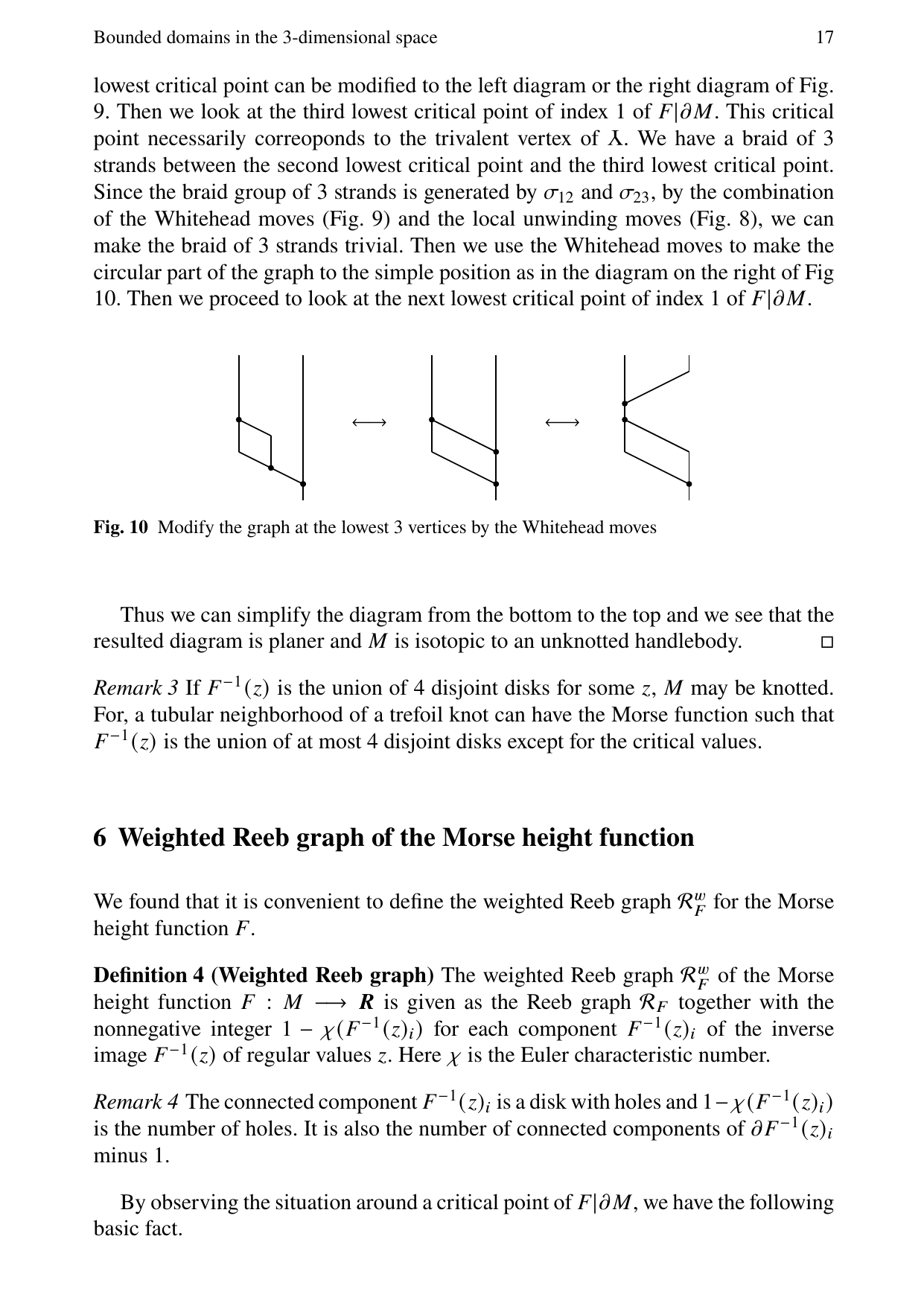}
\caption{Modify the graph at the lowest 3 vertices by the Whitehead moves}\label{fig:Lowest3Move}
\end{center}
\end{figure}

Thus we can simplify the diagram from the bottom to the top and we see that 
the resulted diagram is planer and $M$ is isotopic to an unknotted handlebody.
\end{proof}

\begin{remark}
If $F^{-1}(z)$ is the union of 4 disjoint disks for some $z$, $M$ may be knotted. For, a tubular neighborhood of a trefoil knot can have the Morse function such that $F^{-1}(z)$ is the union of at most 4 disjoint disks except for the critical values.
\end{remark}

\section{Weighted Reeb graph of the Morse height function}
\label{sec:weighted_Reeb_Graph}

We found that it is convenient to define the weighted Reeb graph $\mathcal{R}_{F}^w$ for the Morse height function $F$. 

\begin{definition}[Weighted Reeb graph] The weighted Reeb graph
$\mathcal{R}_{F}^w$ of the Morse height function $F:M\lra \RR$ is given as the Reeb graph $\mathcal{R}_{F}$ together with the nonnegative integer $1-\chi(F^{-1}(z)_i)$ for each component $F^{-1}(z)_i$ of the inverse image $F^{-1}(z)$ of regular values $z$. Here $\chi$ is the Euler characteristic number.
\end{definition}

\begin{remark} 
The connected component $F^{-1}(z)_i$ is a disk with holes and $1-\chi(F^{-1}(z)_i)$ is the number of holes. It is also the number of connected components of $\bd F^{-1}(z)_i$ minus 1.
\end{remark}

By observing the situation around a critical point of $F|\bd M$, we have the following basic fact.

\begin{proposition}\label{prop:weight}
For the Morse height function $F:M\lra \RR$ on the bounded domain $M$, the weights on the edges of the weighted Reeb graph $\mathcal{R}_F$ can change at critical values in the following way:
\begin{itemize}
\item[(1)] 
At the critical value $z$ where the number of connected components of $F^{-1}(z)$ changes, the point in the Reeb graph $\mathcal{R}_F$ causing this change is a trivalent point. Then the sum of the weights of 2 edges in the upper or lower direction is equal to that of the third edge. This corresponds to connecting or disconnecting boundary components of distinct connected components of $F^{-1}(z)$. 
\item[(2)]
At the critical value $z$ where the number of connected components of $F^{-1}(z)$ does not change, there is either a trivalent point or an endpoint of the Reeb graph $\mathcal{R}_{F|\bd M}$ at this level. 
This corresponds either to connecting or disconnecting boundary components of one connected component of $F^{-1}(z)$ or to create or erase a boundary at an interior point. 
\end{itemize}
\end{proposition}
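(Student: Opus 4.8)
The plan is to localise the statement at a single critical value and read off everything from the Morse normal forms recalled in Section \ref{sec:Morse_height_function}. After an arbitrarily small perturbation I may assume distinct critical points of $F|\bd M$ lie over distinct critical values (a non-generic critical value carrying several critical points is handled by treating its critical points one at a time, since their effects are supported in disjoint charts). So fix a critical value $z_0$ with unique critical point $p\in\bd M$ of index $\lambda\in\{0,1,2\}$ and pick regular values $z_-<z_0<z_+$ with no critical value in between. Take a small ball neighbourhood $V\ni p$ adapted to the Morse chart $(u_1,u_2,v)$, so that outside $V$ the level sets $F^{-1}(z_-)$ and $F^{-1}(z_+)$ are identified by a gradient-like flow; write $C_0:=F^{-1}(z_\pm)\ssm\op{Int}(V)$ for this common part, a fixed planar surface carrying a fixed family of cut arcs on $\bd V$. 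Inside $V$ we have $\bd M\cap V=\{v=\varepsilon_1u_1^2+\varepsilon_2u_2^2\}$ and $M$ on one of the two sides $\{v\leqq Q\}$, $\{v\geqq Q\}$; I would organise the proof by the pair (index, side), using the up-down flip $v\mapsto-v$ to identify each case with one of opposite index/side, so that only three genuinely distinct local pictures remain.

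Next I would run the local computation. For $\lambda=0$: on one side $F^{-1}(z)\cap V$ is empty below $z_0$ and a disk above, so a new disk component is born — a univalent vertex of $\mathcal{R}_F$ where a weight-$0$ edge starts and no existing weight changes; on the other side $F^{-1}(z)\cap V$ is a full disk below and a disk with a smaller disk removed above, so a new inner boundary circle is created inside one component — a bottom endpoint of $\mathcal{R}_{F|\bd M}$ at which that component's weight $1-\chi$ goes up by $1$. The case $\lambda=2$ is the up-down mirror: either a disk component dies (univalent vertex of $\mathcal{R}_F$, top of a weight-$0$ edge) or an inner circle is capped off (top endpoint of $\mathcal{R}_{F|\bd M}$, weight down by $1$). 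For $\lambda=1$ the model is a band (saddle) move: for one choice of side $F^{-1}(z)\cap V$ is a single rectangular band joining two cut arcs $a_+,a_-\subset\bd V$ below $z_0$, and two disks capping $a_+$ and $a_-$ separately above $z_0$ (the other side is the same picture with $z_-,z_+$ exchanged). Globally, $F^{-1}(z)$ therefore changes from $C_0$ with a $1$-handle attached along $a_+\cup a_-$ to $C_0$ itself, up to capping boundary arcs, which does not alter the homeomorphism type.

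Finally I would dispatch the two cases by the dichotomy: do $a_+$ and $a_-$ lie on one connected component of $C_0$, or on two distinct ones? If they lie on distinct components $C_0^1\ni a_+$ and $C_0^2\ni a_-$, then the handle connects them: on one side of $z_0$ the level set has, at this place, a single component $C$ with $\chi(C)=\chi(C_0^1)+\chi(C_0^2)-1$, and on the other side it has the two components $C_0^1,C_0^2$ (capped, hence with unchanged Euler characteristic); so the number of components of $F^{-1}(z)$ changes by $1$, the responsible point of $\mathcal{R}_F$ is trivalent, and $1-\chi(C)=(1-\chi(C_0^1))+(1-\chi(C_0^2))$ is precisely ``weight of the one edge $=$ sum of the weights of the two edges'', the ``upper or lower'' alternative recording whether the handle is below or above — this is case (1). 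If instead $a_+,a_-$ lie on a single component $C_0'$, then, because every level-set component is planar, they must lie on the same boundary circle of $C_0'$: a band joining two distinct boundary circles of a connected surface would raise the genus by $1$, impossible here. Attaching the handle then splits that boundary circle into two (equivalently, read the other way, merges two into one), leaving the component connected; so the number of components of $F^{-1}(z)$ is unchanged, the number of boundary circles of the affected component changes by $1$ — a trivalent point of $\mathcal{R}_{F|\bd M}$ — and correspondingly its weight $1-\chi$ changes by $\pm1$. Together with the ``hole'' cases from $\lambda=0,2$ (endpoints of $\mathcal{R}_{F|\bd M}$) this is case (2).

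The part I expect to be delicate is entirely bookkeeping rather than substance: correctly pinning down which side $M$ lies on in each local model, and above all justifying the planarity input (from the earlier Proposition) that forbids creating genus, since that is exactly what forces ``$a_+,a_-$ on the same component'' to mean ``$a_+,a_-$ on the same boundary circle'' and thereby makes the weight change in case (2) be exactly $\pm1$ and the additivity in case (1) exact. A secondary point worth stating carefully is that births and deaths of disk components, though they do change the number of components of $F^{-1}(z)$, only add or delete a weight-$0$ edge and hence change no weight, which is why the statement phrases (1) in terms of the trivalent point at which a weight actually changes.
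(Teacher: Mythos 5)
Your argument is correct and is exactly the local observation at critical points that the paper invokes without writing out (the proposition is stated there as a basic fact obtained ``by observing the situation around a critical point of $F|\bd M$''). Your treatment of the two delicate points --- planarity of the level-set components forcing the two attaching arcs in case (2) to lie on the same boundary circle, so the weight changes by exactly $\pm1$ and the additivity in case (1) is exact, and the reading of disk births/deaths as added or deleted weight-$0$ edges so that (1) concerns only the trivalent points where weights actually change --- supplies precisely what the paper leaves implicit.
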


We can put more information read from the map $\mathcal{R}_{F|\bd M}\lra\mathcal{R}_F$ on the vertices of the weighted Reeb graph $\mathcal{R}_F^w$. That is the indices of the critical points. 

\begin{definition}[weighted indexed Reeb graph]
The weighted indexed Reeb graph $\mathcal{R}_F^{wi}$ for the Morse height function $F:M\lra \RR$ is given by the weighted Reeb graph $\mathcal{R}_F^{w}$ with the vertices correoponding to the critical points  being marked with the indices of the critical points. 
\end{definition}

\begin{figure}
\begin{center}
\includegraphics[height=8cm]{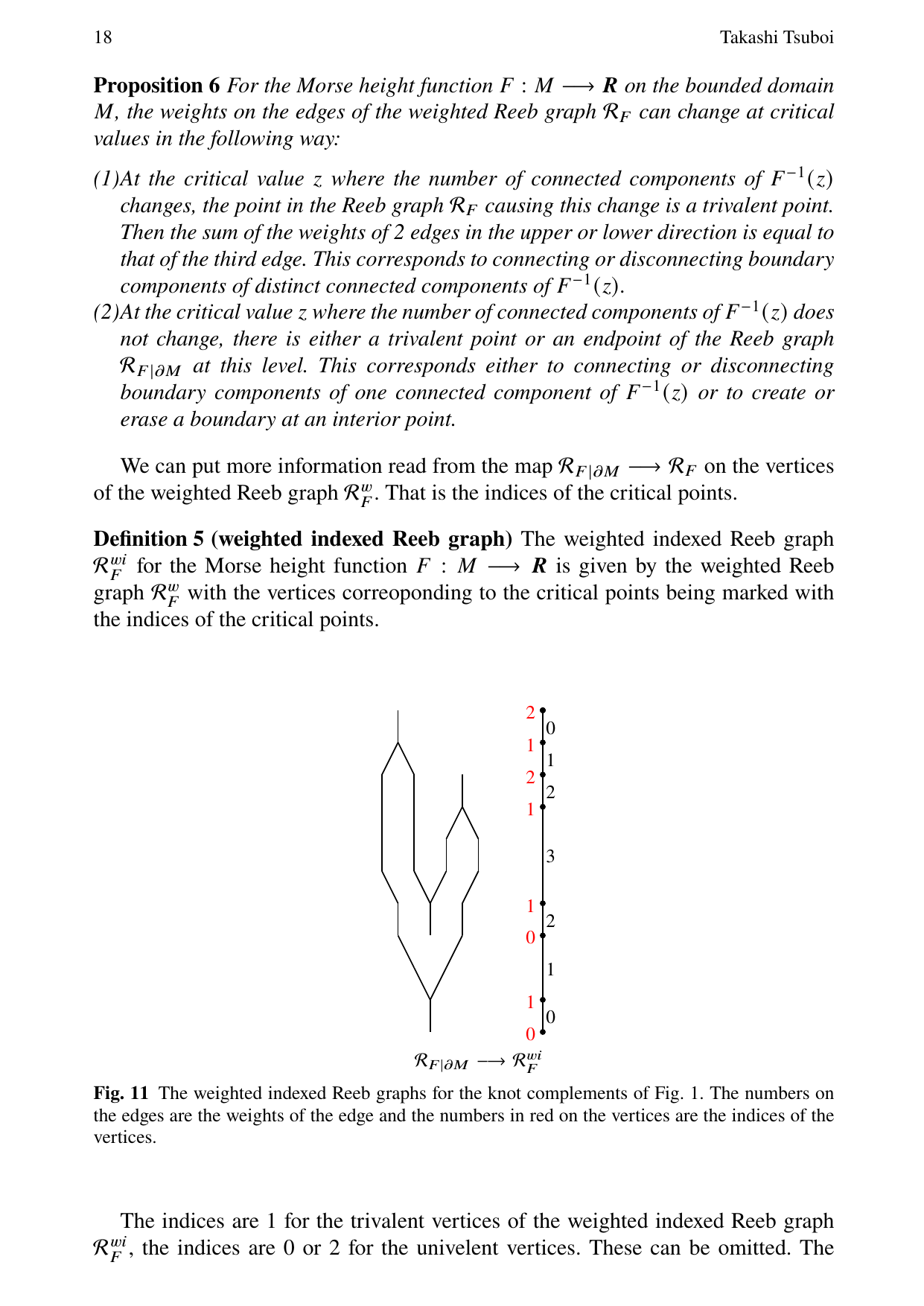}
\caption{The weighted indexed Reeb graphs for the knot complements of Figure \ref{fig:mathsoc_trefoil}.  The numbers on the edges are the weights of the edge and the numbers in red on the vertices are the indices of the vertices. }\label{fig:indexedReeb}
\end{center}
\end{figure}

\begin{figure}
\begin{center}
\includegraphics[height=13cm]{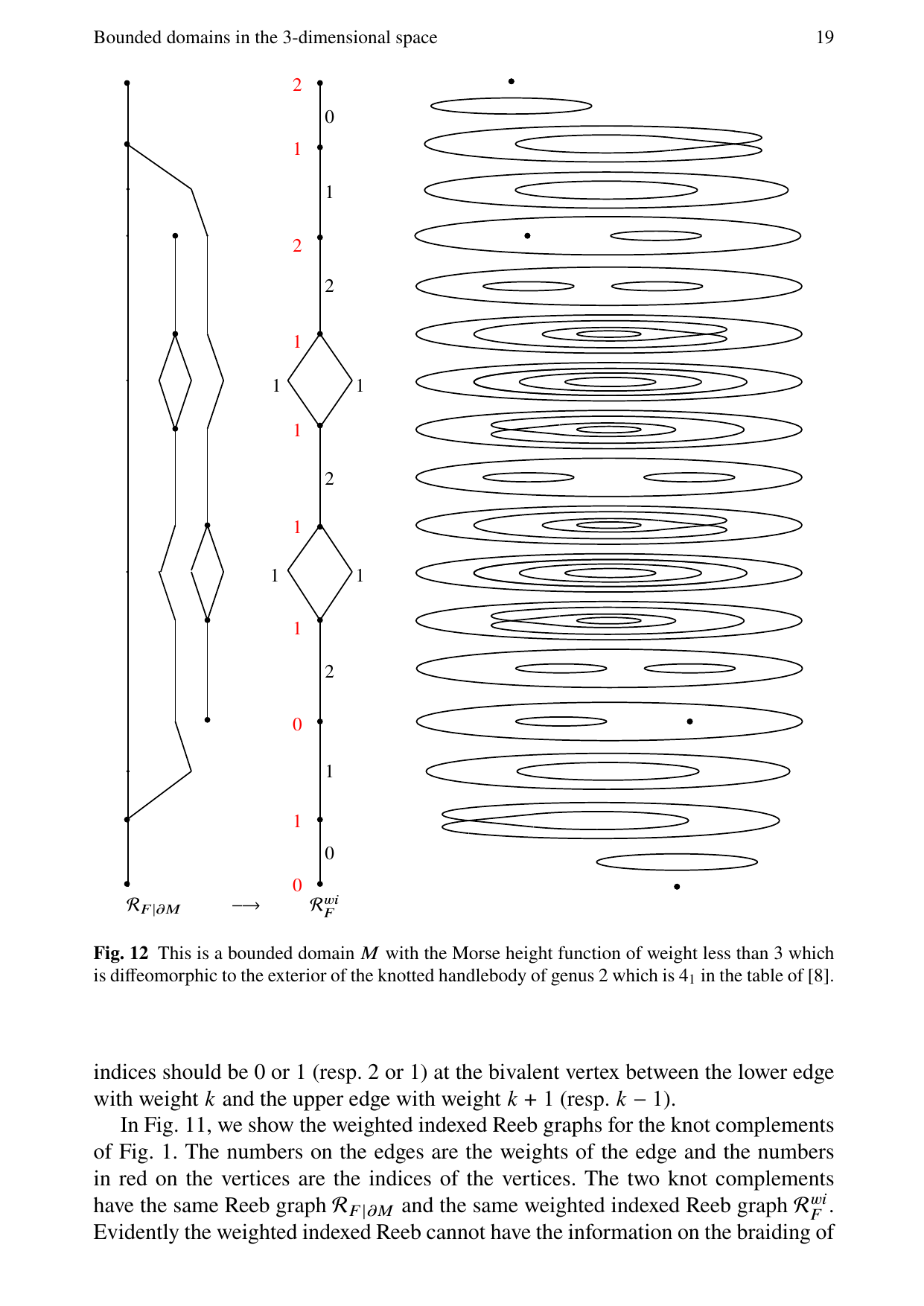}
\caption{This is a bounded domain  $M$ with the Morse height function of weight less than 3 which is diffeomorphic to the exterior of the knotted handlebody of genus 2 which is $4_1$ in the table of \cite{IKMS}.}\label{fig:counterExample}
\end{center}
\end{figure}

The indices are 1 for the trivalent vertices of the weighted indexed Reeb graph $\mathcal{R}_F^{wi}$,  the indices are 0 or 2 for the univelent vertices. These can be omitted. 
 The indices should be 0 or 1 (resp. 2 or 1) at the bivalent vertex between the lower edge with weight $k$ and the upper edge with weight $k+1$ (resp. $k-1$).
 
In Figure \ref{fig:indexedReeb}, we show the weighted indexed Reeb graphs for the knot complements of Figure \ref{fig:mathsoc_trefoil}.  The numbers on the edges are the weights of the edge and the numbers in red on the vertices are the indices of the vertices.
The two knot complements have the same Reeb graph $\mathcal{R}_{F|\bd M}$ and the same weighted indexed Reeb graph $\mathcal{R}^{wi}_F$.
Evidently the weighted indexed Reeb cannot have the information on the braiding of the cylinders in the inverse image $F^{-1}([z_0,z_1])$ of intervals $[z_0,z_1]\subset \RR$ formed by regular values.

\begin{remark}
The diffeomorphism type of the bounded domain is determined by the  
weighted indexed Reeb graph $\mathcal{R}_F^{wi}$ if the weight is less than 2.
If the weight is larger than 1, the ambiguity appears from the choice of the boundary component of the level set $F^{-1}(z)$ where the critical point of index 1 appears.
\end{remark}

We studied in the previous section \ref{sec:handlebodies}, the case where the weight of
$\mathcal{R}_F^w$ is always 0 and this implies that the bounded domain is a handlebody. 
If the weight is 0 or 1, that is, if the components of $F^{-1}(z)$ for a regular value $z$ are disks or anuli, then  we can show that $M$ is still a handlebody.

\begin{theorem}\label{th:w<2}
Let $M$ be a bounded domain with (one) boundary component $\bd M$. 
If the weights in the weighted Reeb graph
$\mathcal{R}_{F}^w$ of the Morse height function $F:M\lra \RR$ are less than 2, then $M$ is a handlebody.
\end{theorem}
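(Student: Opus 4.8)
The plan is to argue by induction on the height, tracking the level sets $F^{-1}(z)$ from $\min F(M)$ upward, and showing that the hypothesis that every component of a regular level set is a disk or an annulus forces $M$ to be built up, stage by stage, as an embedded handlebody. The core of the argument is that each annular component contributes, essentially, exactly one ``hole'' worth of genus, and that the global topology of the domain can be recovered as a regular neighborhood of a suitable spine. Concretely, I would refine the Reeb graph analysis of Proposition \ref{prop:weight}: since weights are less than $2$, the weight on every edge is $0$ (a disk component) or $1$ (an annular component), and at each critical value the weight change across a trivalent vertex obeys the additivity rule in case (1) of Proposition \ref{prop:weight}, which with weights in $\{0,1\}$ leaves only a short, explicitly enumerable list of local moves: a disk splits into two disks or is born/dies (weight-$0$ vertices), a disk becomes an annulus or vice versa (a bivalent vertex where an index-$1$ critical point of $F|\bd M$ lies on a single boundary circle, changing weight by $1$), two disks merge into one disk, a disk and an annulus merge into an annulus, or two annuli merge into one annulus (and the upside-down versions of all of these). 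The figures \ref{fig:D2-2D2} and \ref{fig:bifurcate} already display exactly these cases.

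The key step is then a local-to-global handle-decomposition argument. I would attach a $0$-handle at each local minimum (birth of a disk), a $1$-handle at each index-$1$ critical point of $F|\bd M$, and a $2$-/$3$-handle at each local maximum; the disk-or-annulus hypothesis guarantees that at no stage does $F^{-1}((-\infty,z])$ acquire a component that is anything more complicated than a handlebody with a collar of annuli on part of its boundary. More precisely, I would prove by induction on $z$ the statement: \emph{each connected component of $F^{-1}((-\infty,z])$ is an embedded handlebody, and its intersection with the level set $F^{-1}(z)$ is a disjoint union of disks (one for each time the handlebody's boundary meets that level) together with annuli each of which is a ``tube'' $S^1\times[0,\varepsilon]$ on $\bd$(that handlebody).} Each of the finitely many local moves listed above either leaves this invariant intact or performs a boundary-connected sum / $1$-handle addition compatible with it; crucially, an annulus in the level set never forces the attaching region of a $1$-handle onto two \emph{different} components in a way that would break the handlebody structure, because an annular level component has both its boundary circles on the boundary of a single component of $F^{-1}((-\infty,z])$. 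Passing the top critical value caps everything off and yields that $M$ itself is a handlebody.

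The main obstacle, and the step I would spend the most care on, is the case analysis around a critical value where an annular level component is involved — in particular the pair-of-pants transitions of Figure \ref{fig:D2-2D2}, where an index-$1$ critical point connects boundary circles \emph{within} one component. One must check that such a move, applied to a component that is inductively a handlebody $H$ with boundary tubes, again produces a handlebody: this amounts to verifying that attaching a $1$-handle to $\bd H$ along two disks (both lying on the boundary surface of the \emph{same} handlebody $H$) yields a handlebody of genus one higher, which is standard, but one must make sure the attaching arc and the framing are the ones actually dictated by the Morse function — here the possible ``braiding'' of the tubes noted after Figure \ref{fig:indexedReeb} is irrelevant to the diffeomorphism type though not to the isotopy type, and since the theorem only claims ``$M$ is a handlebody'' (not ``unknotted''), this braiding causes no trouble. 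A secondary subtlety is bookkeeping the hypothesis that $\bd M$ is connected: it rules out the level set ever being, say, a single annulus that closes up into a torus producing a closed boundary component below the top, so one should note that connectedness of $\bd M$ is automatically consistent with, and not needed beyond, the inductive setup. I would close by remarking that the same argument identifies the genus of the resulting handlebody with $1-\chi(\mathcal{R}_F)$ plus the total number of annulus-creating bivalent vertices, matching the weighted Reeb graph data.
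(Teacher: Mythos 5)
Your overall strategy (sweep upward and maintain the invariant that every component of $F^{-1}((-\infty,z])$ is a handlebody meeting the level in disks and annuli) is a legitimate alternative to the paper's argument, which instead works globally on the weighted Reeb graph: the weight-$1$ edges form monotone segments (thickened cylinders), and the proof classifies the two endpoints of each such segment and assembles $M$ from capped cylinders, solid-torus pieces and the fattened arcs of weight $0$. But as written your case analysis has a genuine gap: your inventory of local moves omits the transitions of type (2) in Proposition \ref{prop:weight} in which a boundary circle is \emph{created or erased at an interior point} of a level component, i.e.\ the concave critical points of index $0$ and index $2$ (the endpoints of $\mathcal{R}_{F|\bd M}$ lying over bivalent points of $\mathcal{R}_F$). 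You attribute every disk$\leftrightarrow$annulus transition to an index-$1$ critical point on a single boundary circle, and you flag the index-$1$/pair-of-pants moves of Figure \ref{fig:D2-2D2} as the delicate step — but weight-$2$ components cannot occur here at all, and the index-$1$ moves are exactly the harmless ones (boundary-connected sum or adding a handle to a single handlebody). The move that actually threatens your invariant is the concave index-$2$ point capping the inner hole of an annular level component: three-dimensionally this attaches a $2$-handle to the sublevel set, and $2$-handle attachment does not in general preserve being a handlebody.

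This is also precisely where connectedness of $\bd M$ must enter, contrary to your remark that it is ``not needed beyond the inductive setup.'' The region between two concentric spheres satisfies every other hypothesis (all regular level components are disks or annuli) and is not a handlebody; in your induction its sublevel set ceases to be a handlebody exactly when the top of the inner sphere (a concave index-$2$ point) is passed. The paper handles this by following each weight-$1$ segment: if both of its endpoints are endpoints of $\mathcal{R}_{F|\bd M}$ (hole born at a concave index-$0$ point, capped at a concave index-$2$ point), the swept inner boundary is a $2$-sphere component of $\bd M$, contradicting connectedness; otherwise at least one end of the tube opens into the weight-$0$ part, and the capped or uncapped cylinder contributes a ball or a solid-torus piece. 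To repair your proof you would need to add the hole-creation/hole-capping moves to the list, and at each concave index-$2$ point argue — using connectedness of $\bd M$ and the monotonicity of the weight-$1$ tube below it — that the attaching circle of the resulting $2$-handle is standard enough that the component remains a handlebody. Without that step the induction does not close.
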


\begin{proof} 
In the weighted Reeb graph $\mathcal{R}_{F}^w$, the edges of weight 1 form a disjoint union of segments where $F$ is monotone. These segments correspond to thickened cylinders each components of whose level sets are annuli. There are fattened arcs corresponding to edges of weight 0.  The endpoints of these segements of weight 1 should be the type (2) of Proposition \ref{prop:weight}. If both of endpoints of a segment correspond to endpoints of $\mathcal{R}_{F|\bd M}$, the family of the inner boundary of the annular component form a boundary component of $M$ and this contradicts that $\bd M$ is connected. Hence one of the endpoints of a segment corresponds to a trivalent point of $\mathcal{R}_{F|\bd M}$. If one endpoint of the segment corresponds to an endpoint of $\mathcal{R}_{F|\bd M}$ and the other corresponds to a trivalent point of $\mathcal{R}_{F|\bd M}$, then the part corresponding to the segment is 3-ball. For, the disk which appears corresponding to an endpoint of $\mathcal{R}_{F|\bd M}$ caps the thickened cylinder. If both of endpoints of a segment correspond to trivalent points of $\mathcal{R}_{F|\bd M}$, the thickened cylinder continues as fattened arcs both upward and downward. Then the thickened cylinder part is isotopic to a solid torus. Thus in total, $M$ is diffeomorphic to a handlebody. 
\end{proof}

When the weights of the weighted Reeb graph $\mathcal{R}^w_F$ are less than 3 but have value 2, that is, a connected component of a fiber is a disk with 2 holes, we have a bounded domain which is not a handlebody which is shown in Figure \ref{fig:counterExample}. The bounded domain  $M$ in Figure \ref{fig:counterExample} has a Morse height function of weight less than 3 which is diffeomorphic to the exterior of the knotted handlebody of genus 2 which is $4_1$ in the table of \cite{IKMS} If the complement of a handlebody knot is a handlebody, the handlebody knot is unknotted (\cite{Waldhausen}). Hence the bounded domain  $M$ is not a handlebody.

So we restrict our attentions to the case where $\bd M$ is a torus in the next section.

\section{Reeb graphs of knot exteriors}

We look at a bounded domain $M$ in the 3-dimensional Euclidian space with the boundary $\bd M$ being a torus.
Then by the Solid torus theorem \ref{th:solid_torus_theorem}, $M$ is isotopic either to a tubular neighborhood of a knot or to a knot exterior.
If $M$ is a small tubular neighborhood of a knot, there is a Morse height function $F:M\lra \RR$ such that the level sets of the regular values are disjoint union of 1-disks.

For the Reeb graphs $\mathcal{R}_{F|\bd M}$ and $\mathcal{R}_{F}$, we have the following proposition.

\begin{proposition}\label{prop:pi1_torus}
Let $M$ be a bounded domain $M$ in the 3-dimensional Euclidian space with the boundary $\bd M$ being a torus. For a Morse height function $F:M\lra \RR$, the following holds.
\begin{itemize}
\item[\rm(1)]\ 
$\pi_1( \mathcal{R}_{F|\bd M})\cong \ZZ$.
\item[\rm(2)]\ 
$\pi_1( \mathcal{R}_{F})$ is either trivial or isomorphic to $\ZZ$.
\end{itemize} 
\end{proposition}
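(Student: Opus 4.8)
The plan is to reduce both assertions to three facts that are already available: the Reeb graphs $\mathcal{R}_{F|\bd M}$ and $\mathcal{R}_F$ are finite connected graphs, so each has free fundamental group of rank equal to its first Betti number $b_1$; the vertical maps $\bd M\lra \mathcal{R}_{F|\bd M}$ and $M\lra \mathcal{R}_F$ of Proposition \ref{prop:pi1} are surjective on $\pi_1$; and the Solid torus theorem \ref{th:solid_torus_theorem}. We may assume that the critical values of $F|\bd M$ are pairwise distinct (otherwise rotate the projection line slightly).

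For (1), I would compute $\chi(\mathcal{R}_{F|\bd M})$ directly. Its vertices are the images of the critical points of $F|\bd M$: a critical point of index $0$ or $2$ gives a univalent vertex, since a circle of the level set $(F|\bd M)^{-1}(z)$ is born or dies there, whereas a critical point of index $1$ gives a \emph{trivalent} vertex, because on the \emph{oriented} surface $\bd M$ crossing a saddle performs an orientation preserving band move on the one-dimensional level set and hence either splits one circle into two or merges two circles into one. Letting $c_i$ be the number of critical points of index $i$ and counting vertices and half-edges, one gets
\[
\chi(\mathcal{R}_{F|\bd M})=(c_0+c_1+c_2)-\tfrac12(c_0+c_2+3c_1)=\tfrac12(c_0-c_1+c_2)=\tfrac12\chi(\bd M)=0,
\]
since $\bd M$ is a torus; hence $b_1(\mathcal{R}_{F|\bd M})=1$ and $\pi_1(\mathcal{R}_{F|\bd M})\cong\ZZ$. (By Proposition \ref{prop:pi1}, $\pi_1(\mathcal{R}_{F|\bd M})$ is also a quotient of $\pi_1(\bd M)\cong\ZZ^2$, hence a free abelian group, so a priori it is $\mathbf{1}$ or $\ZZ$; the displayed computation is what rules out $\mathbf{1}$, equivalently rules out $\mathcal{R}_{F|\bd M}$ being a tree — a tree would force $\bd M\cong S^2$.)

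For (2), it is enough to prove $b_1(\mathcal{R}_F)\leqq 1$. By Proposition \ref{prop:pi1} the map $\pi_1(M)\lra\pi_1(\mathcal{R}_F)$ is surjective, so after abelianizing $H_1(M)$ surjects onto $H_1(\mathcal{R}_F)\cong\ZZ^{\,b_1(\mathcal{R}_F)}$, whence $b_1(\mathcal{R}_F)\leqq b_1(M)$. Because $\bd M$ is a torus, the Solid torus theorem \ref{th:solid_torus_theorem} shows that $M$ is a solid torus or a knot exterior, and in either case $H_1(M)\cong\ZZ$, so $b_1(M)=1$. Therefore $\pi_1(\mathcal{R}_F)$ is free of rank at most $1$, that is, trivial or $\ZZ$. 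Both cases genuinely occur: the two Morse height functions on a solid torus in Figure \ref{fig:torus} give $\pi_1(\mathcal{R}_F)\cong\ZZ$ and $\pi_1(\mathcal{R}_F)=\mathbf{1}$ respectively, so the statement is sharp. The only point requiring real care is the valence bookkeeping in (1) — in particular using the orientability of $\bd M$ to exclude four-valent vertices at images of index $1$ critical points — but the orientability of $\bd M$ and the local form of the level sets at critical points have already been established in Section \ref{sec:Morse_height_function} and Proposition \ref{prop:weight}, so I do not expect a genuine obstacle.
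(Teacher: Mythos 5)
Your proof is correct. Part (2) is essentially identical to the paper's argument: surjectivity of $\pi_1(M)\lra\pi_1(\mathcal{R}_F)$ from Proposition \ref{prop:pi1}, passing to $H_1$, the Solid torus theorem giving $H_1(M;\ZZ)\cong\ZZ$, and the fact that a graph has free fundamental group. In part (1) you take a genuinely more computational route: the paper observes that $\pi_1(\mathcal{R}_{F|\bd M})$ is a free group which is a surjective image of $\pi_1(T^2)\cong\ZZ^2$, hence trivial or $\ZZ$, and then excludes the trivial case by asserting (without further argument) that a tree Reeb graph would force $\bd M\cong S^2$; you instead establish exactly that point by the valence count $\chi(\mathcal{R}_{F|\bd M})=(c_0+c_1+c_2)-\frac12(c_0+3c_1+c_2)=\frac12\chi(\bd M)=0$, using univalence at extrema and trivalence at saddles on the oriented surface $\bd M$, so that $b_1=1$ directly. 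What your version buys is a self-contained justification of the step the paper only asserts; what it costs is the genericity you yourself flag: trivalence of saddle vertices (and likewise the paper's tree-implies-sphere claim) can fail when two index-1 critical points lie on the same connected component of a critical level, and note that ``rotating the projection line'' replaces $F$ by a nearby Morse height function, so strictly speaking it does not prove the statement for the original $F$ in that degenerate situation. Since the paper works throughout under the same implicit convention that each critical level component carries a single critical point (its trivalent-vertex description in Proposition \ref{prop:weight} presupposes this), this is a shared convention rather than a gap peculiar to your argument, but it would be worth stating the hypothesis explicitly rather than hiding it in a perturbation remark. Your closing observation that both possibilities in (2) occur, via the two height functions of Figure \ref{fig:torus}, is accurate and a useful sanity check.
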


\begin{proof}
(1) By Remark \ref{prop:pi1},  $\pi_1(\bd M)\lra \pi_1(\mathcal{R}_{F|\bd M})$ is surjective. Since $\bd M$ is a torus, $\pi_1(\bd M)\cong \ZZ^2$. Since $\mathcal{R}_{F|\bd M}$ is 1-dimensional, $\pi_1( \mathcal{R}_{F|\bd M})$ is either $0$ or $\ZZ$. 
If $\pi_1( \mathcal{R}_{F|\bd M})=0$, then $\mathcal{R}_{F|\bd M}$ is a tree and we see that $\bd M$ should be diffeomorphic to a 2-sphere. Thus $\pi_1( \mathcal{R}_{F|\bd M})\cong \ZZ$.

(2) Again, by Proposition \ref{prop:pi1},  $\pi_1(M)\lra \pi_1(\mathcal{R}_F)$ is surjective. Hence 
 $H_1(M;\ZZ)\lra H_1(\mathcal{R}_F;\ZZ)$ is also surjective. Since $\bd M\cong T^2$, by the Solid torus theorem, $M$ is either a tubular neighborhood of a knot or a knot exterior. Hence $H_1(M;\ZZ)\cong \ZZ$. Since $\mathcal{R}_F$ is 1-dimensional, $H_1(\mathcal{R}_F;\ZZ)$ is either $0$ or $\ZZ$.
Thus $\mathcal{R}_F$ is either a tree or homotopy equivalent to a circle. In the latter case it is the union of the circle and edges attached to points of the circle.
\end{proof}

\begin{proposition}\label{prop:RF_circle}
Let $M$ be a bounded domain $M$ in the 3-dimensional Euclidian space with the boundary $\bd M$ being a torus. For a Morse height function $F:M\lra \RR$, if $\pi_1( \mathcal{R}_{F})\cong \ZZ$ and the weight of the weighted Reeb graph $\mathcal{R}_F^w$ is less than 3, then $M$ is isotopic to a tubular neighborhood of a knot. 
\end{proposition}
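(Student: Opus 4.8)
The plan is to show that the torus $\bd M$ is compressible in $M$. Granting this, Dehn's lemma (Theorem \ref{th:Dehn}) produces an embedded disk $D\subset M$ with $\bd D$ an essential simple closed curve on $\bd M$; since every essential simple closed curve on a torus is non-separating, $\bd D$ does not separate $\bd M$, so $M$ cut along $D$ is connected, and surgering $\bd M$ along $D$ yields a $2$-sphere which bounds a ball in $S^3$ by the Alexander theorem (Theorem \ref{th:Alexander}). Hence $M$ is obtained from a ball by attaching a single $1$-handle, so $M$ is a solid torus, i.e.\ a regular neighborhood of its core knot, which is the assertion. (Conversely a non-trivial knot exterior has incompressible boundary, so compressibility of $\bd M$ is exactly what must be established.)

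To locate the compression I would first read off the combinatorics of the cycle. Since $\pi_1(\mathcal{R}_F)\cong\ZZ$, the graph $\mathcal{R}_F$ is a circle $C$ with trees attached; by Propositions \ref{prop:pi1} and \ref{prop:pi1_torus} the composite $\pi_1(\bd M)\to\pi_1(\mathcal{R}_{F|\bd M})\to\pi_1(\mathcal{R}_F)$ is onto $\ZZ$, so $\pi_1(\mathcal{R}_{F|\bd M})\cong\ZZ\to\pi_1(\mathcal{R}_F)\cong\ZZ$ is an isomorphism and the unique cycle $c$ of $\mathcal{R}_{F|\bd M}$ is mapped homeomorphically, up to orientation, onto $C$. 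A circle of a level set $(F|\bd M)^{-1}(z)$ is inessential on $\bd M$ exactly when cutting $\bd M$ along it disconnects $\bd M$, i.e.\ exactly when the corresponding edge of $\mathcal{R}_{F|\bd M}$ is a bridge; hence the essential circles of $(F|\bd M)^{-1}(z)$ are precisely those carried by the edges of $c$, and at each regular level their number is even (a topological circle meets a regular level of a function in an even set) and is positive only over the bounded height interval of $c$. Next, using Proposition \ref{prop:weight} together with the weight bound, I would pin down the lowest vertex $v_-$ of $C$: it is not univalent (it lies on a cycle), not bivalent (a bivalent vertex has one edge up and one down, so it could not be a local minimum of $C$), and not a ``merging'' trivalent vertex, since a trivalent vertex cannot have all three incident edges directed upward (that would be a local minimum of $F$, hence univalent). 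So $v_-$ is a ``splitting'' vertex where a level-set component $Q$ splits, going upward, into the two components $Q_1,Q_2$ carried by the two edges of $C$ leaving $v_-$, with $w(Q)=w(Q_1)+w(Q_2)\le 2$ and each $w(Q_i)\le 2$; symmetrically the highest vertex $v_+$ of $C$ is a ``merging'' vertex. In particular the two essential circles born at $v_-$ form a meridional pair on $\bd M$.

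The heart of the argument is then to travel along the two arcs of $C$ from $v_-$ to $v_+$ and to recognise the part of $M$ lying over a neighbourhood of $C$ as an embedded solid torus, whose caps at $v_-$ and $v_+$ exhibit the compressing disk. Along such an arc an on-$C$ component only changes by splitting off or absorbing a boundary circle, by creating or killing a small disk at an interior extremum of $F|\bd M$, or, at a trivalent vertex, by attaching or detaching a complementary subtree of $\mathcal{R}_F$; the bound $w\le 2$ forces every on-$C$ component to be a disk, an annulus or a pair of pants, and should force the complementary subtrees to contribute only ball- or solid-torus-like pieces that can be absorbed by isotopy using the surgery and handle moves of Sections \ref{sec:3} and \ref{sec:handlebodies}. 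The main obstacle is precisely this absorption step: one must rule out that the weight-$2$ (pair-of-pants) level sets along or near the cycle assemble into a knotted handlebody piece or an essential surface obstructing compression --- exactly the phenomenon occurring for the bounded domain of Figure \ref{fig:counterExample}. The feature that makes the present case work, and distinguishes it from that counterexample, is that $\bd M$ is a \emph{single} torus: the evenness of the essential-circle count together with Euler-characteristic bookkeeping on the planar level sets leaves no room for a knotted complementary piece, so after the isotopies the cyclic part is a standard solid torus, $\bd M$ compresses, and the first paragraph finishes the proof.
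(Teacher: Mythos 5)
Your opening reduction is fine: by the solid torus theorem (Theorem \ref{th:solid_torus_theorem}) $M$ is a solid torus or a knot exterior, and a knot exterior with compressible boundary is the trivial one, so everything hinges on actually producing a compressing disk. The proposal never does this. The entire burden is shifted to the ``absorption step'' of your last paragraph, and there you only assert that ``evenness of the essential-circle count together with Euler-characteristic bookkeeping on the planar level sets leaves no room for a knotted complementary piece.'' No counting argument of this kind can work: the bounded domain of Figure \ref{fig:counterExample} also has a Morse height function all of whose regular level components are disks, annuli and pairs of pants, so it satisfies every parity and Euler-characteristic constraint you invoke, and yet it is not a handlebody. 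The only hypothesis separating your situation from that example is that $\bd M$ is a single torus, and converting that hypothesis into compressibility is precisely the 3-manifold-topology content that is missing; it cannot be extracted from the combinatorics of $\mathcal{R}_F^w$ alone, since the weighted (even the weighted indexed) Reeb graph does not see the braiding/knotting of the level cylinders. Some intermediate claims are also unjustified (e.g.\ that the cycle of $\mathcal{R}_{F|\bd M}$ maps homeomorphically onto $C$, or that the two circles born at $v_-$ are ``meridional''), but these are minor next to the missing compression.

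For comparison, the paper closes exactly this gap by a short concrete argument that you could graft onto your set-up. Choose a closed curve $c\subset M$ transverse to the level sets which projects onto the cycle of $\mathcal{R}_F$, so $c$ has nonzero algebraic intersection with each level component it crosses. At the lowest (splitting) vertex of the cycle, Proposition \ref{prop:weight} and the bound $w<3$ force the two upward cycle edges to have weights $w_1,w_2$ with $w_1+w_2\leq 2$; hence some cycle edge met by $c$ has weight $0$ or $1$. Weight $0$ gives a level disk which is a compressing disk outright. Weight $1$ gives a properly embedded level annulus $A$ met algebraically nontrivially by $c$; if $A$ were essential, $M$ would be a composite knot exterior and $A$ a separating meridional annulus, which has zero algebraic intersection with every closed curve in $M$ --- a contradiction --- so $A$ is boundary-compressible, and the boundary compression produces an embedded disk meeting $c$ once, whose boundary is therefore essential on $\bd M$: a compressing disk. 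This (or something equally concrete, such as the case analysis used for the tree case in Theorem \ref{th:w<3knotExt}) is what your proposal needs but does not supply.
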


\begin{proof}
By the Solid torus theorem, $M$ is either a solid torus which is a tubular neighborhood of a knot or a knot complement.

If $\pi_1( \mathcal{R}_{F})\cong \ZZ$, $\mathcal{R}_F$ is the union of the circle and edges attached to points of the circle. For this circle on $\mathcal{R}_F$, we can find a closed curve $c$ in $M$ which is transverse to level sets over the regular value levels and which maps to the circle. 
We look at the level set $F^{-1}(z)$ of a regular value $z$, such that one of the connected components of $F^{-1}(z)$ intersects $c$ in $M$.

If there is an edge of weight 0 in the circle in the Reeb graph $\mathcal{R}_F$, the inverse image of a point of the edge or weight 0 is a compressing disk of $M$ and $M$ is a solid torus.

Assume that the circle in the Reeb graph $\mathcal{R}_F$
 consists of the edges of weight 2 and weight 1. Then 
there is a connected component of a level set which is an annulus which intersects $c$.

If this annulus is essential, then $M$ is the complement of a composite knot. Moreover the boundary of the annulus is the boundary of two meridian disks of the tubular neighborhood of the knot and the annulus and the two disks form a sphere which divides the 3-sphere to two 3-disks each of which  contains a non trivial knot.
Since this annulus divides the knot complement $M$ into two parts, the annulus is the boundary of one part and the algebraic intersection number of any circle in $M$ and the annulus is zero. This contradicts the choice of the level surface and the annulus should be boundary compressible. 
That is, there is a compressing disk whose bundary is the union of a boundary arc and an arc on the annulus joining two boundary circles.

If there is a boundary compressing disk to the annulus. we have an isotopy which pushes the arc on annulus to the boundary along the disk. In this way, we obtain an embedded disk which is the union of two (positive and negative) parallel disks of the boundary compressing disk and the annulus with a tubular neighborhood of the arc deleted. Since the intersection number of this embedded disk and circle is 1, the boundary of the disk is essential on the boundary. Thus this disk is compressing disk and $M$ is a solid torus.
\end{proof}

\begin{theorem}\label{th:w<3knotExt}
Let $M$ be a knot exterior. If the weights in the weighted Reeb graph
$\mathcal{R}_{F}^w$ of the Morse height function $F:M\lra \RR$ are less than 3, then $M$ is a solid torus.
\end{theorem}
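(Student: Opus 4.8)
First I would record a reduction. Since $\bd M$ is a torus, the Solid torus theorem (Theorem \ref{th:solid_torus_theorem}) says $M$ is isotopic either to a tubular neighborhood of a knot (a solid torus) or to a knot exterior, so it suffices to produce a compressing disk for $\bd M$ in $M$: once $\op{ker}(\pi_1(\bd M)\lra\pi_1(M))$ is nontrivial, Dehn's lemma (Theorem \ref{th:Dehn}) gives an embedded compressing disk (no loop theorem is needed since $\bd M\cong T^2$), and then, exactly as in the proof of Theorem \ref{th:solid_torus_theorem} — surger $\bd M\cong T^2$ along it to a $2$-sphere, apply Theorem \ref{th:Alexander}, and reassemble — one gets $M\cong D^2\times S^1$. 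Next I would invoke Proposition \ref{prop:pi1_torus}(2): $\pi_1(\mathcal{R}_F)$ is trivial or $\cong\ZZ$. If $\pi_1(\mathcal{R}_F)\cong\ZZ$, then since the weights are $<3$, Proposition \ref{prop:RF_circle} says outright that $M$ is a solid torus, and there is nothing more to do.

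The remaining case is $\pi_1(\mathcal{R}_F)=1$, i.e. $\mathcal{R}_F$ a tree. Here I would first dispose of the subcase that all weights are $<2$: Theorem \ref{th:w<2} applies (its hypothesis that $\bd M$ be connected holds), so $M$ is a handlebody, and a handlebody with torus boundary is a solid torus. So I may assume some edge of $\mathcal{R}_F$ has weight $2$, i.e. some regular level set has a pair-of-pants component. The plan is then to manufacture a compressing disk from the clash between $\mathcal{R}_F$ being a tree and $\mathcal{R}_{F|\bd M}$ having a cycle (Proposition \ref{prop:pi1_torus}(1)). Fix a generating cycle of $\pi_1(\mathcal{R}_{F|\bd M})\cong\ZZ$ and, via the surjection $\pi_1(\bd M)\lra\pi_1(\mathcal{R}_{F|\bd M})$ of Proposition \ref{prop:pi1}, a simple closed curve $c$ on the torus $\bd M$ mapping to a generator; $c$ is essential on $\bd M$, since a null-homotopic curve maps to a null-homotopic loop. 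Because $\mathcal{R}_F$ is a tree, the image of the generating cycle under $\mathcal{R}_{F|\bd M}\lra\mathcal{R}_F$ is null-homotopic and hence backtracks, and by following the backtracking I would locate a regular value $z$ and a connected component $C$ of $F^{-1}(z)$ — an annulus or a pair of pants — two of whose boundary circles $\kappa_1,\kappa_2$ lie on the cyclic part of $\mathcal{R}_{F|\bd M}$. These $\kappa_1,\kappa_2$ are essential on $\bd M$ and hence parallel, since any two disjoint essential simple closed curves on a torus are parallel. Then an arc of $c$ on $\bd M$ from $\kappa_1$ to $\kappa_2$, together with an essential arc of $C$ joining $\kappa_1$ to $\kappa_2$, bounds an embedded boundary-compressing disk $E$ for $C$; pushing the arc of $C$ across $E$ to $\bd M$ and doubling $E$ — just as in the proof of Proposition \ref{prop:RF_circle} — yields an embedded disk $D\subset M$ whose boundary meets $c$ algebraically once. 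Hence $\bd D$ is essential on $\bd M$, $D$ is a compressing disk, and $M$ is a solid torus.

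The hard part will be this last paragraph: reading off, from the purely combinatorial fact that $\mathcal{R}_F$ is a tree while $\mathcal{R}_{F|\bd M}$ has a cycle, a genuinely boundary-compressible annular or pair-of-pants level set whose boundary-compression can be \emph{doubled} into an honest compressing disk, rather than merely into a level set of smaller complexity. Everything else is bookkeeping that funnels into Propositions \ref{prop:pi1_torus}, \ref{prop:RF_circle} and Theorem \ref{th:w<2}. One small point I would isolate along the way is that, $\bd M$ being a torus, the at most three boundary circles of a level-set component of weight $\le 2$ carry at most two distinct slopes — which is exactly what forces $\kappa_1,\kappa_2$ to be parallel and makes the boundary-compression available — and I would also double-check the first-paragraph reduction, namely that a knot exterior possessing a compressing disk is a solid torus, via the $S^3=A\cup B$ argument of Theorem \ref{th:solid_torus_theorem} applied with $A=M$.
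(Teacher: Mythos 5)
Your reduction and the two easy cases are fine and agree with the paper: Proposition \ref{prop:pi1_torus} splits the proof into $\pi_1(\mathcal{R}_F)\cong\ZZ$ (settled by Proposition \ref{prop:RF_circle}) and $\mathcal{R}_F$ a tree, and the subcase of weights $<2$ follows from Theorem \ref{th:w<2}. The genuine gap is exactly the step you yourself flag as ``the hard part'': in the tree case you assert that an arc of $c\subset\bd M$ together with an essential arc of the annular or pair-of-pants level component $C$ bounds an embedded boundary-compressing disk $E$, because the two boundary circles $\kappa_1,\kappa_2$ are disjoint essential curves on the torus and hence parallel. Parallelism of $\bd C$ on $\bd M$ gives no such disk: an essential annulus in a composite (or cable, or torus) knot exterior has precisely this form --- two disjoint, parallel, essential boundary circles on the boundary torus --- and is boundary-incompressible, so the claimed $E$ need not exist. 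In Proposition \ref{prop:RF_circle} the paper rules out the essential-annulus possibility only because $\pi_1(\mathcal{R}_F)\cong\ZZ$ supplies a closed curve $c$ in $M$, transverse to the levels, meeting the annular component algebraically once, while an essential separating annulus meets every closed curve algebraically zero times. That mechanism is unavailable in your tree case: every regular level component lies over a point whose removal disconnects the tree $\mathcal{R}_F$, hence separates $M$ and has zero algebraic intersection with every closed curve, and your $c$ lies on $\bd M$, where it meets only $\bd C$, not $C$. So the construction of the compressing disk, and with it the heart of the theorem, is unproved; the intermediate step of locating $\kappa_1,\kappa_2$ on the cycle by ``following the backtracking'' is also only sketched, but it is the boundary-compression claim that actually fails.

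For comparison, the paper's treatment of the tree case is not a one-line compression argument but a lengthy case analysis: it chooses $F$ minimizing the number of weight-2 edges, enumerates the possible pairs of singularities at the two ends of a weight-2 segment (the configurations (1)--(5) and their mirror images of Figure \ref{fig:D2-2D2}), eliminates or simplifies each via Figures \ref{fig:upperlower11}--\ref{fig:upperlower55} (finding a compressing disk, a spherical boundary component, or an isotopy/birth--death cancellation reducing the number of weight-2 edges), and then checks one by one which pieces can occur as end parts of $\mathcal{R}_F^w$. Repairing your argument would require proving directly that no level component of a weight-$<3$ Morse height function with tree Reeb graph on a nontrivial knot exterior can be an essential annulus or essential pair of pants --- which is essentially the content of the theorem, so the proposal as it stands does not bypass the paper's analysis.
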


\begin{proof} 
By Proposition \ref{prop:pi1_torus}, the Reeb graph $\mathcal{R}_F$ has the homotopy type of one point or a circle. By Proposition \ref{prop:RF_circle}, the theorem is true if $\mathcal{R}_F$ has the homotopy type of a circle.

We assume that the Reeb graph $\mathcal{R}_F$ has the homotopy type of one point, that is,  $\mathcal{R}_F$ is a tree.
Then the union of edges of weight 1 and 2 of $\mathcal{R}^w_F$ form a tree as well and $\mathcal{R}^w_F$ can be simplified so that edges of weight 0
are added to the free edges of weight 1.

We look at the weighted Reeb graph $\mathcal{R}^w_F$ of the Morse height function $F:M\lra \RR$. We are looking at the case where the weights are less than 3.
Then the edges of weight 2  form a disjoint union of segments where $F$ is monotone as in the previous case, and it is always the case for the union of edges of highest weight.

If the weighted Reeb graph $\mathcal{R}_{F}^w$ has a segment consisting of edges of weights 1-2-1, then the combination of the upper and the lower singularitis are listed as follows using the numbering in Figure \ref{fig:D2-2D2}. Here {}' indicates the left-right mirror image.
\begin{center}
(1)-(1), (1)-(1)', (1)-(2), (1)-(3), (1)-(3)', \\(2)-(2), (2)-(3), (3)-(3), (3)-(3)'. 
\end{center}
We show that these cannot appear for the Morse height functions of non-trivial knot complement. 
Here trivial knot complement is the standard embedding of solid torus, and if we find a compressing disk in a knot complement, it is trivial knot complement. 
We assume that the Morse height function is chosen so that 
the number of the edges of weight 2 is minimum among the embeddings of the non-trivial knot complement. This forces that there are no union edges of weight 0 which branch out.

By Figures \ref{fig:upperlower11}, \ref{fig:upperlower12} and \ref{fig:upperlower23} with the captions there, we see that the vertical series of segments of weights 1-2-1 does not appear in the weighted Reeb graphs $\mathcal{R}_{F}^w$ of non-trivial knot complements.

\begin{figure}
\begin{center}
\includegraphics[height=3cm]{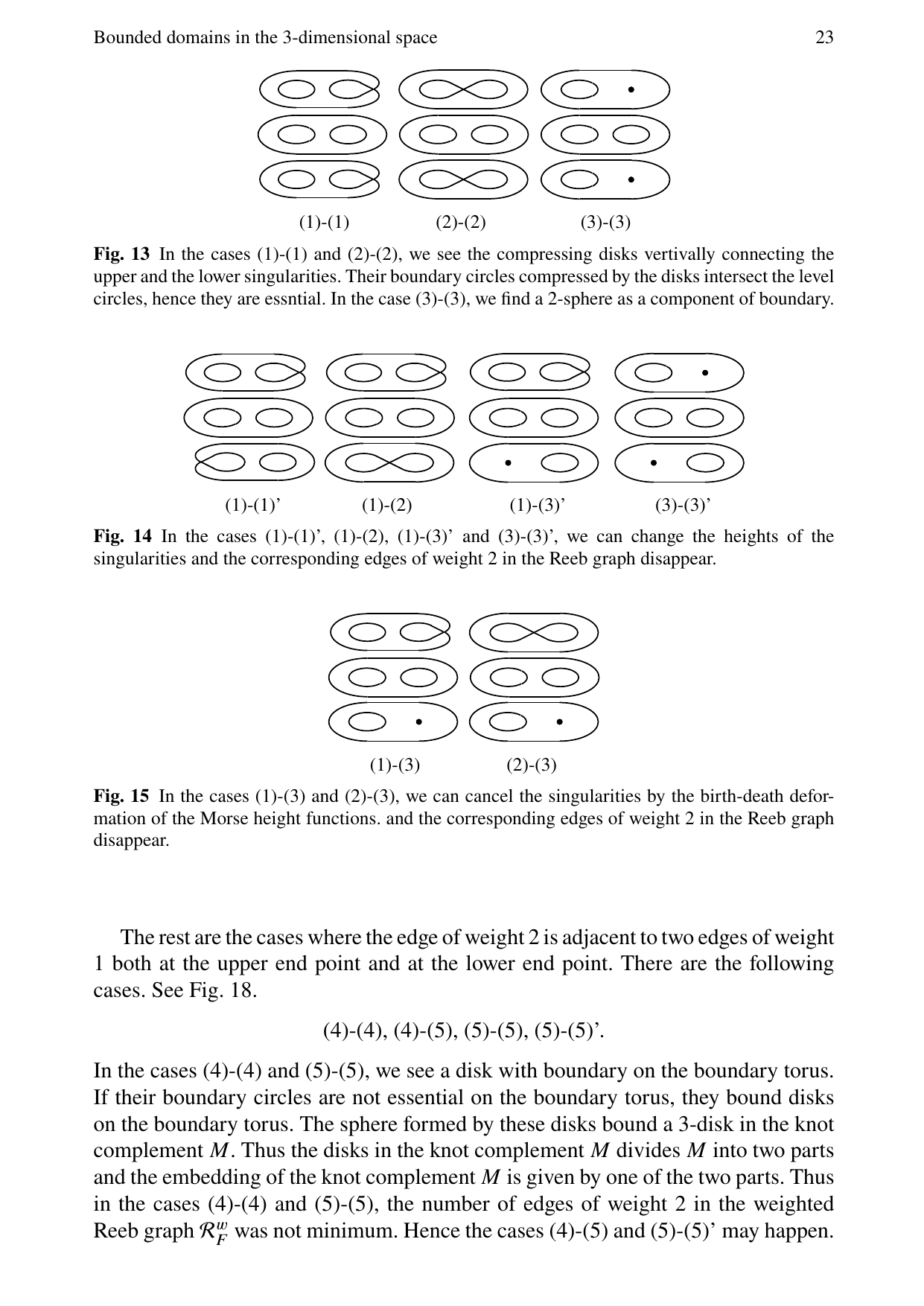}
\caption{In the cases (1)-(1) and (2)-(2), we see the compressing disks vertivally connecting the upper and the lower singularities. Their boundary circles compressed by the disks intersect the level circles, hence they are essntial. In the case (3)-(3), we find a 2-sphere as a component of boundary. 
}\label{fig:upperlower11}
\end{center}
\end{figure}
 
\begin{figure}
\begin{center}
\includegraphics[height=3cm]{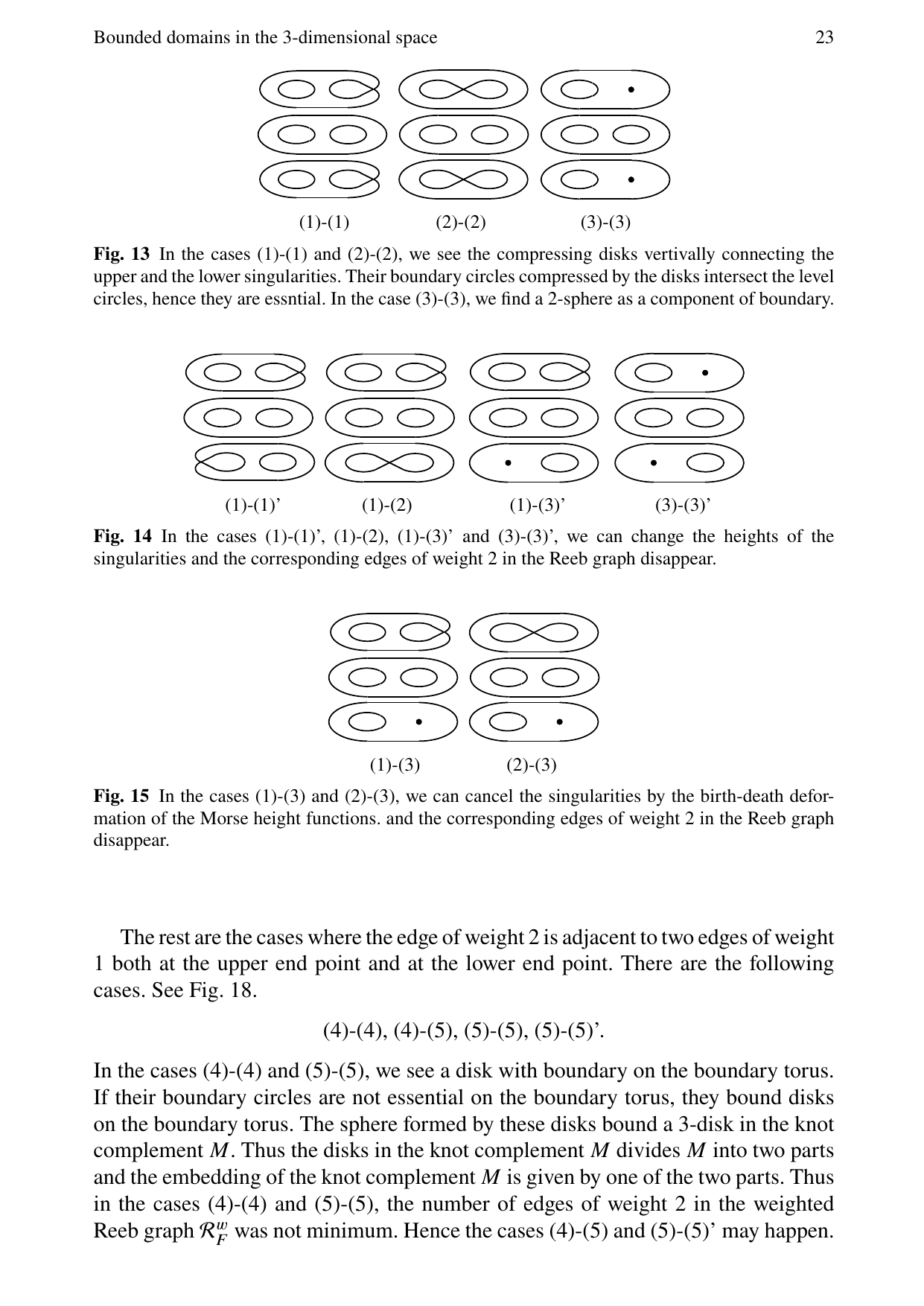}
\caption{In the cases (1)-(1)', (1)-(2), (1)-(3)' and (3)-(3)', we can change the heights of the singularities and the corresponding edges of weight 2 in the Reeb graph disappear.
}\label{fig:upperlower12}
\end{center}
\end{figure}

\begin{figure}
\begin{center}
\includegraphics[height=3cm]{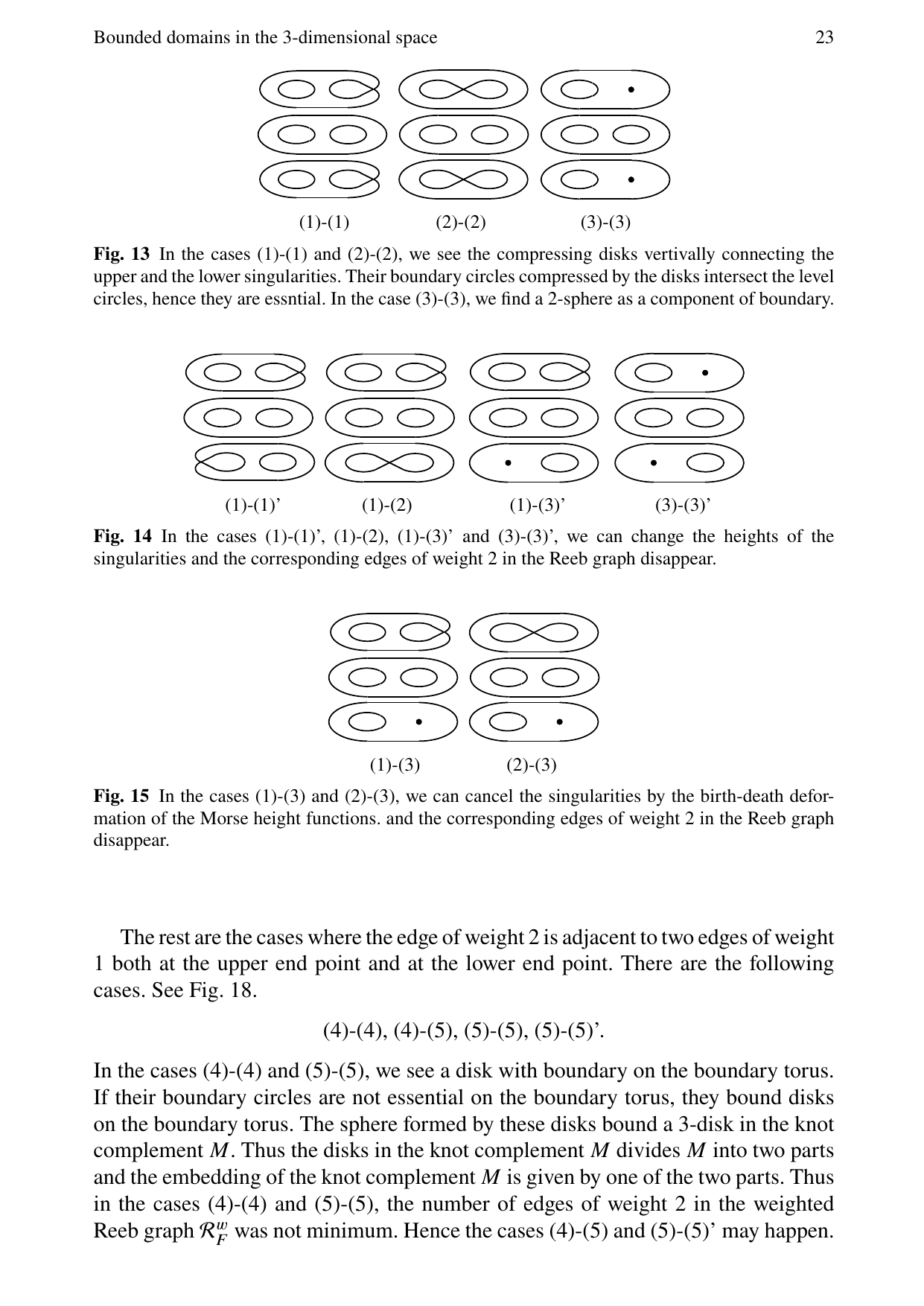}
\caption{In the cases (1)-(3) and (2)-(3), we can cancel the singularities 
by the birth-death deformation of the Morse height functions. and the corresponding edges of weight 2 in the Reeb graph disappear.
}\label{fig:upperlower23}
\end{center}
\end{figure}

\begin{figure}
\begin{center}
\includegraphics[height=6cm]{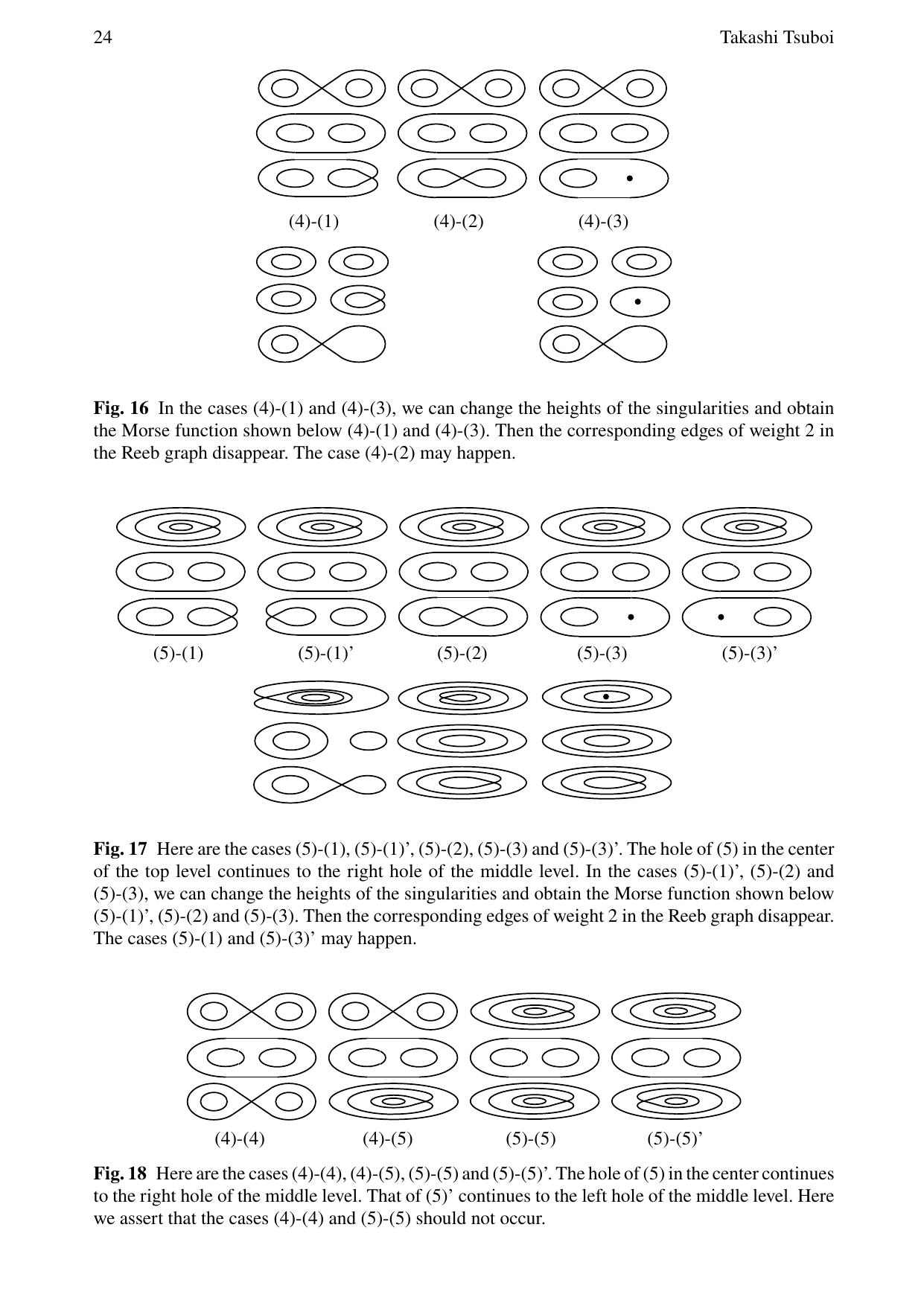}
\caption{In the cases (4)-(1) and (4)-(3), we can change the heights of the singularities and obtain the Morse function shown below (4)-(1) and (4)-(3). Then the corresponding edges of weight 2 in the Reeb graph disappear. The case (4)-(2) may happen.
}\label{fig:upperlower42}
\end{center}
\end{figure}

\begin{figure}
\begin{center}
\includegraphics[height=6cm]{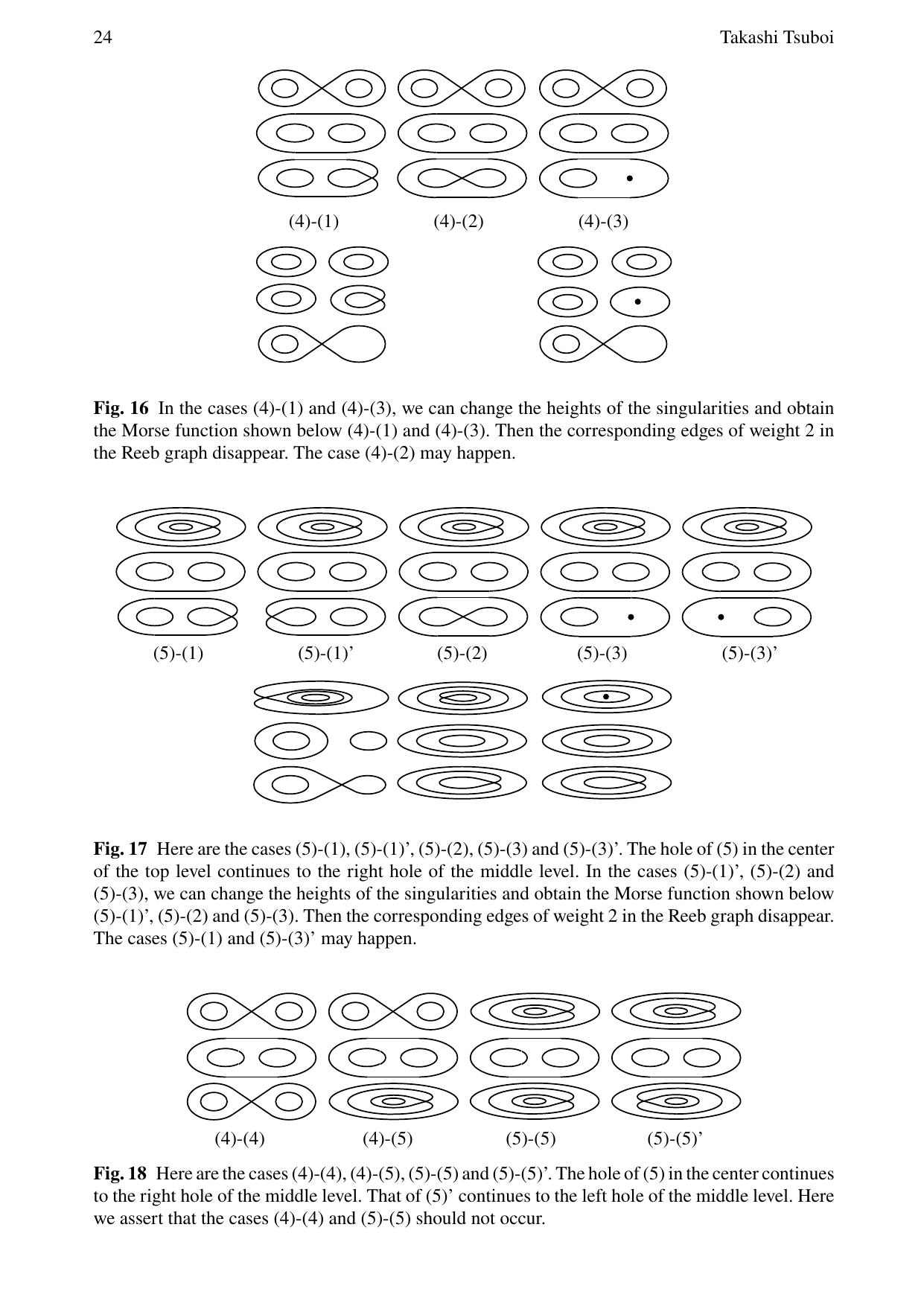}
\caption{Here are the cases (5)-(1), (5)-(1)', (5)-(2), (5)-(3) and (5)-(3)'. The hole of (5) in the center of the top level continues to the right hole of the middle level. In the cases (5)-(1)', (5)-(2) and (5)-(3), we can change the heights of the singularities and obtain the Morse function shown below (5)-(1)', (5)-(2) and (5)-(3). Then the corresponding edges of weight 2 in the Reeb graph disappear. The cases (5)-(1) and (5)-(3)' may happen.
}\label{fig:upperlower52}
\end{center}
\end{figure}

Now we look at the cases where the edge of weight 2 is adjacent to two edges of weight 1 at the upper end point and  is adjacent to one edge of weight 1 at the lower end point. There are the following cases.
\begin{center}
(4)-(1), (4)-(2), (4)-(3), (5)-(1), (5)-(1)', (5)-(2), (5)-(3), (5)-(3)'. 
\end{center}
By Figures \ref{fig:upperlower42} and \ref{fig:upperlower52} with the captions there, we see that only the cases (4)-(2), (5)-(1) and (5)-(3)' may happen.

The rest are the cases where the edge of weight 2 is adjacent to two edges of weight 1 both at the upper end point and at the lower end point. There are the following cases. See Figure \ref{fig:upperlower55}.
\begin{center}
(4)-(4), (4)-(5), (5)-(5), (5)-(5)'. 
\end{center}
In the cases  (4)-(4) and (5)-(5), we see a disk with boundary on the boundary torus. If their boundary circles are not essential on the boundary torus, they bound disks on the boundary torus. The sphere formed by these disks bound a 3-disk in the knot complement $M$. Thus the disks in  the knot complement $M$ divides $M$ into two parts and the embedding of the knot complement $M$ is given by one of the two parts. Thus in the cases  (4)-(4) and (5)-(5), the number of
edges of weight 2 in the weighted Reeb graph $\mathcal{R}_F^w$ was not minimum. Hence the cases (4)-(5) and (5)-(5)' may happen.

\begin{figure}
\begin{center}
\includegraphics[height=3cm]{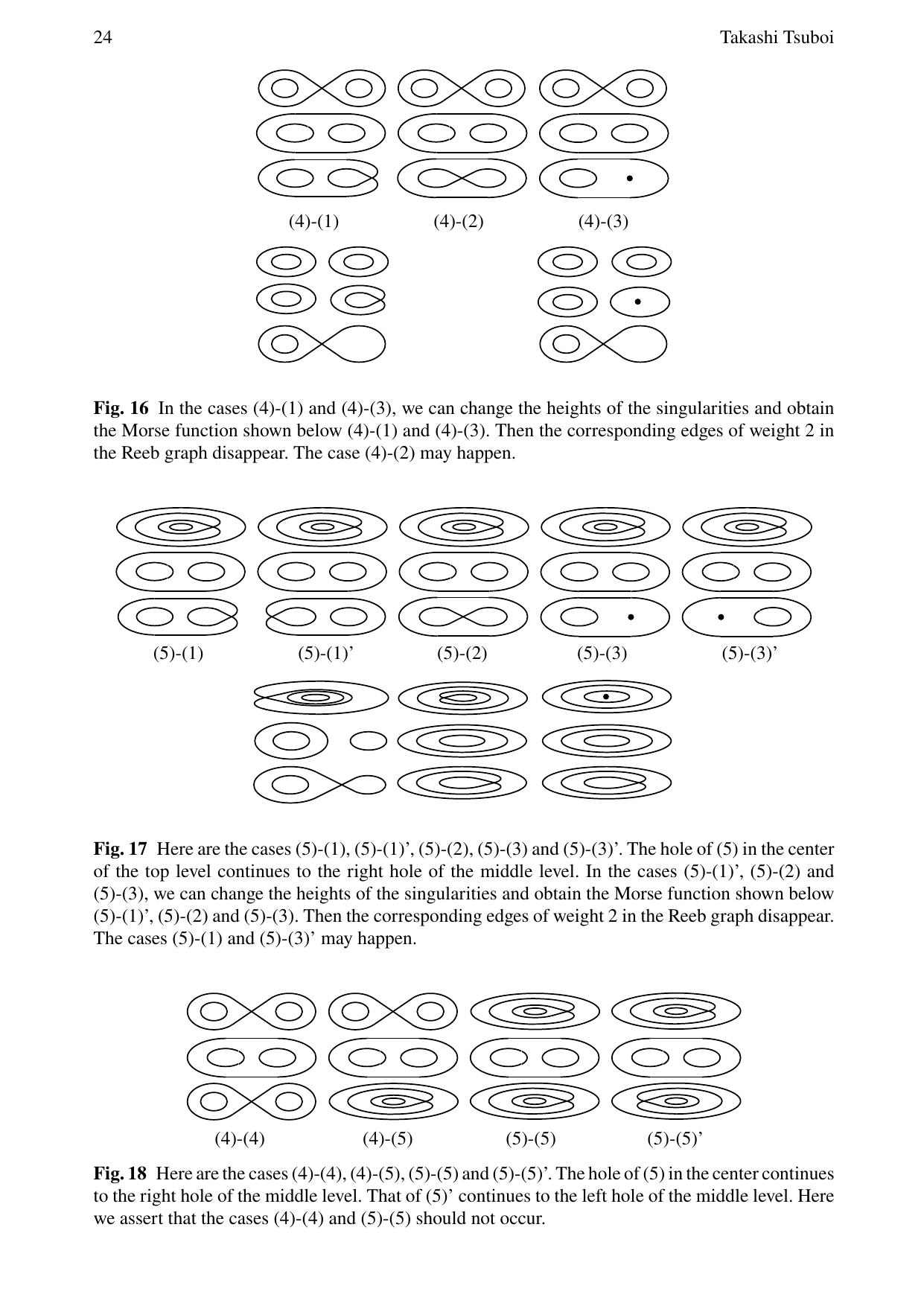}
\caption{Here are the cases (4)-(4), (4)-(5), (5)-(5) and (5)-(5)'. 
The hole of (5) in the center continues to the right hole of the middle level.
That of (5)' continues to the left hole of the middle level. Here we assert that the cases  (4)-(4) and (5)-(5) should not occur.
 }\label{fig:upperlower55}
\end{center}
\end{figure}

\begin{figure}
\begin{center}
\includegraphics[height=3cm]{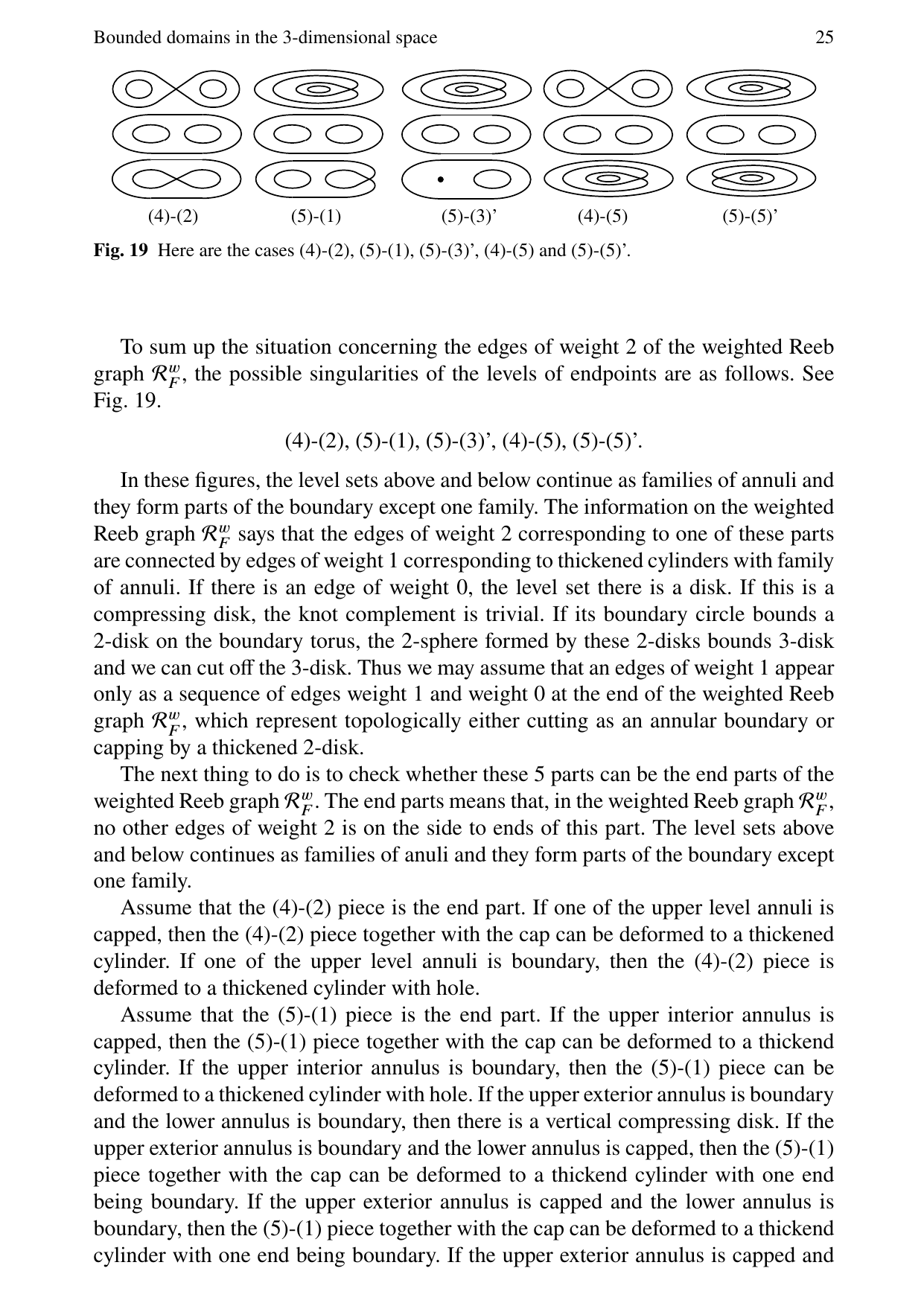}
\caption{Here are the cases (4)-(2), (5)-(1), (5)-(3)', (4)-(5) and (5)-(5)'.
 }\label{fig:upperlower5cases}
\end{center}
\end{figure}

To sum up the situation concerning the edges of weight 2 of the weighted Reeb graph $\mathcal{R}_F^w$, the possible singularities of the levels of endpoints are as follows. See Figure \ref{fig:upperlower5cases}.
\begin{center}
(4)-(2), (5)-(1), (5)-(3)', (4)-(5), (5)-(5)'.
\end{center}

In these figures, the level sets above and below continue as families of annuli and they form parts of the boundary except one family. The information on the weighted Reeb graph $\mathcal{R}^w_F$ says that the edges of weight 2 corresponding to one of these parts are connected by edges of weight 1 corresponding to thickened cylinders with family of annuli. If there is an edge of weight 0, the level set there is a disk. If this is a compressing disk, the knot complement is trivial. If its boundary circle bounds a 2-disk on the boundary torus, the 2-sphere formed by these 2-disks bounds 3-disk and we can cut off the 3-disk. Thus we may assume that an edges of weight 1 appear only as a sequence of edges weight 1 and weight 0 at the end of the weighted Reeb graph $\mathcal{R}^w_F$, which 
represent  topologically either cutting as an annular boundary or capping by a thickened 2-disk.

The next thing to do is to check whether these 5 parts can be the end parts of the weighted Reeb graph $\mathcal{R}_F^w$. The end parts means that, in the weighted Reeb graph $\mathcal{R}_F^w$, no other edges of weight 2 is on the side to ends of this part. The level sets above and below continues as families of anuli and they form parts of the boundary except one family.

Assume that the (4)-(2) piece is the end part.
If one of the upper level annuli is capped, then the (4)-(2) piece together with the cap can be deformed to a thickened cylinder.
If one of the upper level annuli is boundary, 
then the (4)-(2) piece is deformed to a thickened cylinder with hole.

Assume that the (5)-(1) piece is the end part.
If the upper interior annulus is capped, then the (5)-(1) piece together with the cap can be deformed to a thickend cylinder. If the upper interior annulus is boundary, 
then the (5)-(1) piece can be deformed to a thickened cylinder with hole.
If the upper exterior annulus is boundary and the lower annulus is boundary, then there is a vertical compressing disk.  
If the upper exterior annulus is boundary and the lower annulus is capped, 
then the (5)-(1) piece together with the cap can be deformed to a thickend cylinder with one end being boundary.
If the upper exterior annulus is capped and the lower annulus is boundary, then the (5)-(1) piece together with the cap can be deformed to a thickend cylinder with one end being boundary.
If the upper exterior annulus is capped and the lower annulus is capped, then 
the (5)-(1) piece together with the cap can be deformed to a thickened disk.

Assume that the (5)-(3)' piece is the end part.
If the upper interior annulus is capped, then the exterior circle of a level set bounds a disk and we make an edge of weight 2 disappear. 
If the upper interior annulus is boundary,  then the (5)-(3)' piece can be deformed to a thickened cylinder.
If the upper exterior annulus is boundary, then the (5)-(3)' piece can be deformed to a thickened cylinder. 
If the upper exterior annulus is capped and the lower annulus is boundary,
then the (5)-(3)' piece together with the cap can be deformed to a thickened disk.
If the upper exterior annulus is capped and the lower annulus is capped,
then there is a sphere boundary, which should not happen.

Assume that the (4)-(5) piece is the end part.
If the upper left annulus is capped, then the argument follows that of the case of (5)-(3)'. If the upper right annulus is capped, then the argument follows that of the case of (5)-(3).
If the upper left annulus is boundary and the lower exterior annulus is boundary, then we find a compressing disk. If the upper left annulus is boundary and the lower exterior annulus is capped, the (4)-(5) piece together with the cap can be deformed to a thickened thickened cylinder. 
If the upper left annulus and the lower inner annulus are boundary and .
the upper right annulus is also boundary, then we find a compressing disk.
If the upper left annulus is boundary and the lower inner annulus is capped,
then the (4)-(5) piece together with the cap can be deformed not to correspond to an edge of weight 2.
If the upper right annulus and the lower interior annulus are boundary, then the (4)-(5) piece can be deformed to a thickened cylinder with a hole. 
If the upper right annulus is boundary and the lower interior annulus is capped,  then the (4)-(5) together with the cap can be deformed to a thickened cylinder. We can verify that we examined all the necessary cases.

Assume that the (5)-(5)' piece is the end part.
If the upper inner annulus is capped, then then the argument follows that of the case of (5)-(3)'. By symmetry the case where the lower inner annulus is capped is also treated as well. If the the upper inner annulus and the lower inner annulus are boundary, then we can find a compressing disk. If the upper inner annulus, the upper exterior annulus and the lower exterior annulus are boundary, then we can find a compressing disk. If the upper inner annulus and the upper exterior annulus are boundary and the lower exterior annulus is capped, then the (5)-(5)' piece together with the cap can be deformed to a thickened cylinder. If the upper inner annulus and the lower exterior annulus are boundary and the upper exterior annulus is capped, then the (5)-(5)' piece together with the cap can be deformed to a thickened cylinder. If the upper inner annulus is boundary and the upper exterior annulus and the lower exterior annulus are capped, then there is a sphere as a component of $\bd M$.

Thus we have shown that if the weighted Reeb graph $\mathcal{R}_F^w$ with weight less than 3 is a tree, then the bounded domain is not a nontrivial knot complement.
\end{proof}

\section{Visibility of bounded domains}
\label{sec:visibility}
Now we discuss a question related to represent graphically the bounded domain.

The question is whether one can see all of the boundary of a bounded domain from far.
More precisely, we define the visibility as follows:

\begin{definition}
Let $M$ be a bounded domain in $\RR^3$.
A point $p$ of the boundary $\bd M$ is visible if
 there is a half line starting at $p$ which intersects $M$ only at $p$. 
We say $M$ is visible if all points of $\bd M$ are visible. 
\end{definition}

Visibility is not at all stable under isotopy of the bounded domain.
Thus we ask whether a bounded domain is isotopic to a visible bounded domain. 
If the bounded domain is convex, all the boundary points are visible, and for any direction any point is visible in this direction or in the opposite direction. Of course, it becomes difficult to ensure the visibility if the bounded domain $M$ has concave points, and this is the reason why visibility is not at all stable under isotopy.

For a solid torus, if it is the $\varepsilon$-neighborhood of a smooth knot, it is visible.
It is also true that a sufficiently small neighborhood of a spacial graph is visible. Thus embeddings of handlebodies can be isotoped to visible handlebodies.

As we discussed in the last section, if there is the Morse height function $F:M\lra \RR$ such that all inverse images $F^{-1}(z)$ is the union of disjoint disks, $M$ is diffeomorphic to a regular neighborhood of a spacial graph. Thus in this case  $M$  is isotopic to a visible bounded domain.

By the solid torus theorem (Theorem \ref{th:solid_torus_theorem}), the bounded domains $M$ in $\RR^3$ with $\bd M^3=T^2$  are either a tubular neighborhood of a knot in $\RR^3$ or a knot exterior, the complement in $S^3$ of a tubular neighborhood of a knot in $S^3$.

We will use Morse height functions to investigate the visibility question of bounded domains.

\begin{definition}[Minimum number of critical points $\min\#C$]
Let $M$ be a bounded domain. In the isotopy class of the embeddings of $M$ in $\RR^3$, there is an embedding $\iota$ such that the $z$ coordinate is a Morse height function on $\iota(M)$ and the number of the critical points of $z:\iota(M)\lra \RR$ is minimum in 
the isotopy class. We define $\min\#C([M])$ to be this minimum for the isotopy class $[M]$ of $M\subset \RR^3$.
\end{definition}

\begin{proposition}\label{prop:complement}
Let $M$ be a bounded domain with the Morse function minimizing the number of critical points. Then its complementary bounded domain $M^c=\RR^3\cup \{\infty\} \ssm M$ has the same minimum number of critical points as $M$.
\end{proposition}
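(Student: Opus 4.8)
The plan is to manufacture, out of a height function that realizes $\min\#C([M])$, a realization of $M^c$ as a bounded domain whose $z$-coordinate is a Morse height function with the same number of critical points. First I would reduce to a single inequality: complementation is an involution on the (unordered) pair of bounded domains cut out by an embedded closed surface in $S^3$, so $(M^c)^c=M$ up to isotopy, and it therefore suffices to prove $\min\#C([M^c])\leqq\min\#C([M])$; applying that inequality to $M^c$ gives the reverse one, hence equality.

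So fix an embedding $\iota(M)\subset\RR^3$ in its isotopy class for which the $z$-coordinate is a Morse height function with $c:=\min\#C([M])$ critical points; all of them lie on $S:=\bd\iota(M)$ and are precisely the critical points of $z|S$. Viewing $\iota(M)\subset\RR^3\subset S^3=\RR^3\cup\{\infty\}$, the complementary region $M^c=S^3\ssm\op{Int}(\iota(M))$ is a compact $3$-manifold with $\bd M^c=S$, and choosing a point $q\in\op{Int}(\iota(M))$ together with a M\"obius identification $\varphi:S^3\ssm\{q\}\xrightarrow{\ \sim\ }\RR^3$ turns $\varphi(M^c)$ into a bounded domain representing $[M^c]$. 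The idea is then to exploit level-set duality: in the round $2$-sphere $\{z=t\}\cup\{\infty\}$ the slice $z^{-1}(t)\cap M^c$ is the closure of the complement of the planar surface $z^{-1}(t)\cap\iota(M)$, hence is again a disjoint union of disks with holes, and the Reeb graph of $M^c$ is the \emph{same} graph as $\mathcal{R}_F$ for $M$, with the same trivalent vertices and endpoints (only the weights are ``complemented'') — so with the same $c$ critical points of $z|S$. By the reconstruction procedure used for handlebodies in Section \ref{sec:handlebodies} and implicit throughout Section \ref{sec:Morse_height_function} — place each planar slice in a horizontal plane and realize each elementary change at a critical value by its Morse local model — this data is realized by a bounded domain in $\RR^3$ whose $z$-height function is Morse with exactly those $c$ critical points, and this is the copy of $M^c$ we want.

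The step that needs genuine care — and the main obstacle I expect — is the point at infinity. In $\RR^3$ (unlike in $S^2$) the slices $z^{-1}(t)\cap(\RR^3\ssm\iota(M))$ are noncompact, so naively compactifying $M^c$ to a bounded domain seems to cost two extra critical points, a bottom cap and a top cap; equivalently, enclosing $\iota(M)$ in a large round ball $B$ introduces the two critical points of $z|_{\bd B}$ (which, by Theorem \ref{th:Alexander}, is capped by a disk $D^3$ to give $M^c=(B\ssm\op{Int}\iota(M))\cup_{\bd B}D^3$), and one must show these do not persist. The way to handle this is to choose $q$ inside a standardized (round) bottom cap of $\iota(M)$ and to precede everything by an ambient isotopy straightening the two ``stalks'' of $M^c$ that run out to $\infty$ through that cap, so that under $\varphi$ the two critical points of $z|_{\bd B}$ get traded against the inside-out twist rather than added. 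Making this cancellation precise, and verifying that the bounded domain produced by the reconstruction really lies in the class $[M^c]$, is the heart of the argument.
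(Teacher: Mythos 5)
Your overall strategy (pass to $S^3$, use that $\bd M=\bd M^c$ carries the same critical points, reduce to one inequality by the involution $M\mapsto M^c$) points in the right direction, but the proposal has a genuine gap exactly where you say "the heart of the argument" lies: you never actually show that the two cap critical points coming from compactifying the complement can be avoided. Your proposed fix (choosing $q$ in a "standardized round bottom cap" and trading the critical points of $z|_{\bd B}$ against the inside-out twist) is left as an intention, not an argument, so the inequality $\min\#C([M^c])\leqq\min\#C([M])$ is not established. The paper removes this difficulty before it arises, by a specific normalization you are missing: realize the given Morse height function as the restriction of the standard height function $S^3\subset\RR^4\lra\RR$ in such a way that the global maximum and minimum of $F$ on $M$ sit exactly at the North and South poles of $S^3$. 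The ambient function on $S^3$ has no critical points other than the poles, and those poles already lie on $\bd M=\bd M^c$ as critical points of $F|\bd M$; hence its restriction to $M^c$ has no interior critical points and has precisely the same boundary critical points, with the same indices. One then returns to $\RR^3$ by an inversion in a $2$-sphere whose diameter joins the maximum and minimum points, with center isotoped into $\op{Int}(M)$, which exhibits $M^c$ as a bounded domain carrying a Morse height function with the same number of critical points. With this device there is nothing to "cancel."

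A secondary error: your claim that the Reeb graph of $z$ on $M^c$ is "the same graph" as $\mathcal{R}_F$ with complemented weights is false in general. The complementary slice in the level $2$-sphere of several disjoint planar pieces of $z^{-1}(t)\cap\iota(M)$ can be connected (for the standard solid torus, a level with two disks for $M$ gives a single annulus for $M^c$), so the graphs differ; only the set of critical points of $z|S$ is shared. This does not affect the count of critical points, but it does undercut your proposed "reconstruction from the Reeb graph" route, which in any case would still require verifying that the reconstructed domain lies in the class $[M^c]$ --- a verification you also leave open, and which the paper's inversion argument gets for free.
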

\begin{proof}
We lool at $S^3=\RR^3\cup\infty$ with the height function $S^3\subset\RR^4 \lra \RR$ to an axis. Then it has 1 maximum and 1 minimum and other level sets are 2-spheres. 
Let the Morse function $F:M\lra \RR$ is realized as the height function $$M\subset S^3\subset\RR^4 \lra \RR$$ such that the maximum and the minimmum on $M$ is at the  North pole and the South pole of $S^3\subset\RR^4$. Then the complement $M^c$ of $M$ has  the height function $$M^c\subset S^3\subset\RR^4 \lra \RR.$$
These two height functions have the same critical points in $\bd M=\bd M^c$ and their indices coincide. The convexity and concavity of critical points of indices 0/2 alternate except the maximum at the North poke and the minimum at the South pole. Thus $\min\#CT([M])=\min\#CT([M^c])$. 
\end{proof}

\begin{remark}
In $\RR^3$ this level spheres between the maximum point and the minimum point of a Morse height function $F$ is drawn as a conformal image of level spheres of 
the projection to an axis $S^3\subset \RR^4\lra \RR$. Taking the 2-sphere with the diameter being the segement connecting the maximum point and the minimum point and if the center of this sphere is in the interior of $M$ then the image by the inversion with respect to the 2-sphere gives the complement $M^c$ as a bounded domain. We can use an isotopy along level 2-spheres to make the center of the sphere is in the interior of $M$.
\end{remark}

The minimum number of critical points among the Morse functions on a closed connected 2 manifold $S$ only depends on the topology of $S$, and it is equal to $4-\chi(S)$, where $\chi(S)$ is the Euler characteristic number of $S$. The number $\min\#C([M])$ depends on the embedding of $\bd M$ or $M$ in $\RR^3$ and for the embeddings in Figure \ref{fig:mathsoc_trefoil}\  the number is 8.

\begin{proposition}\label{prop:k-bridge}
For the tubular neighborhood of the $k$-bridge knots $M$  and the $k$-bridge knot complements $M^c$, the number is $4k$: $$\min\#CP([M])=\min\#CP([M^c])=4k.$$
\end{proposition}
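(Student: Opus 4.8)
The plan is to reduce Proposition~\ref{prop:k-bridge} to the single statement $\min\#C([N(K)])=4k$, where $K$ is a knot with bridge number $b(K)=k$ and $N(K)$ is a tubular neighbourhood of $K$ (a solid torus). Granting this, the exterior case is immediate: $M^{c}=(\RR^{3}\cup\{\infty\})\ssm N(K)$ is the $k$-bridge knot exterior, so Proposition~\ref{prop:complement} gives $\min\#C([M^{c}])=\min\#C([N(K)])=4k$.

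For $\min\#C([N(K)])\le 4k$, take a bridge presentation of $K$ with $k$ local maxima and $k$ local minima of $z$ and thicken it to a very thin tube $N(K)$. Then $z|_{\bd N(K)}$ is Morse, and its critical points are read off locally: near each maximum of $K$ the two sheets of the tube that ascend towards it meet in one index-$1$ saddle and the resulting disk then shrinks to a point at one index-$2$ critical point; dually, near each minimum of $K$ a disk is born at an index-$0$ critical point and then splits at an index-$1$ saddle; along the remaining monotone arcs of $K$ nothing happens. This produces exactly $k$ critical points of index $0$, $2k$ of index $1$ and $k$ of index $2$, that is, $4k$ in total, with every level set a disjoint union of disks.

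For the reverse inequality, let $F=z|_{\iota(N(K))}$ be an arbitrary Morse height function on an embedding of $N(K)$, with $c_{i}$ critical points of $F|\bd\iota(N(K))$ of index $i$. Since the boundary is a torus, $c_{0}-c_{1}+c_{2}=\chi(T^{2})=0$, so the total number of critical points is $2(c_{0}+c_{2})$, and passing from $F$ to $-F$ interchanges $c_{0}$ and $c_{2}$. It therefore suffices to prove $c_{2}\ge k$, for then the bound applied to $F$ and to $-F$ gives $c_{0},c_{2}\ge k$ and hence at least $4k$ critical points. To prove $c_{2}\ge k$ I would put the core circle $K'$ of $\iota(N(K))$ into convenient position: isotope $K'$, inside $\iota(N(K))$, so that $z|_{K'}$ is Morse and every local maximum of $z|_{K'}$ lies at a critical value of $F$ at which some component of the level set shrinks to a point. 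Concretely, given a local maximum of $z|_{K'}$ in a level-set component that still continues upward, one slides that turnaround up through the component until it caps off, using Proposition~\ref{prop:weight} to see that $z|_{K'}$ can be routed straight through every index-$0$ and index-$1$ critical level and through every index-$2$ level at which a hole merely closes; after fusing any redundant turnarounds lying in the same capping region, each local maximum of $z|_{K'}$ is charged to a distinct index-$2$ critical point of $F$. Since $K'$ is isotopic to $K$ in $\RR^{3}$, this yields a presentation of $K$ with at most $c_{2}$ maxima, so $k=b(K)\le c_{2}$.

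The delicate part is this last step. The hardest point is controlling $K'$ where it runs through the non-disk (annular, or higher weight) fibres of $F$, and here one should use the dichotomy of Proposition~\ref{prop:pi1_torus}(2): when $\pi_{1}(\mathcal{R}_{F})\cong\ZZ$ the loop $K'$ is already essential in the Reeb graph, whereas when $\mathcal{R}_{F}$ is a tree the essential curve $K'$ can only close up by running once around an annular fibre, and one must check that this forced turnaround produces no extra maximum. A second point needing justification is the fusion of two turnarounds lying over a single capping level, which requires an isotopy inside $\iota(N(K))$. Finally one should keep in mind that minimising the number of critical points need not be compatible with minimising the weights, so the argument cannot simply assume that all level sets are disks or annuli.
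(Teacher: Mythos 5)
Your overall architecture is the same as the paper's: the thin tube around a $k$-bridge presentation gives $\min\#C\leq 4k$, Proposition~\ref{prop:complement} transfers the count to the exterior, the relation $c_{1}=c_{0}+c_{2}$ on the boundary torus reduces everything to bounding the extrema, and the lower bound is obtained by playing the core circle of the solid torus against the top-level critical points of $F$. The gap is in the step that actually produces $c_{2}\geq k$. You propose to isotope the core $K'$ inside $\iota(N(K))$ so that every local maximum of $z|K'$ sits in a capping region and then to ``fuse any redundant turnarounds lying in the same capping region'', so that maxima of $K'$ inject into index-$2$ critical points. That fusion is precisely the mathematical content, and no argument is given for it: two maxima of $K'$ lying in the same small ball below a cap are joined along $K'$ by an arc that in general dives far down and winds through the solid torus, and cancelling one of the maxima requires pulling that arc, rel endpoints, up into the cap region --- nothing you say rules out an obstruction. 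The preliminary step (sliding each turnaround upward ``until it caps off'') likewise needs a check that the isotopy of $K'$ in $M$ creates no new maxima elsewhere, and you yourself leave unresolved the forced turnaround when $\mathcal{R}_F$ is a tree. So the key inequality, that the maxima of $K'$ can be made at most $c_{2}$ in number, is asserted rather than proved.

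For comparison, the paper does not normalize the maxima of the core at all. It takes the core $K$ with the minimum number of critical points among curves in its isotopy class \emph{inside} $M$, assigns to each local maximum of $K$ at height $z_{0}$ the top point of the connected component of $M\cap\{z\geq z_{0}\}$ containing it (such a top point is a local maximum of $F$ on $M$, hence an index-$2$ critical point of $F|\bd M$), and argues that this assignment is injective: if two maxima of $K$ received the same top point, monotone ascending curves from them to that point together with an arc of $K$ would bound an embedded triangle along which the number of critical points of $K$ could be decreased, contradicting minimality. Since any curve isotopic to a $k$-bridge knot has at least $k$ maxima, this gives at least $k$ such index-$2$ points, the symmetric argument gives $k$ minima, and the saddle count follows from $\chi(T^{2})=0$, exactly as in your reduction. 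Your proposal would be complete if the fusion step were replaced by an argument of this minimal-position/cancellation type; as written, that is the missing piece.
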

\begin{proof}
For the $k$-bridge knots $K$ themselves $\min\#CP([K])=2k$ by definition.
There are $k$ maximal points and $k$ minimal points for any Morse height function on $K$ minimizing the number of critical points.  
For any tubular neighborhood of the $k$-bridge knots $M$ with the Morse height function $F$ with minimum number of critical points, we can draw the $k$-bridge knot $K$ in the interior of $M$ with minimum number of critical points among the isotopy class of $K$ in $M$. 
For each local maximum $(x_0,y_0,z_0)$ of $K$, we take the connected component of $M\cap \{z\geqq z_0\}$ and take the maximum point of $M\cap \{z\geqq z_0\}$.
This defines a map from the set of maximal points of $K$ to that of $M$. If this map is not injective, there are two maximal points on $K$ and  we have monotone increasing curves form these two points to the maximal point of $M$. These curves are in the solid torus and  one of the arcs on $K$ joining two maximal points on $K$ and the two curves to the maximal point of $M$ bounds an embedded triangle. This means we can make the number of critical points on $K$ samaller. Hence the map is injective, hence thre are at leaset $k$ maximal points of $M$.  In the same way, thre are at least $k$ minimal points. Then since the critical points are on the boundary torus, there are at least $2k$ saddle points. Hence, $\min\#CP([M])\geqq 4k$. Since we have a tubular neighborhood $M$ with the munmer of critical points being equal to $4k$, $\min\#CP([M])=4k$.
\end{proof}

In order to answer the visibility question for the moment it is necessary to 
assume the following hypothesis.

\begin{definition}[Minimum number of critical points (minNCP) hypothesis]
If a bounded domain $M$ can be isotoped to a visible bounded domain, then there is a {\it visible} bounded domain $\iota(M)$ with a Morse height function having the minimum number $\min\# C([M])$ of critical points in the isotopy class $[M]$ of $M\subset \RR^3$.
\end{definition}

It seems that visible bounded domains would not have unnecssary critical points and the minNCP hypothesis would be reasonable. However when we cancel a pair of critical points, it is not clear whether we can do among the visible embeddings. 
Under the minNCP hypothesis,
for the bounded domains whose Morse height functions necessarily have concave critical points of indices 0 and 2, we have the following proposition.

\begin{proposition}\label{prop:not_handlebody_not_visible}
Under the minNCP hypothesis,
if the bounded domain $M$ is a visible bounded domain, 
then $M$ is an embedded handlebody. In particular, under the minNCP hypothesis,
if a knot exterior is visible, then it is an unknotted solid torus.
\end{proposition}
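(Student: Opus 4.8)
The plan is to invoke the minNCP hypothesis to pass to a visible embedding $\iota(M)$ on which the $z$-coordinate is a Morse height function $z\colon\iota(M)\lra\RR$ with exactly $\min\#C([M])$ critical points, and then to show that visibility of $\iota(M)$ forces the weights of the weighted Reeb graph $\mathcal R^w_z$ to be less than $2$. Granting this, Theorem~\ref{th:w<2} (whose hypothesis that $\bd M$ be connected is part of our definition of bounded domain) gives that $M$ is a handlebody, the first assertion. For the ``in particular'' part, if $\bd M$ is a torus then the handlebody $M$ has genus $1$, i.e.\ is a solid torus; alternatively one can avoid Theorem~\ref{th:w<2} here and just note that weights $<2<3$ put us under Theorem~\ref{th:w<3knotExt}, so a visible knot exterior is a solid torus. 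A solid torus whose complementary domain in $S^3=\RR^3\cup\{\infty\}$ is again a solid torus realizes the genus-one Heegaard splitting of $S^3$, so its core is an unknot and $M$ is an unknotted solid torus.

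To set up the weight estimate I would first translate critical data of $z|\bd\iota(M)$ into the geometry of the complementary domain $M^c$, the closure in $S^3$ of $S^3\ssm\iota(M)$, a compact $3$-manifold with $\bd M^c=\bd\iota(M)$ containing $\infty$ in its interior (cf.\ Proposition~\ref{prop:complement}). A hole of a component of a regular level set $z^{-1}(t)$ is bounded in the plane $\{z=t\}$ by a circle on $\bd\iota(M)$ and thus determines a properly embedded horizontal disk $D$ in $M^c$ with $D^\circ\subset\op{Int}(M^c)$, while a concave critical point of index $0$ or $2$ of $z|\bd\iota(M)$ is the apex of a ``pocket'' of $M^c$ reaching into $\iota(M)$. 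If such a disk $D$ had boundary inessential on $\bd\iota(M)$, capping it with a disk on the surface would give a $2$-sphere, which by Theorem~\ref{th:Alexander} bounds balls on both sides; pushing $z$ across the appropriate ball cancels a pair of critical points, contradicting minimality. Hence every such $D$ is a compressing disk for $M^c$, and surgery along it (Section~\ref{sec:3}) replaces $M^c$ by a strictly simpler domain while attaching a $1$-handle to $\iota(M)$.

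The heart of the proof is to show that, when $\iota(M)$ is visible and $z$ has the minimal number of critical points, no level component can have two or more holes. A visibility ray $r_p$ cannot meet $\bd M^c$ away from $p$, so $r_p\ssm\{p\}\subset\op{Int}(M^c)$; being a straight ray to infinity it is eventually monotone in $z$ and eventually escapes every bounded pocket of $M^c$. Tracking such rays level by level, and using that $\iota(M)$ is compact (so every pocket of $M^c$ is bounded above and below) while $M^c$ is connected, one rules out the configurations that two persistent holes would require: a hole born going up at a concave index-$0$ point and killed at a concave index-$2$ point would cut off a bounded component of $M^c$, which is impossible; and a wall of $\bd M^c$ separating two holes would force all of its points to emit their rays into the two adjacent pockets, which cannot happen for every wall point unless those pockets eventually open onto the unbounded part of $M^c$ in a way that, via the horizontal compressing disks above, lets one either cancel a pair of critical points or strictly decrease the weights of $\mathcal R^w_z$. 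Running this as an induction on the number of critical points ordered by the weights, one reaches weights $<2$.

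The step I expect to be the genuine obstacle is precisely the one flagged in the paper right after the minNCP hypothesis: a birth--death cancellation of a pair of critical points, or a compression along a horizontal disk, is a priori an isotopy of $\iota(M)$ that need not pass through visible embeddings. The real content of the argument is to realize these particular moves as \emph{visible} isotopies --- intuitively, the pocket being removed is reachable from infinity at each of its boundary points, so there is room to collapse it outward --- and to organize the bookkeeping (which pocket to kill first, why the weights strictly drop, why the process terminates). Once that is done, the torus case follows as explained, either from Theorem~\ref{th:w<2} or, with room to spare, from Theorem~\ref{th:w<3knotExt}.
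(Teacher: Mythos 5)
There is a genuine gap: the pivotal claim on which your whole reduction rests --- that visibility of $\iota(M)$ together with the minNCP-minimal number of critical points forces every weight of $\mathcal{R}^w_z$ to be less than $2$ --- is never actually proved (the ``tracking rays level by level'' paragraph is a sketch whose individual assertions, e.g.\ that a hole born at a concave index-$0$ point and dying at a concave index-$2$ point cuts off a bounded component of $M^c$, are not justified), and it is in fact false. Take the unknotted genus-$2$ handlebody embedded as a horizontal slab with two slightly tilted tunnels drilled through it: the height function $z$ has exactly $6$ critical points (one convex minimum, four saddles, one convex maximum), which equals $4-\chi(\varSigma_2)=\min\#C$ for this isotopy class, every boundary point is visible (rays escape along the tilted tunnels or outward), yet the middle level sets are pairs of pants, so the weighted Reeb graph has an edge of weight $2$. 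So visibility plus minimality does not push the weights below $2$, and the route through Theorem~\ref{th:w<2} collapses at its first step; Theorem~\ref{th:w<3knotExt} could only rescue the torus-boundary case, and even there you would still owe a proof that weights stay below $3$.

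The paper's proof is structured differently: it does not bound weights at all, but uses Theorem~\ref{th:without_concave_critial_points} (no concave critical points of index $0$ or $2$ implies embedded handlebody) and then, assuming $M$ is not a handlebody, traces the basin growing out of a concave index-$0$ critical point through successive critical levels. A case analysis shows that visibility directly produces an invisible critical point in the ``interior merge'' cases, minimality of the critical point count (supplied by minNCP) kills the cases where a pair of critical points could be cancelled, sphere components kill the remaining degenerate cases, and the only surviving case strictly increases the number of level circles, which is bounded --- contradiction. Note also that your closing paragraph misreads the role of the minNCP hypothesis: its purpose is precisely so that the cancelling isotopies need \emph{not} pass through visible embeddings --- one only needs an embedding, visible or not, with fewer critical points than $\min\#C([M])$ to contradict minimality --- so ``realizing the moves as visible isotopies'' is not the missing content; the missing content is a correct mechanism by which visibility constrains the configuration of critical points, which in the paper is the concave-critical-point basin argument rather than any weight bound.
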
 

In order to ensure the existence of invisible point, we look at more closely the configulation of critical points of the Reeb graph $\mathcal{R}_{F|\bd M}$.

\begin{definition}
The critical points of indices 0 and 2 are either convex or concave which means the point is convex or concave as the boundary of the bounded domain $M$.  
\end{definition}

\begin{remark}
We can read the values of convex or concave critical points of index 0 or 2 from the weighted indexed Reeb graph $\mathcal{R}_F^{wi}$. 
\end{remark}

First we show that if there are no concave critical points of index 2, $M$ is an embedded handlebody.

\begin{theorem}\label{th:without_concave_critial_points}
For a connected boundary connected bounded domain $M$ in $\RR^3$, let $F:M\lra \RR$ be a Morse height function. If there are no concave critical points of index 2 of $F|\bd M$, then $M$ is an embedded handlebody. The same is true if there are no concave critical points of index 0 of $F|\bd M$.
\end{theorem}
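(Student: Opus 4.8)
The plan is to extract from the Morse height function $F$ a handle decomposition of $M$, built by sweeping upward, and to show that the hypothesis forbids all handles of index $2$ and $3$. First I would perturb $F$ slightly --- say by a small rotation of $M$, which changes neither the isotopy class of $M$ nor the hypothesis, since the index and the convexity or concavity of each critical point of $F|\bd M$ are stable --- so that the critical values of $F|\bd M$ are pairwise distinct. Writing $M_t = F^{-1}((-\infty,t])$, the diffeomorphism type of $M_t$ changes only as $t$ crosses a critical value, and then by a single elementary modification determined by the local model of $(M,z)$ near the corresponding critical point $p$ of $F|\bd M$. Since $F$ has no interior critical points and $p$ lies on the surface $\bd M$, there are exactly six local models: one for each index $\lambda\in\{0,1,2\}$ of $p$ and each of the two sides of $\bd M$ on which $M$ may lie near $p$.

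The heart of the argument is the resulting dictionary, which I would verify by the same elementary local computations that underlie Figures \ref{fig:contour}, \ref{fig:bifurcate} and \ref{fig:D2-2D2}. At a \emph{convex} critical point of index $0$ a new ball component of $M_t$ is born (a $0$-handle). At an index-$1$ critical point at which $M$ lies \emph{above} $\bd M$ near $p$ (equivalently, the outward normal of $\bd M$ at $p$ points downward), the piece of $M$ lying between two consecutive levels is a ball glued to $M_{t^-}$ along two disjoint disks, so a $1$-handle is attached. At a \emph{concave} critical point of index $2$ that piece is a ball glued to $M_{t^-}$ along an annulus, namely a collar in the level surface $F^{-1}(t^-)$ of the circle on $\bd M$ that encircles the concave local maximum and bounds a hole of a component of $F^{-1}(t^-)$; thus a $2$-handle is attached. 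In each of the remaining three cases --- concave index $0$, index $1$ with $M$ below $\bd M$, convex index $2$ --- the added piece is a ball glued along a single disk, so $M_t$ is unchanged. In particular no $3$-handle ever occurs.

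Granting the dictionary the conclusion is immediate. If $F|\bd M$ has no concave critical point of index $2$, then $M$ (which is $M_t$ for $t$ large) is assembled from below using only handles of index $0$ and $1$. Since $M$ is connected, all but one of the $0$-handles can be cancelled against $1$-handles, leaving a $3$-ball to which $1$-handles are attached; this is a handlebody, and since $M\subset\RR^3$ it is an embedded handlebody, that is, a regular neighborhood of a spatial graph. For the variant with no concave critical point of index $0$: replacing $z$ by $-z$ interchanges the indices $0$ and $2$ of the critical points of $F|\bd M$ while preserving their convexity and concavity, so that case follows by applying what has just been proved to the Morse height function $-z$ on $M$.

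The step I expect to be the main obstacle is establishing the dictionary, and within it the index-$1$ case: one must check that the two possible sides of $\bd M$ give, respectively, a genuine $1$-handle and no change at all, and never a $2$-handle, and one must identify carefully, in each of the three ``no change'' cases, the region of $M$ between consecutive levels together with the part of $\partial M_{t^-}$ along which it is attached, so as to be certain that these modifications, although they alter the level surface $F^{-1}(t)$, do not change the diffeomorphism type of $M_t$. The bookkeeping in the $0$-handle / $1$-handle cancellation at the end is routine once the dictionary is available.
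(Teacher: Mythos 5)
Your proposal is correct, and it sweeps upward through the same six local models at critical points of $F|\bd M$ that the paper analyzes, but it finishes by a genuinely different mechanism. The paper tracks only the homotopy type of the sublevel sets $F^{-1}((-\infty,z])$, argues that under the hypothesis it always remains that of a $1$-dimensional complex, and then exhibits an explicit spine (stable arcs of the index-$1$ points with downward outward normal, vertical segments below concave minima, and connecting gradient flow lines) of which $M$ is claimed to be a regular neighborhood; it also normalizes $F$ to have the minimal number of critical points in its isotopy class, which the statement does not require. You instead upgrade the same local analysis to a handle decomposition in the sense of Morse theory for manifolds with boundary: $0$-handles at convex minima, $1$-handles at index-$1$ points with downward outward normal, $2$-handles exactly at concave maxima, never a $3$-handle, and no change in the three outward-gradient cases; the hypothesis then excludes all $2$-handles, elementary $0$/$1$-handle cancellation gives a handlebody, and the index-$0$ clause follows from the index-$2$ clause by the symmetry $z\mapsto -z$, which the paper leaves implicit. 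Your route buys a cleaner ending: it bypasses both the passage from \lq\lq homotopy type of a graph\rq\rq\ to \lq\lq handlebody\rq\rq\ and the somewhat sketchy spine construction, at the price of verifying the attaching dictionary, which is exactly the content of the paper's case-by-case discussion and of Figures \ref{fig:contour}, \ref{fig:bifurcate} and \ref{fig:D2-2D2}. One small imprecision, which you flag yourself: in the concave index-$0$ case the component of $F^{-1}([t-\varepsilon,t+\varepsilon])$ containing the critical point is not a ball glued along one disk but a product over the lower level surface with a dimple carved into its top face; it is attached along its entire bottom, and the diffeomorphism type of $M_t$ is indeed unchanged, so the conclusion is unaffected.
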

\begin{proof}
Assume that the Morse height function $F$ has minimum number of critical points among the isotopy class of the bounded domain $M$ with no concave critical points of index 0.

We look at the minus gradient flow of the Morse function restricted to $\bd M$.
For the minus gradient flow, there are finitely many orbits which does not goes to the local minimum points. They are the local maximum points and the orbits which goes to saddle points. We may assume that the critical points of $F|\bd M$ have distinct (critical) values and there are no orbits connecting two saddles. 

We look at the change in shape of $F^{-1}((-\infty,z])$ as $z$ varies from the minimum value to the maximum value.
At the minimum value $z$ of $F|\bd M$, the critical point $(x,y,z)$ is of index 0 and convex. Hence in a neighborhood of $(x,y,z)$, $F^{-1}((-\infty,z+\varepsilon])$ for small positive $\varepsilon$ is a 3-disk, and it is contractible. 

As $z$ increases, we may have critical points of index 0, critical points of index 1 or convex critical points of index 2. We look at what happens as $z$ passes through the critical values in each case.

When $z$ passes through the critical value of a convex critical point of index 0, a 3-disk appears in $F^{-1}((-\infty,z+\varepsilon])$ 
and a contractible connected component is added. 
When $z$ passes through the critical value of a concave critical point $(x,y,z)$ of index 0, there appears a hole in $F^{-1}((-\infty,z+\varepsilon])$ with the deepest point at the critical point $(x,y,z)$, and the homotopy type of $F^{-1}((-\infty,z+\varepsilon])$ is the same as that of $F^{-1}((-\infty,z-\varepsilon])$.  $F^{-1}((-\infty,z-\varepsilon])$ is a deformation retract of $F^{-1}((-\infty,z+\varepsilon])$.

When $z$ passes through the critical value of a critical point $(x,y,z)$ of index 1, there are two cases; either the outward normal vector at $(x,y,z)$ is upward or it is downward. 

If the outward normal vector at the critical point $(x,y,z)$ is upward, the level set $F^{-1}(z-\varepsilon)$ near $(x,y,z)$ splits into two parts to the level set $F^{-1}(z+\varepsilon)$ near $(x,y,z)$. This does not change the homotopy type, that is, the homotopy type of $F^{-1}((-\infty,z+\varepsilon])$ is the same as that of  $F^{-1}((-\infty,z-\varepsilon])$. $F^{-1}((-\infty,z-\varepsilon])$ is a deformation retract of $F^{-1}((-\infty,z+\varepsilon])$.

If the outward normal vector at the critical point $(x,y,z)$ is downward, the two components of the level set $F^{-1}(z-\varepsilon)$ near $(x,y,z)$ becomes one connected component of the level set $F^{-1}(z+\varepsilon)$. This corresponds to attaching a solid 1-handle to $F^{-1}((-\infty,z-\varepsilon])$. This solid 0ne handle can be seen as a collar neighborhood of the band 1-handle attached to $(F|\bd M)^{-1}(-\infty,z-\varepsilon])$. 
The homotopy type of $F^{-1}((-\infty,z+\varepsilon])$ is that of the union of 
  $F^{-1}((-\infty,z-\varepsilon])$ and the stable manifold (arc) of the critial point $(x,y,z)$ of the gradient flow for $F|\bd M$ on $\bd M$. Thus if  $F^{-1}((-\infty,z-\varepsilon])$ has the homotopy type of 1-dimensional cellar complex then  $F^{-1}((-\infty,z+\varepsilon])$ also has the homotopy type of 1-dimensional cellar complex.  This attaching 1-handle causes either making separated components connected or forming a circle which is non-trivial in the fundamental group. 

When $z$ passes through the critical value of a convex critical point $(x,y,z)$ of index 2, then this is a maximal point and the homotopy type of $F^{-1}((-\infty,z+\varepsilon])$ is the same as that of  $F^{-1}((-\infty,z-\varepsilon])$. $F^{-1}((-\infty,z-\varepsilon])$ is a deformation retract of $F^{-1}((-\infty,z+\varepsilon])$.

These local arguments show that the bounded domain $M$ has the homotopy type of 1 dimensional cellar complex. This implies that $M$ is isotopic to a handlebody or a regular neighborhood of a spacial graph. 

More explicitly, 
the 1-dimensional cellar complex is the union of following parts:
\begin{itemize}
\item 
the stable manifolds (arc) of the critial points  $(x,y,z)$ of index 1 with downward outward normal vector of the gradient flow for $F|\bd M$ on $\bd M$.
\item
when such a stable manifolds is an unstable arc of a concave critical point $p_1$ of index 0, the vertical line segment $p_2p_1$ in $M$ with $p_1$, $p_2\in \bd M$, and the flow line of the gradient flow for $F|\bd M$ from a convex critial point $p_3$ of index 0 to $p_2$. Here we note that generically this flow line is not from a critial point of index 1.
\end{itemize}

This 1-dimensional cellar complex is a deformationretract of $M$ and $M$ is isotopic to a regular neighborhood of this 1-dimensional cellar complex.
\end{proof}

\begin{corollary}
For a bounded domain $M$, we can find several simple curves $\{c_i\}$ in $M$ joining two points of $\bd M$ so that $M$ with tubular neighborhoods of the curves $\{c_i\}$ removed is an embedded handlebody.
We can find several simple curves $\{c'_j\}$ simultanuously in the complement $M^c$ joining two points of $\bd M^c=\bd M$ 
so that $M^c$ with tubular neighborhoods of the curves $\{c'_j\}$ removed is an embedded handlebody. 
We can take the curves $\{c_i\}$ and $\{c'_j\}$ disjointly, and 
$M$ with tubular neighborhoods of the curves $\{c_i\}$ removed and
 tubular neighborhoods of the curves $\{c'_j\}$ attached is an embedded handlebody, so is $M^c$ with tubular neighborhoods of the curves $\{c'_j\}$ removed and  tubular neighborhoods of the curves $\{c_i\}$ attached.
This implies by the theorem of Waldhausen \cite{Waldhausen}, the handlebodies give the canonical Heegaard decomposition of $S^3$.
\end{corollary}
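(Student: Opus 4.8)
The plan is to read the obstruction to $M$ being an embedded handlebody off a Morse height function, to destroy it by drilling out arcs, and then to invoke Theorem~\ref{th:without_concave_critial_points}. Fix a Morse height function $F\colon M\lra\RR$. By Theorem~\ref{th:without_concave_critial_points}, $M$ is already an embedded handlebody unless $F|\bd M$ has concave critical points of index~$2$; let $p_1,\dots,p_m$ be these points. Each $p_k$ is the tip of a spike of the complement $M^c$ reaching upward into $M$, with solid $M$ lying immediately above $p_k$. For each $k$ I would choose a simple arc $c_k\subset M$ starting at $p_k$, running monotonically upward with respect to $z$ on an entire tubular neighbourhood of itself, and leaving $M$ transversally through $\bd M$ at a second point; by a general-position argument the $c_k$ can be taken pairwise disjoint, disjoint from the critical set of $F$, and with $2m$ distinct endpoints on $\bd M$. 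In particular each $c_k$ is a simple curve in $M$ joining two points of $\bd M$.

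Removing the interiors of closed tubular neighbourhoods $N(c_k)$ from $M$ ``opens the roof'' over each $p_k$: in $M_1:=M\ssm\bigcup_k\op{Int}(N(c_k))$ the spike of $M^c$ at $p_k$ now runs straight through the drilled tunnel, so a small perturbation of $F$ is a Morse height function on $M_1$ — again a bounded domain, its boundary being $\bd M$ with $m$ handles attached — having no concave critical point of index~$2$: the $p_k$ have disappeared, and a $z$-monotone drilling creates only index-$1$ critical points and convex critical points of indices $0$ and $2$. By Theorem~\ref{th:without_concave_critial_points}, $M_1$ is an embedded handlebody, which is the first assertion. The same construction applied to the bounded domain $M^c$ — using Proposition~\ref{prop:complement} to equip $M^c$ with a Morse height function and drilling out its concave index-$2$ critical points — yields pairwise disjoint simple arcs $c'_1,\dots,c'_n\subset M^c$ with endpoints on $\bd M^c=\bd M$ such that $M^c_1:=M^c\ssm\bigcup_j\op{Int}(N(c'_j))$ is an embedded handlebody; this is the second assertion. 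Since the $c_i$ lie in $M$ and the $c'_j$ lie in $M^c$, they can meet at most on $\bd M$, so after one final generic adjustment of endpoints the whole collection $\{c_i\}\cup\{c'_j\}$ is pairwise disjoint.

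Finally, $H_1:=M_1\cup\bigcup_j N(c'_j)$ is obtained from the embedded handlebody $M_1$ by attaching the $1$-handles $N(c'_j)$ along $\bd M_1$ (the cores $c'_j$ lie in $M^c$, hence outside $M_1$, and their feet lie on $\bd M_1$ after the generic adjustment); attaching a $1$-handle to an embedded handlebody amounts to adjoining an edge to a spine, so the result is again a regular neighbourhood of a graph, hence an embedded handlebody. Thus $H_1$ is an embedded handlebody, and symmetrically so is $H_2:=M^c_1\cup\bigcup_i N(c_i)$. By construction $H_1\cup H_2=S^3$ and $H_1\cap H_2=\bd H_1=\bd H_2$ is a closed surface, so $(H_1,H_2)$ is a Heegaard splitting of $S^3$; by Waldhausen's theorem~\cite{Waldhausen} every such splitting is standard, hence $(H_1,H_2)$ is the canonical Heegaard decomposition of its genus. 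The one step demanding care is the middle one: the drilling arcs must be routed so that the drilled domains genuinely admit Morse height functions with \emph{no} concave critical point of index~$2$ — each arc should cancel exactly one offending critical point without manufacturing a new one — while keeping all arcs of both families mutually disjoint; the general-position arguments, the $1$-handle attachments, and the appeal to Waldhausen's theorem are then routine.
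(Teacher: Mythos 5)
Your proposal is correct and follows essentially the same route as the paper: drill $z$-monotone tunnels upward from the concave index-$2$ critical points of $F|\bd M$ (and likewise for $M^c$, via Proposition~\ref{prop:complement}) so that Theorem~\ref{th:without_concave_critial_points} applies to the drilled domains, then attach each family of tubular neighborhoods as $1$-handles to the opposite drilled piece to obtain a Heegaard splitting of $S^3$ and invoke Waldhausen. The only cosmetic difference is that the paper uses one Morse height function $F:S^3\lra\RR$ restricting to both $M$ and $M^c$, while you equip the two sides separately; your extra remarks on why the drilling creates no new concave index-$2$ points just make explicit what the paper leaves implicit.
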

 
\begin{proof}
We use the Morse height function  $F:S^3\lra \RR$ as in Proposition \ref{prop:complement}. For the Morse height functions $F|M:M\lra \RR$ and $F|M^c:M^c\lra \RR$, if it has concave critical points of index 2, we can dig holes along the curves $\{c_i\}\subset M$ or $\{c'_j\}\subset M^c$ upward from the concave critical points of index 2 to the boundary and the resultant bounded domain becomes an embedded handlebody by Theorem \ref{th:without_concave_critial_points}. We can arrange the exits $\{\bd c_i\}$ and $\{\bd c'_j\}$ of the holes are disjoint on $\bd M=\bd M^c$. In this way we remove several 2-handles which are tubular neighborhoods of $\{c_i\}$  from $M$ or of $\{c'_i\}$  from $M^c$ to obtain embedded handlebodies. Then we attach several 1-handles $\DS\bigcup_i N(c_i)$ which are tubular neighborhoods of $\{c_i\}$ to $\DS M^c\ssm \bigcup_j N(c'_j)$, and 
we attach several 1-handles $\DS \bigcup_j N(c'_j)$ which are tubular neighborhoods of $\{c'_j\}$ to $\DS M\ssm \bigcup_i N(c_i)$. Then both 
$$\text{$\DS (M^c\ssm \bigcup_j N(c'_j))\cup \DS\bigcup_i N(c_i)$ and 
$\DS (M\ssm \bigcup_i N(c_i))\cup \DS\bigcup_j N(c'_j)$}$$ are handlebodies.
This gives a Heegaard splitting of $S^3$ and by the result by 
the result of Waldhausen \cite{Waldhausen}, the handlebodies give the canonical Heegaard decomposition of $S^3$.
\end{proof}

\begin{remark}
The minimum number of 2-handles in $M$ such that $M$ with the 2-handles removed is an embedded handlebody is called the tunnel number and studied mainly for the knot complements. For the knot complement, as is well konwn and as we saw in Proposition \ref{prop:k-bridge}, the tunnel number is less than the bridge number.
\end{remark}

\vskip5mm

\textit{Proof} {\textit{of Proposition \ref{prop:not_handlebody_not_visible}.}}
Let $M$ be a bounded domain which is not an embedded handlebody. We assume that $M$ is visible. 
By Theorem \ref{th:without_concave_critial_points}, a Morse height function $F:M\lra \RR$ should have concave critical points of index 0 and index 2.
Under the minNCP hypothesis, we may assume that $M$ is visible and the number of critical points is minimal among the isotopic embeddings of $M$ in $\RR^3$, which means the number of critical points cannot be made smaller by an isotopy.

We take a concave critical point $(x_0,y_0,z_0)$ of index 0 and we look at the Morse function $F|\bd M$. For the heights in $(z_0,z_0+\varepsilon)$ for some small positive $\varepsilon$, we see the concentric circles developing from $(x_0,y_0,z_0)$. If these circles goes up to a critical point of index 2, this part of boundary is a 2-sphere and this will not happen.
Hence, as we increase $z$, these circles meet the first critical point of index 1 (or 2).

\begin{figure}
\begin{center}
\includegraphics[height=4cm]{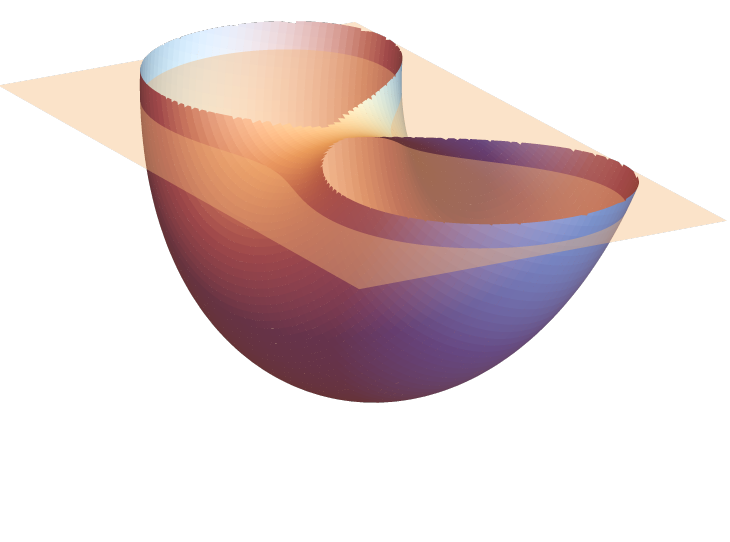}

\includegraphics[height=4cm]{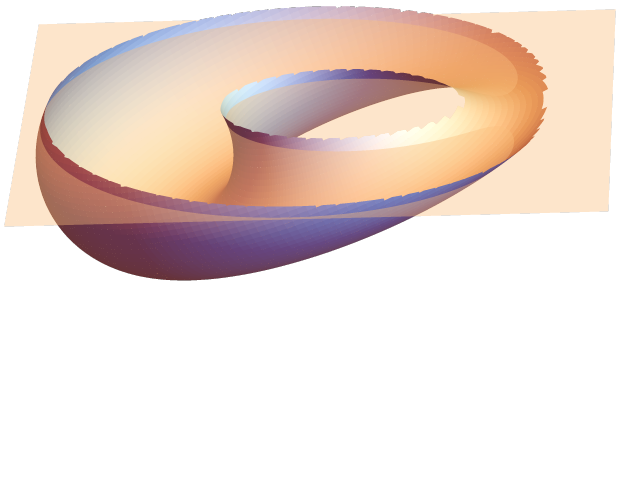}

\includegraphics[height=4cm]{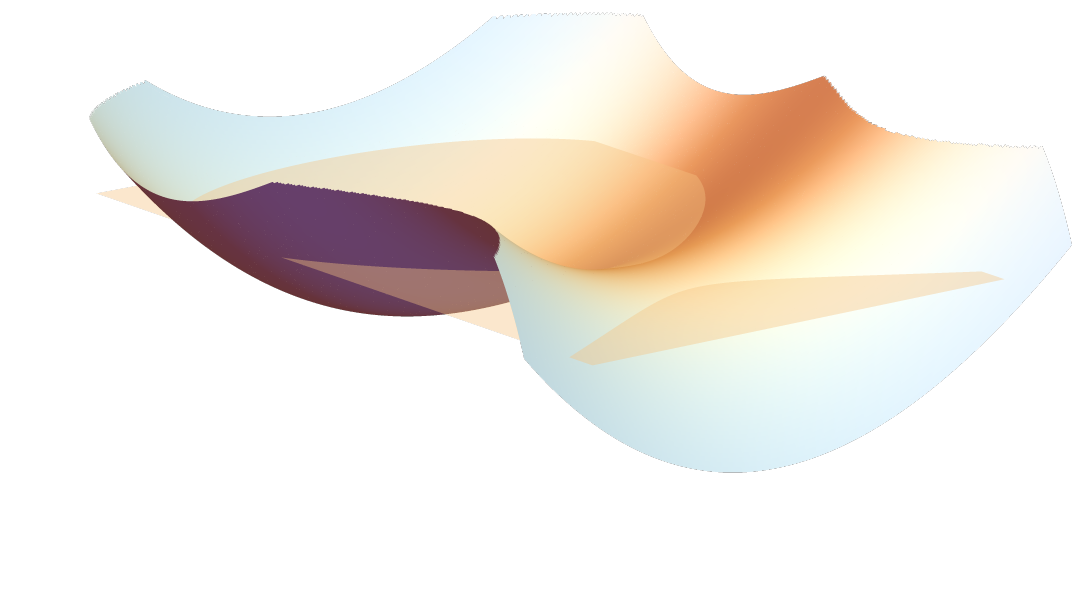}

\caption{The cases (1), (2), (3) in the 5 cases.
 }\label{fig:3cases}
\end{center}
\end{figure}

There are 5 cases: See Figure \ref{fig:3cases}.
\begin{itemize}
\item[(1)]\ 
 two points of the circle comes closer in the interior of the circle.
\item[(2)]\ 
 two points of the circle comes closer in the exterior of the circle.
\item[(3)]\ 
 another circle in the exterior of the concentric circles comes closer and meet at a point.
\item[(4)]\ 
 another circle in the interior of the concentric circles comes closer and meet at a point.
\item[(5)]\ the circle converges to a critical point of index 2.
\end{itemize}

In the case (1), the critical point $(x_1,y_1,z_1)$ of the next height is of index 1 and the outward normal vector is downward.  This implies that  cannot be seen. This is because the tangent plane at $(x_1,y_1,z_1)$ is horizontal and 
the point $(x_1,y_1,z_1)$ can only be seen from below of the horizontal tangent plane. Since the union of concentric circles and the horizontal tangent plane bound a 3-disk, if we consider the part below $z_1+\varepsilon$, the point $(x_1,y_1,z_1)$ cannot be seen from the two disks on the horizontal plane of height $z_1+\varepsilon$ bounded by the two level circles. See the top figure in Figure \ref{fig:3cases}.

In the case (4), in order to have a circle in the interior of the concentric circles, we need to have a convex critical point $(x_2,y_2,z_2)$ of index 0 between the heights $z$ and $z_1$. Since the tangent plane $(x_2,y_2,z_2)$ passes through the level circle, the point $(x_2,y_2,z_2)$ cannot be seen.

In the case (3), we can modify the embedding as follows. Note that 
each concentric circle bounds a 2-disk on the horizontal plane and this 2-disk does not intersect other part of $M$, for otherwise, we should have a 
convex critical point of index 0 on some plane lower than $z_1$ and this critical point cannot be seen. Then we take the 3-disk bounded by the union of concentric circles and the horizontal plane of height $z_1$. We can use this 3-disk to isotope the basin filled with concentric circles to a disk a little higher than $z_1$ and  eliminate the pair of critical points of index $0$ and index 1, and this contradicts the minimality of the number of critical points. See the bottom figure in Figure \ref{fig:3cases}.

On the case (5), we find 2-sphere as a boundary component and this does not happen.

Thus by the visibility assumption and the minNCP hypothesis, only the case (2) happens. 

In the case (2), the critical point $(x_1,y_1,z_1)$ of the next height is of index 1 and the outward normal vector is upward. Then the level set $(F|\bd M)^{-1}(z_1+\varepsilon)$ contains two circles which bound an annulus on $\bd M$ which contains $(x_0,y_0,z_0)$ and $(x_1,y_1,z_1)$. These two circles also bound an annulus on the horizontal plane and this annulus does not intersect the other part of $M$. See the middle figure in Figure \ref{fig:3cases}. Then we look at the basin developed from this annulus. As $z$ increases, there are 5 cases and another case 
as we looked at the the basin developed from the concave critical point.
\begin{itemize}
\item[(1)]\ 
 two points of the bundary of the annulus come closer in the interior of annulus.
\item[(2)]\ 
 two points of the bundary of the annulus come closer in the exterior of the annulus..
\item[(3)]\ 
 another circle in the exterior of the annulus comes closer and meet at a point.
\item[(4)]\ 
 another circle in the interior of the annulus comes closer and meet at a point.
\item[(5)]\ 
the exterior boundary circle reaches a critical point of index 2. 

\item[(6)]\ 
the interior boundary circle reaches a critical point of index 2. 
\end{itemize}

In the cases (1) and (4), we find a critical point of $F|\bd M$ which cannot be seen.

In the case (3), as in the previous case (3), we can eliminate critical points. Let $(x_2,y_2,z_2)$ be the next critical point. 
$(F|\bd M)^{-1}(z_2-\varepsilon)$ contains two circles which bounds an annulus on $\bd M$ which contains $(x_0,y_0,z_0)$ and $(x_1,y_1,z_1)$. These two circles bound an annulus on the horizontal plane which does not intersect other part of $M$, and by using the solid torus bounded by these two annuli, we can eliminate the critical point $(x_2,y_2,z_2)$ with the concave critical point of $(x_0,y_0,z_0)$ index 0. Here we use the visibility assumption and the minNCP hypothesis.

In the case (5), the circles form a hemisphere and we should have a lower critical point in the interior of the 3-ball bounded by this hemisphere horizontal plane.

In the case (6), the height of the critical pint of index 2 can be made lower and we can cancel it with the critical point of index 1 in the anuulus. 

Thus by the visibility assumption and the minNCP hypothesis, in this case again only the case (2) happens.

In the case (2) for the boundary of annulus, for the next critical point $(x_2,y_2,z_2)$, the level set $(F|\bd M)^{-1}(z_2+\varepsilon)$ contains a union of 3 circles which bound a disk with 2 holes on $\bd M$ and on the horizontal  plane.

In this way, we can proceed to increase $z$ and look for the next critical point.  
That is, we have the level set $F^{-1}(z)$ which is a 2-disk with $k$ holes $\varSigma_{0,k}$ and look at the  higher level sets.

There always appear the following cases (1)-(6).
\begin{itemize}
\item[(1)]\ 
 two points of the bundary of $\varSigma_{0,k}$ come closer in the interior of $\varSigma_{0,k}$.
\item[(2)]\ 
 two points of the bundary of $\varSigma_{0,k}$ come closer in the exterior of $\varSigma_{0,k}$.
\item[(3)]\ 
 another circle in the exterior of $\varSigma_{0,k}$ comes closer and meet at a point.
\item[(4)]\ 
 another circle in the interior of $\varSigma_{0,k}$  comes closer and meet at a point.
\item[(5)]\ 
the exterior boundary circle of $\varSigma_{0,k}$ reaches a critical point of index 2. 

\item[(6)]\ 
an interior boundary circle of $\varSigma_{0,k}$ reaches a critical point of index 2. 
\end{itemize}

We can treat cases (1) and (3)--(6) as before, and the case (2) is left to be solved. But the case (2) will give a level $z$ with $(F|\bd M)^{-1}(z)$ being a union of $k+1$ circles which bound a disk with $k$ holes on $\bd M$ and on the level plane.
Since the number of circles on the level set $(F|\bd M)^{-1}(z)$ is bounded, 
this procedure does not continue endlessly. Thus we will have cases (1), (3)--(6), and either we find a point which cannot be seen or it is posssible to eliminate critical points.
\hfill \qed\\

\begin{remark}
The minNCP hypothesis is used only in the cases (3) and (6).
\end{remark}

For a bounded domain $M$ with a Morse height function $F:M\lra \RR$,
there are cases where we can find  a point on the boundary $\bd M$ which is not visible from the infinity. In particular for the knot complements of $k$-bridge knots we have the following proposition. Thus we have a proof that the knot complements of Figure \ref{fig:mathsoc_trefoil} should have  a point on the boundary $\bd M$ which is not visible from the infinity.

\begin{corollary}\label{cor:k-bridge}
If $M$ is a $k$-bridge knot complement and a orthogonal projection $F:M\lra \RR$ is a Morse height function with $4k$ critical points, then there is a point on the boundary $\bd M$ which is not visible from the infinity. 
\end{corollary}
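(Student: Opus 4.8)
The plan is to derive this as an unconditional consequence of the argument proving Proposition~\ref{prop:not_handlebody_not_visible}, the point being that the $4k$-critical-point assumption supplies for free exactly what the minNCP hypothesis was used for there. First I would reduce to the nontrivial case: a $1$-bridge knot is the unknot, whose complement is a standard solid torus and is visible, so we may assume $k\geq 2$. Then $M$ is a nontrivial knot exterior, and in particular not an embedded handlebody, since a handlebody with torus boundary is a solid torus and only the unknot has a solid-torus exterior. By Proposition~\ref{prop:k-bridge} we have $\min\#CP([M])=4k$, so the hypothesis that the orthogonal projection $F$ has exactly $4k$ critical points says precisely that $F$ realizes the minimum number of critical points in the isotopy class of $M\subset\RR^3$.

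Now suppose, for contradiction, that every point of $\bd M$ is visible. Since $M$ is not an embedded handlebody, Theorem~\ref{th:without_concave_critial_points} forces $F|\bd M$ to have a concave critical point of index $0$. Starting from such a point I would run the basin analysis of the proof of Proposition~\ref{prop:not_handlebody_not_visible} verbatim: follow the family of level circles upward, classify the first singularity met, and note that after each occurrence of case (2) the surface carrying the basin becomes an annulus, then a pair of pants, and in general a disk with $j$ holes, for which the analogous list of cases applies. At each step one of the following happens: a critical point of $F|\bd M$ is exhibited whose horizontal tangent plane hides it from infinity, contradicting visibility; or a $2$-sphere component of $\bd M$ is produced, impossible since $\bd M$ is a torus; or an isotopy cancelling a pair of critical points is produced, impossible since $F$ already has only $4k=\min\#CP([M])$ critical points; or --- case (2) --- the surface carrying the basin is replaced by one with one more boundary circle and the process continues upward. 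Since the number of boundary circles of a level surface of $F$ is bounded by the number of critical points, case (2) cannot recur indefinitely, so one of the terminating alternatives must occur, and each is a contradiction. Hence $\bd M$ has a point that is not visible from the infinity.

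It is worth making explicit where the minNCP hypothesis drops out: in the proof of Proposition~\ref{prop:not_handlebody_not_visible} it was invoked only to guarantee the existence of a visible representative carrying a Morse height function with the minimum number of critical points, and, as recorded in the Remark there, it enters only in cases (3) and (6); here such a function is handed to us as part of the data, so minimality may be used directly in precisely those cases.

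The main obstacle, I expect, is bookkeeping rather than a new idea: one must check that every step of the case analysis of Proposition~\ref{prop:not_handlebody_not_visible} transfers without change once ``visible with the minimum number of critical points'' is replaced by ``visible, together with a fixed Morse height function $F$ having the minimal number $4k$ of critical points'' --- in particular that the cancellations in cases (3) and (6) use no isotopy freedom beyond what visibility already provides. The only additional, routine, input is that a $k$-bridge knot exterior with $k\geq 2$ is genuinely not an embedded handlebody.
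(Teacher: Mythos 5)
Your proposal is correct and is essentially the paper's own (much terser) argument: since Proposition \ref{prop:k-bridge} gives $\min\# C([M])=4k$, the given $F$ already realizes the minimum, so minimality substitutes for the minNCP hypothesis exactly where the Remark says it is needed (cases (3) and (6)), and the remaining cases of the basin analysis of Proposition \ref{prop:not_handlebody_not_visible} force an invisible boundary point. Your explicit preliminary step --- restricting to $k\geq 2$ so that the knot exterior is not an embedded handlebody and Theorem \ref{th:without_concave_critial_points} supplies the concave index-$0$ critical point that starts the analysis --- is left implicit in the paper but is the right thing to record.
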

\begin{proof}
As is written in Proposition \ref{prop:k-bridge}, $\min\#C([M])=4k$ and then the cases (3) and (6) would not appear. Hence there is a point on the boundary $\bd M$ which is not visible from the infinity.
\end{proof}

\section*{acknowledgement}
The author acknowledges very much for the helpful discussions in the 
Unsolved Problems Workshop 2024 of Japan Science and Technology Agency held in LecTore Hayama Shonan Kokusai Mura, September 21 -- 23, 2024,  
Unsolved Problems Workshop 2025 of Japan Science and Technology Agency held in Link Forest, Tama Center, September 13 -- 15, 2025, and in Topology and Geometry of Low-Dimensional Manifolds 2025, held in College Plaza Akita, Lecture Hall, October 29 --  November 1, 2025.
.
He also thanks Masaaki Suzuki who introduced him the results on spacial graphs and handlebody knots, and Osamu Saeki who showed him the results obtained by Ken-ichi Iwamoto and related results.

\end{document}